\let\ORIlabel\label
\let\ORIrefstepcounter\refstepcounter
\AddToHook{package/hyperref/before}{%
  \let\label\ORIlabel
  \let\refstepcounter\ORIrefstepcounter
}

\documentclass[final,onefignum,onetabnum]{siamart171218} 

\usepackage{amsmath,amssymb,amsfonts,mathtools,bm}
\usepackage{graphicx}
\usepackage{booktabs}
\usepackage{algorithm}
\usepackage[noend]{algpseudocode}
\usepackage{listings}

\newcommand{\R}{\mathbb{R}}
\newcommand{\X}{\mathcal{X}}
\newcommand{\Y}{\mathcal{Y}}
\newcommand{\Xhat}{\widehat{\mathcal{X}}}
\newcommand{\G}{\mathcal{G}}
\newcommand{\A}{\mathbf{A}}
\newcommand{\od}{\odot}
\newcommand{\eps}{\varepsilon}

\renewcommand{\emph}[1]{#1}

\newsiamthm{assumption}{Assumption}
\newsiamremark{remark}{Remark}

\headers{V. Leplat}{Joint MM for Nonnegative CP and Tucker}

\title{Joint Majorization-Minimization for Nonnegative CP and Tucker Decompositions under \texorpdfstring{$\beta$}{beta}-Divergences: Unfolding-Free Updates}
\author{Valentin Leplat}

\ifpdf
\hypersetup{
  pdftitle={Joint Majorization-Minimization for Nonnegative CP and Tucker Decompositions under beta-Divergences: Unfolding-Free Updates},
  pdfauthor={Valentin Leplat}
}
\fi

\begin{document}

\maketitle
\begin{abstract}
We study majorization-minimization methods for nonnegative tensor decompositions under the $\beta$-divergence family, focusing on nonnegative CP and Tucker models. Our aim is to avoid explicit mode unfoldings and large auxiliary matrices by deriving separable surrogates whose multiplicative updates can be implemented using only tensor contractions (einsum-style operations). We present both classical block-MM updates in contraction-only form and a joint majorization strategy, inspired by joint MM for matrix $\beta$-NMF, that reuses cached reference quantities across inexpensive inner updates. We prove tightness of the proposed majorizers, establish monotonic decrease of the objective, and show convergence of the sequence of objective values. For block-MM, we discuss how BSUM theory applies to the analysis of stationary accumulation points. For J-CoMM, we further establish, under a set of standard regularity assumptions and for one inner sweep per outer iteration, convergence of the iterates to a critical point through a KL-based analysis. Finally, experiments on synthetic tensors and the Uber spatiotemporal count tensor demonstrate substantial speedups over unfolding-based baselines and competitive runtime relative to a recent einsum-factorization framework.
\end{abstract}

\begin{keywords}
nonnegative tensor decomposition, CP decomposition, Tucker decomposition, $\beta$-divergence, majorization-minimization, multiplicative updates, unfolding-free algorithms, tensor contractions, einsum
\end{keywords}

\begin{AMS}
15A69, 65K10, 90C26
\end{AMS}

\section{Introduction}

Nonnegative matrix factorization (NMF) is widely used to learn interpretable representations from nonnegative data~\cite{leeseung1999}.
Among the most effective approaches for computing an NMF of a nonnegative input matrix are block-coordinate majorization-minimization (MM) schemes, particularly for nonquadratic discrepancy measures such as the $\beta$-divergence family.
These methods produce multiplicative updates with monotonic descent and have become a standard tool in the area.

For tensor data, nonnegative CP and Tucker decompositions are natural generalizations.
However, many optimization methods rely on mode unfoldings, Khatri-Rao/Kronecker products, and large intermediate matrices, which can be costly to form and move in memory.
This motivates \emph{unfolding-free} update rules that operate directly on tensors through contractions.

We develop MM algorithms for nonnegative CP and Tucker decompositions under entry-wise $\beta$-divergences
($\beta\in[0,2)$) in an \emph{unfolding-free} form: all update numerators and denominators are written as tensor
contractions and can be implemented directly with einsum-style primitives, without explicit matricizations.
Beyond expressing classical block surrogates in contraction form, we also design a \emph{joint} majorizer tailored to
multilinear tensor models, so that several inexpensive inner block updates can be performed while reusing cached
reference quantities, rather than rebuilding a matrix-type surrogate after unfolding.

\paragraph{Contributions.}
\begin{itemize}
\item \textbf{Unfolding-free block-MM updates for CP and Tucker.}
We derive the classical MM multiplicative updates for CP and Tucker in a \emph{contraction-only} form, i.e., with
numerators/denominators written as explicit tensor contractions rather than unfolding-based kernels, and we provide
einsum recipes that avoid large auxiliary matrices.

\item \textbf{Joint majorization with cheap inner updates.}
Inspired by joint MM for matrix $\beta$-NMF, we construct a \emph{single} surrogate at a reference iterate and decrease
it through a short inner loop of inexpensive multiplicative block updates.
The key mechanism is to reuse cached reference-powered quantities required by the majorizer, reducing repeated
recomputation and memory traffic in large-scale CP/Tucker.

\item \textbf{Descent, objective-value convergence, and iterate convergence under assumptions.}
We prove tightness of the proposed majorizers and establish monotonic decrease
(per block for block-MM, and per outer iteration for joint-MM).
Consequently, the sequence of objective values converges.
For block-MM, we discuss how BSUM theory can be used to analyze stationary accumulation points under standard
regularity assumptions.
For J-CoMM, we further prove, for one inner sweep per outer iteration and under a set of standard compactness,
smoothness, and KL assumptions, convergence of the iterates to a critical point.

\item \textbf{Implementation and benchmarking.}
We describe practical dense/sparse contraction routines and an experimental protocol.
Our experiments show that, while per-iteration progress is comparable, \emph{joint majorization} can substantially
reduce wall-clock time by coupling cached reference tensors with contraction-only updates (notably for CP across the
tested $\beta$ values).
\end{itemize}

\paragraph{Paper organization.}
Section~\ref{sec:related_work} reviews related work and positions our contribution.
Section~\ref{sec:preliminaries} introduces notation, the $\beta$-divergence objective, the CP and Tucker models, and
recalls the majorization-minimization (MM) principle used throughout the paper.
Section~\ref{sec:block_mm} derives classical \emph{block} MM multiplicative updates written in a
\emph{contraction-only} form, i.e., without explicit unfoldings.
The proof details for these block majorizers and updates are provided in
Appendix~\ref{app:proof_block_mm}.
Section~\ref{sec:jmm_multilinear} presents our main contribution: a \emph{joint} majorizer built at a reference point
and decreased by inexpensive inner block updates, again using only tensor contractions.
It also contains a KL-based convergence analysis for J-CoMM with one inner sweep per outer iteration, under a set of
standard regularity assumptions.
Additional derivations and the blockwise-separable minimization arguments for the joint majorizer are given in
Appendix~\ref{app:proof_joint_mm} (and Section~\ref{app:jmm_scalar} for the scalar/blockwise minimization details).
Section~\ref{sec:algorithms} describes the resulting algorithms and discusses practical implementations of all
required contractions; explicit einsum recipes are collected in Appendix~\ref{app:einsum}.
For Tucker, fully indexed expressions of the quantities used in both block-MM and joint-MM are gathered in
Appendix~\ref{app:tucker_explicit}.
Section~\ref{sec:exp_protocol} reports numerical experiments on synthetic and real datasets.
Finally, Section~\ref{sec:conclusion} concludes and outlines future directions.

\section{Background and Related Work}\label{sec:related_work}

\paragraph{$\beta$-divergences and majorization-minimization}
The $\beta$-divergence family covers several standard data fitting losses, including squared Euclidean loss
($\beta=2$), the Kullback-Leibler (KL) divergence ($\beta=1$), and the Itakura-Saito (IS) divergence ($\beta=0$).
In nonnegative matrix factorization (NMF), these losses are often minimized with \emph{multiplicative updates} (MU).
The MU philosophy goes back to the early NMF literature and is popular because it is simple, fast, and preserves
nonnegativity by construction \cite{leeseung1999,leeseung2000}.
A key step was the MM derivation of MU for the $\beta$-divergence by F\'evotte and Idier \cite{fevotteidier2011},
which explains MU through explicit tight majorizers and provides monotonic decrease of the objective.
For KL-NMF, several algorithmic variants and practical improvements have been studied in detail
\cite{hiengillis2021kl,gillis2020nmf}.
More recently, BMMe adds a lightweight extrapolation mechanism on top of MM updates, with convergence guarantees
established in the matrix case (notably for $\beta\in[1,2]$) and strong empirical speedups \cite{bmme}.
Another recent line of work revisits MM through \emph{second-order} majorants: SOM/mSOM builds quadratic surrogate
functions based on Hessian bounds and discusses how monotonicity can be restored when global smoothness assumptions
fail near zero \cite{phamcohenchonavel2025som}.
Finally, beyond block-wise MM, joint-MM strategies have been developed for matrix $\beta$-NMF to reduce the cost of
rebuilding surrogates at every block update \cite{marmin2023joint}.

\paragraph{Tensor decompositions under divergence losses}
Nonnegative CP and Tucker decompositions are standard multilinear models, and many practical solvers follow the same
pattern as in the matrix case: they rely on block updates, and each block update reduces to a sequence of tensor
kernels (often implemented via unfoldings and MTTKRP-like operations).
Divergence losses have also been used in this multilinear setting.
Early work proposed nonnegative tensor factorization updates based on $\alpha$- and $\beta$-divergences
\cite{cichocki2007alphabeta}, and probabilistic/Bregman viewpoints were developed for tensor factorization models
\cite{yilmazcemgil2010,yilmaz2011gctf}.
In practice, unfolding-based computations can become expensive at scale because they may increase memory traffic and
materialize large intermediate arrays.
This motivates implementations that stay closer to the multilinear structure and use direct tensor contractions.

\paragraph{Einsum-based multiplicative updates beyond CP/Tucker}
A recent preprint by Hood and Schein \cite{hoodschein2026} proposes \emph{nonnegative einsum factorization}:
a user specifies a multilinear nonnegative model as an einsum string, and the method fits it with multiplicative
updates under a broad family of losses.
Their work highlights two points that strongly align with our motivation: tensor models can be implemented without
explicit unfoldings by relying on contraction primitives, and MM provides a clean route to monotonic descent and
convergence guarantees.

Our focus is more specialized. We study the nonnegative CP and Tucker models under the $\beta$-divergence and derive
updates in a form that is maximally explicit for these two canonical decompositions.
In particular, we provide contraction-only formulas tailored to CP/Tucker, and we introduce a \emph{joint}
majorization strategy that reuses reference-powered tensors across several inexpensive inner updates.

\paragraph{Joint MM for $\beta$-NMF and extension to multilinear models}
Marmin, de Morais Goulart, and F\'evotte \cite{marmin2023joint} introduced a joint majorization-minimization strategy
for matrix $\beta$-NMF.
Instead of rebuilding a surrogate for one block at a time, their approach constructs a single auxiliary function at a
reference iterate and decreases it via a small number of cheap inner updates, while keeping expensive reference
quantities fixed.
In this work, we adapt this joint-MM mechanism to nonnegative CP and Tucker decompositions.
The key technical requirement is to express all surrogate numerators and denominators as tensor contractions, so that
the resulting algorithms can be implemented efficiently with einsum primitives, without explicit unfoldings.

\section{Preliminaries}\label{sec:preliminaries}
We recall here the main definitions and tools required for the rest of the paper.
After setting notation, we introduce the entry-wise $\beta$-divergence that we use to measure the mismatch between a
nonnegative tensor and its reconstruction.
We then define the nonnegative CP and Tucker models, and recall the MM framework, which will be used to derive
both block-wise surrogates (leading to multiplicative updates) and our joint surrogate strategy.
We conclude with the \emph{Einstein summation} (einsum) notation, which will serve as our basic language to
implement all updates via \emph{tensor contractions} without explicit unfoldings.

\subsection{Notation}
Let $\X \in \R_+^{I_1\times\cdots\times I_N}$ be a nonnegative tensor.
We write $i=(i_1,\dots,i_N)$ for a multi-index and $\X_i$ for an entry.
For tensors (or matrices) of the same size, $\od$ and $\oslash$ denote elementwise product and elementwise division,
respectively.
For a tensor $\Y$, $\Y^\alpha$ denotes elementwise power.

\paragraph{Mode-$n$ product}
For a tensor $\Y\in\R^{I_1\times\cdots\times I_N}$ and a matrix $A\in\R^{J\times I_n}$, the mode-$n$ product
$\Y\times_n A\in\R^{I_1\times\cdots\times I_{n-1}\times J\times I_{n+1}\times\cdots\times I_N}$ is defined entrywise by
\[
(\Y\times_n A)_{i_1,\dots,i_{n-1},\,j,\,i_{n+1},\dots,i_N}
=
\sum_{i_n=1}^{I_n} \Y_{i_1,\dots,i_N}\,A_{j\,i_n}.
\]

\paragraph{Model reconstruction}
Given model parameters $\Theta$, we denote the reconstruction by $\Xhat(\Theta)\in\R_+^{I_1\times\cdots\times I_N}$.
When $\Theta$ is clear from context, we write simply $\Xhat$ and $\Xhat_i$ for its entries.

\subsection{\texorpdfstring{$\beta$}{beta}-divergence}
The objective function considered in this work is based on the $\beta$-divergence family, which we use to quantify the mismatch between $\X$ and its reconstruction $\Xhat$.
The loss is defined entrywise: we sum the scalar divergences $d_\beta(\X_i\mid \Xhat_i)$ over all indices, with $\beta$ controlling the discrepancy.
We now give the formal definition of the corresponding scalar $\beta$-divergence and of the resulting tensor objective $D_\beta(\X,\Xhat)$.

\begin{definition}[$\beta$-divergence]
For $x\ge 0$ and $y>0$, define $d_\beta(x\mid y)$ by
\[
d_\beta(x\mid y)=
\begin{cases}
\frac{1}{\beta(\beta-1)}\Big(x^\beta + (\beta-1)y^\beta - \beta x y^{\beta-1}\Big), & \beta\neq 0,1,\\[1mm]
x\log\frac{x}{y}-x+y, & \beta=1,\\[1mm]
\frac{x}{y}-\log\frac{x}{y}-1, & \beta=0.
\end{cases}
\]
For tensors, the objective is
\[
D_\beta(\X,\Xhat)=\sum_{i} d_\beta(\X_i\mid \Xhat_i).
\]
\end{definition}

\begin{remark}[The case $\beta=0$]\label{rem:beta0}
For the Itakura-Saito divergence ($\beta=0$), the quantity $d_0(x\mid y)$ is finite only when $x>0$ and $y>0$.
Accordingly, objective-value convergence statements at $\beta=0$ require either strictly positive data
$\X_i>0$ for all $i$, or the standard practical convention of evaluating the loss on a floored version of the data,
e.g., replacing $\X$ by $\max(\X,\varepsilon_X)$ for a small $\varepsilon_X>0$.
\end{remark}

\begin{assumption}[Positivity safeguard]
\label{ass:eps}
We enforce a small lower bound $\eps>0$ on all \emph{parameters} of the model, i.e.,
all factor matrices and the core tensor satisfy entrywise $\ge \eps$.
Consequently, the reconstructed tensor $\Xhat(\Theta)$ is strictly positive entrywise, so
all quantities of the form $\Xhat^{\beta-1}$ and $\Xhat^{\beta-2}$ are well-defined.
In the numerical implementation, we may additionally apply a small clipping
$\Xhat \leftarrow \max(\Xhat,\eps)$ \emph{only} when evaluating these powers, for numerical stability.
\end{assumption}

\subsection{Models}
We consider two standard constrained tensor decomposition models throughout the paper: the nonnegative canonical polyadic (CP) decomposition and the nonnegative Tucker decomposition. For completeness, we recall their definitions below.

\paragraph{Nonnegative CP (rank $R$)}
Let $\A^{(n)}\in\R_+^{I_n\times R}$ for $n=1,\dots,N$.
\[
\Xhat_{i_1,\dots,i_N}=\sum_{r=1}^R \prod_{n=1}^N \A^{(n)}_{i_n r}.
\]

\paragraph{Nonnegative Tucker}
Let $\G\in\R_+^{J_1\times\cdots\times J_N}$ and $\A^{(n)}\in\R_+^{I_n\times J_n}$.
\[
\Xhat = \G \times_1 \A^{(1)} \times_2 \cdots \times_N \A^{(N)}.
\]

\subsection{Majorization-Minimization Principle}

The updates proposed in this paper are derived within the majorization-minimization (MM) framework.
MM proceeds by constructing, at a current point $\widetilde{\theta}$, a surrogate function
$G(\theta\mid\widetilde{\theta})$ that upper bounds the objective and is tight at $\widetilde{\theta}$.
One then updates $\theta$ by decreasing (or minimizing) this surrogate, which guarantees monotonic decrease
of the original objective.
In Sections~\ref{sec:block_mm} and~\ref{sec:jmm_multilinear}, we use this principle in two ways:
(i) classical block surrogates that yield standard multiplicative updates, and
(ii) a joint surrogate built at a reference point and decreased by a few cheap inner updates.

For completeness, we next state the formal definition of a tight majorizing surrogate, followed by a key
descent proposition showing that MM updates generate a non-increasing sequence of objective function values.

\begin{definition}[Majorization-minimization surrogate]
Given an objective $F(\theta)$, a function $G(\theta\mid \tilde\theta)$ is a tight majorizing surrogate at $\tilde\theta$
if:
\begin{enumerate}
\item $G(\theta\mid \tilde\theta)\ge F(\theta)$ for all $\theta$,
\item $G(\tilde\theta\mid \tilde\theta)=F(\tilde\theta)$.
\end{enumerate}
\end{definition}

\begin{proposition}[Monotonic descent]
\label{prop:mm_descent}
If $\theta^{+}\in\arg\min_\theta G(\theta\mid \tilde\theta)$, then
$F(\theta^{+})\le F(\tilde\theta)$.
\end{proposition}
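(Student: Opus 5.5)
The argument is the standard three-inequality chain for majorization-minimization. It uses only the two defining properties of a tight majorizing surrogate (Definition above) and the optimality of $\theta^{+}$. No structure of the $\beta$-divergence, CP, or Tucker model is needed, so the proof works for an arbitrary objective $F$ and surrogate $G(\cdot\mid\tilde\theta)$.

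\textbf{Step 1 (majorization at the new point).} Apply property~1 of the definition at $\theta=\theta^{+}$. This gives $F(\theta^{+})\le G(\theta^{+}\mid\tilde\theta)$.

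\textbf{Step 2 (optimality).} Since $\theta^{+}\in\arg\min_\theta G(\theta\mid\tilde\theta)$, we have $G(\theta^{+}\mid\tilde\theta)\le G(\theta\mid\tilde\theta)$ for every admissible $\theta$. Taking $\theta=\tilde\theta$ gives $G(\theta^{+}\mid\tilde\theta)\le G(\tilde\theta\mid\tilde\theta)$. The one point to check is that $\tilde\theta$ itself lies in the feasible set over which the minimization is taken. In the paper's setting this is the nonnegative set, or the set $\{\theta\ge\eps\}$ under Assumption~\ref{ass:eps}, and $\tilde\theta$ is a current iterate, so it is feasible.

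\textbf{Step 3 (tightness).} Property~2 gives $G(\tilde\theta\mid\tilde\theta)=F(\tilde\theta)$. Chaining the three relations yields
\[
F(\theta^{+})\le G(\theta^{+}\mid\tilde\theta)\le G(\tilde\theta\mid\tilde\theta)=F(\tilde\theta).
\]

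The only possible obstacle is well-definedness: $F$ and $G$ must be finite at $\tilde\theta$ and $\theta^{+}$ so that the inequalities are meaningful. This holds under Assumption~\ref{ass:eps}, which makes $\Xhat>0$, together with Remark~\ref{rem:beta0} in the case $\beta=0$. I would also remark that the same chain works if $\theta^{+}$ merely satisfies $G(\theta^{+}\mid\tilde\theta)\le G(\tilde\theta\mid\tilde\theta)$. This weaker condition is what the joint MM scheme with inexact inner sweeps will later use.
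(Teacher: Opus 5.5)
Your argument is correct and is the standard chain $F(\theta^{+})\le G(\theta^{+}\mid\tilde\theta)\le G(\tilde\theta\mid\tilde\theta)=F(\tilde\theta)$; the paper states this proposition without a separate proof, but it uses the same chain in the proof of Theorem~\ref{thm:jmm_monotone}. Your closing remark that only $G(\theta^{+}\mid\tilde\theta)\le G(\tilde\theta\mid\tilde\theta)$ is needed is exactly how the paper applies it to J-CoMM, whose inner sweep minimizes the joint surrogate block by block rather than jointly.
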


We will use surrogates that are separable in the entries of one block, which yields closed-form
multiplicative updates.
Later, we introduce a joint surrogate, which is built for all blocks at once but is minimized
by a few simple sub-iterations.

\subsection{Einstein summation (einsum) and contraction-only computations}
\label{subsec:prelim_einsum}
A central goal of this work is to express all numerators and denominators of our multiplicative updates as
\emph{tensor contractions}, so that they can be implemented without explicit matricizations (mode unfoldings).
To make this idea concrete, we briefly recall the Einstein summation notation, commonly exposed in numerical libraries
through the function \texttt{einsum}.

\paragraph{Basic rule}
An einsum expression specifies a product of tensors with explicit indices.
Indices that appear in the inputs but not in the output are summed out (contracted).
Equivalently, \texttt{einsum} provides a compact, index-based way to write ``multiply then sum over shared axes''.

\paragraph{Example 1: matrix multiplication}
Let $A\in\R^{I\times K}$ and $B\in\R^{K\times J}$.
The product $C=AB$ is
\[
C_{ij}=\sum_{k=1}^K A_{ik}B_{kj},
\]
which corresponds to the einsum string
\[
\texttt{'ik,kj->ij'}.
\]

\paragraph{Example 2: a CP contraction (third-order illustration)}
Let $\mathcal{T}\in\R^{I\times J\times K}$ and let
$B^{(2)}\in\R^{J\times R}$ and $B^{(3)}\in\R^{K\times R}$.
We define a contraction that produces a matrix $M\in\R^{I\times R}$ by
\begin{equation}
\label{eq:cp_contr_example}
M_{ir}
=\sum_{j=1}^J\sum_{k=1}^K \mathcal{T}_{ijk}\,B^{(2)}_{jr}\,B^{(3)}_{kr},
\qquad i=1,\dots,I,\ \ r=1,\dots,R.
\end{equation}
Three points are worth emphasizing.

\smallskip
\noindent\textbf{(i) Why the output is a matrix.}
In \eqref{eq:cp_contr_example}, the indices $j$ and $k$ are summed out, while $i$ and $r$ are \emph{free} indices.
Therefore the result is indexed by $(i,r)$ and has size $I\times R$.
Importantly, the index $r$ is \emph{not} summed: it labels the CP components and is carried to the output.

\smallskip
\noindent\textbf{(ii) Column-wise interpretation (one contraction per component).}
Let $b^{(2)}_r:=B^{(2)}_{:r}\in\R^{J}$ and $b^{(3)}_r:=B^{(3)}_{:r}\in\R^{K}$ denote the $r$th columns.
Then the $r$th column of $M$ is the vector $m^{(r)}\in\R^{I}$ defined by
\[
m^{(r)}_i=\sum_{j,k}\mathcal{T}_{ijk}\,b^{(2)}_{r}(j)\,b^{(3)}_{r}(k),
\qquad i=1,\dots,I,
\]
that is, $M=[m^{(1)}\ \cdots\ m^{(R)}]$.
This makes clear that we perform the same contraction for each component $r$, and stack the results.

\smallskip
\noindent\textbf{(iii) Relation to $n$-mode products.}
For a fixed $r$, the vector $m^{(r)}$ can be seen as successive mode products with \emph{vectors}:
\[
m^{(r)} \;=\; \mathcal{T}\times_2 (b^{(2)}_r)^\top \times_3 (b^{(3)}_r)^\top \;\in\; \R^{I}.
\]
The einsum form simply performs this computation for all $r$ simultaneously by keeping the component index $r$ in the
output.

\smallskip
\noindent\textbf{Einsum notation.}
Equation \eqref{eq:cp_contr_example} corresponds to
\[
\texttt{'ijk,jr,kr->ir'}.
\]
As a general rule, indices that appear in the inputs but not in the output are summed out (here: $j,k$), while indices
that appear in the output remain free (here: $i,r$).

\begin{remark}[When does a contraction produce an $I\times R\times R$ tensor?]
If we used two distinct component indices, for instance
\[
M_{irs}=\sum_{j,k}\mathcal{T}_{ijk}\,B^{(2)}_{jr}\,B^{(3)}_{ks},
\]
then the output would be indexed by $(i,r,s)$ and would have size $I\times R\times R$; the corresponding einsum is
\[
\texttt{'ijk,jr,ks->irs'}.
\]
In CP contractions we intentionally use the \emph{same} component index $r$ across modes, which yields an
$I\times R$ matrix (e.g., \texttt{'ijk,jr,kr->ir'}).
\end{remark}

The matrix contraction above is the basic computational primitive behind all CP updates in this paper:
it is used to form the MU numerators and denominators directly from the data tensor and the current factors,
without any explicit unfolding.
In particular, for a third-order tensor, the operator introduced later in Section~\ref{sec:algorithms} satisfies
\[
\mathrm{CPContr}^{(1)}(\mathcal{T};B^{(2)},B^{(3)}) \;=\; M,
\]
with entries given by \eqref{eq:cp_contr_example}; analogous expressions hold for modes $2$ and $3$ by permuting
indices.

We will rely on such contractions systematically: in CP, each block update reduces to computing one matrix
$\mathrm{CPContr}^{(n)}(\mathcal{P};\{A^{(m)}\}_{m\neq n})$ and one matrix
$\mathrm{CPContr}^{(n)}(\mathcal{Q};\{A^{(m)}\}_{m\neq n})$, both implemented as einsum calls.

Einsum provides a convenient and efficient abstraction for the contraction-only computations required in this work,
for two main reasons.
First, it allows us to implement CP/Tucker updates \emph{without} forming explicit unfoldings, Khatri-Rao products, or
large intermediate matrices; instead, we contract only along the indices that must be summed out.
Second, when a contraction involves several tensors, the order in which pairwise contractions are executed can have a
large impact on runtime and memory traffic; modern backends (e.g., by selecting optimized contraction paths) can
therefore yield substantial practical gains.
In the sequel, we systematically derive update formulas as contraction expressions, and later translate them into
explicit einsum recipes (Appendix~\ref{app:einsum}) for reproducible implementations.

\section{Separable Block Majorizers for \texorpdfstring{$\beta$}{beta}-Divergences}\label{sec:block_mm}

In this section we recall the standard block majorization-minimization (MM) construction for $\beta$-divergence
objectives and derive the resulting multiplicative updates.
We derive "contraction-only" update formulas, i.e., tensor expressions that compute the MU numerators and
denominators via direct contractions rather than explicit unfoldings.
These updates are algebraically identical to the classical MU rules.
We first introduce the two tensors $\mathcal P$ and $\mathcal Q$ that appear systematically in the gradients and
majorizers, and then present the CP and Tucker block updates.

\subsection{Weights and the multiplicative exponent}
The $\beta$-divergence gradients can be expressed using two entrywise ``weights'' built from the current reconstruction.
These tensors appear systematically in the numerators and denominators of the MU rules below.
Given a current reconstruction $\Xhat$, define
\[
\mathcal{P} := \X \od \Xhat^{\beta-2},
\qquad
\mathcal{Q} := \Xhat^{\beta-1},
\]
with elementwise powers, under the positivity safeguard.

\begin{remark}[Exponent $\gamma(\beta)$]
\label{rem:gamma}
For $\beta<2$, the MM scalar subproblems obtained from the surrogate have closed-form minimizers.
Solving the first-order optimality condition yields an update of the form
\[
U \leftarrow \widetilde U \odot \left(\frac{\mathrm{Num}}{\mathrm{Den}}\right)^{\gamma(\beta)},
\qquad
\gamma(\beta)=
\begin{cases}
\frac{1}{2-\beta}, & 0\le \beta<1,\\[1mm]
1, & 1\le \beta <2,
\end{cases}
\]
which matches the classical $\beta$-divergence MM derivations (see, e.g.,~\cite{fevotteidier2011}),
with the limit cases $\beta=0$ and $\beta=1$ understood by continuity.
In what follows we keep the notation $\gamma(\beta)$ and focus on $\beta\in[0,2)$.
\end{remark}

\begin{remark}[$\varepsilon$-constrained updates]
Throughout the paper, all block subproblems are understood on the
$\varepsilon$-constrained feasible set prescribed by Assumption~\ref{ass:eps}.
Hence, the exact block minimizer is obtained entrywise by the usual multiplicative
candidate followed by the lower bound $\varepsilon$, i.e.,
\[
U^{+}=\max\!\left(U_{\mathrm{cand}},\varepsilon\right)
\]
(entrywise).
When the unconstrained multiplicative candidate already satisfies
$U_{\mathrm{cand}}\ge \varepsilon$, the clipping is inactive and one recovers the
standard multiplicative update.
\end{remark}

With these definitions in place, we now derive separable block surrogates for each model.
The resulting updates take the generic form
\(
\text{block} \leftarrow \text{block} \od (\mathrm{Num}/\mathrm{Den})^{\gamma(\beta)},
\)
where $\mathrm{Num}$ and $\mathrm{Den}$ are obtained by contracting $\mathcal P$ and $\mathcal Q$ with model-specific
partial reconstructions.

\subsection{CP: block update without unfolding}
We first consider the CP model and update one factor matrix at a time while keeping the others fixed.
Because the reconstruction is linear in the active factor, the MM surrogate becomes separable across its entries,
leading to a closed-form multiplicative update.

Fix all factors except $\A^{(n)}$.
For each component $r$, define
\[
s^{(n)}_{r}(i_{-n}) := \prod_{m\neq n} \A^{(m)}_{i_m r}.
\]
Then $\Xhat_i=\sum_{r} \A^{(n)}_{i_n r}\,s^{(n)}_{r}(i_{-n})$ is linear in $\A^{(n)}$.
The next two quantities correspond to the standard MU numerator/denominator, but are written as contractions over the
index set $i_{-n}$ rather than via matricization.

Define numerator and denominator matrices of size $I_n\times R$:
\[
\mathrm{Num}^{(n)}_{i_n r}=\sum_{i_{-n}} \mathcal{P}_{i}\,s^{(n)}_{r}(i_{-n}),
\qquad
\mathrm{Den}^{(n)}_{i_n r}=\sum_{i_{-n}} \mathcal{Q}_{i}\,s^{(n)}_{r}(i_{-n}).
\]

\begin{theorem}[CP block multiplicative update]
\label{thm:block_cp}
Under Assumption~\ref{ass:eps}, the CP block subproblem in $\A^{(n)}$ admits a separable MM surrogate.
Its exact minimizer on the $\varepsilon$-constrained feasible set is obtained entrywise as
\[
\A^{(n)}_{+}
=
\max\!\left(
\A^{(n)} \odot
\left(\frac{\mathrm{Num}^{(n)}}{\mathrm{Den}^{(n)}}\right)^{\gamma(\beta)},
\ \eps
\right),
\]
where the maximum is taken entrywise.
In particular, when the unconstrained multiplicative candidate is already $\ge \eps$ entrywise,
the clipping is inactive and one recovers the standard multiplicative update
\[
\A^{(n)} \leftarrow \A^{(n)} \odot \left(\frac{\mathrm{Num}^{(n)}}{\mathrm{Den}^{(n)}}\right)^{\gamma(\beta)}.
\]
Moreover, each block update yields
\[
D_\beta(\X,\Xhat(\A^{(n)}_{+})) \le D_\beta(\X,\Xhat(\A^{(n)})).
\]
\end{theorem}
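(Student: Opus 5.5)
We follow the Févotte--Idier construction, written in the CP notation. Fix all factors except $\A^{(n)}$ and let $\widetilde A:=\A^{(n)}$ be the current block, which is $\ge\eps$. Put $\widetilde{\Xhat}_i=\sum_r \widetilde A_{i_n r}s^{(n)}_r(i_{-n})$, which is positive by Assumption~\ref{ass:eps}. We use the standard decomposition of $d_\beta(x\mid y)$ as a sum of three terms in $y$:
\begin{itemize}
\item a convex part $\check d(y)$;
\item a concave part $\hat d(y)$;
\item a constant.
\end{itemize}
For $\beta\in[1,2)$, $y^\beta/\beta$ is convex and $-x y^{\beta-1}/(\beta-1)$ is convex, or $-x\log y$ when $\beta=1$, so the concave part is zero. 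For $\beta\in[0,1)$, $\frac{1}{\beta}y^{\beta}$ (respectively $\log y$) is concave and $x y^{\beta-1}/(1-\beta)$ is convex.

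\textbf{Step 1 (majorizing the convex part).} Use Jensen with the weights $\lambda_{ir}=\widetilde A_{i_nr}s^{(n)}_r(i_{-n})/\widetilde{\Xhat}_i$, which sum to one over $r$. Writing $\Xhat_i=\sum_r\lambda_{ir}\cdot \big(\widetilde{\Xhat}_i\,A_{i_nr}/\widetilde A_{i_nr}\big)$ gives
\[
\check d(\Xhat_i)\le\sum_r \lambda_{ir}\,\check d\big(\widetilde{\Xhat}_i A_{i_nr}/\widetilde A_{i_nr}\big).
\]
Equality holds at $A=\widetilde A$. Positivity of all $s^{(n)}_r$ and $\widetilde A$ guarantees the weights are well defined.

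\textbf{Step 2 (majorizing the concave part).} Use the tangent line at $\widetilde{\Xhat}_i$, which is linear in $A$ and tight. Summing over $i$ gives $G(A\mid\widetilde A)\ge D_\beta(\X,\Xhat(A))$ with equality at $\widetilde A$. We then check that $G$ separates as $\sum_{i_n,r} g_{i_nr}(A_{i_nr})$. Collecting terms, each $g_{i_nr}$ has the form $c_1\,\widetilde A\,(a/\widetilde A)^\beta/\beta$ or a similar power term. Its coefficients are exactly
\[
\mathrm{Den}^{(n)}_{i_nr}=\sum_{i_{-n}}\widetilde{\Xhat}_i^{\beta-1}s_r
\quad\text{and}\quad
\mathrm{Num}^{(n)}_{i_nr}=\sum_{i_{-n}}\X_i\widetilde{\Xhat}_i^{\beta-2}s_r,
\]
that is, contractions of $\mathcal Q$ and $\mathcal P$.

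\textbf{Step 3 (scalar minimization).} Each $g_{i_nr}$ has derivative
\[
g'(a)=\mathrm{Den}\,(a/\widetilde A)^{\beta-1}-\mathrm{Num}\,(a/\widetilde A)^{\beta-2}\quad\text{for }\beta\ge1,
\]
and the analogous expression with exponents $(a/\widetilde A)^{1-\ldots}$ for $\beta<1$. The zero is $a^\star=\widetilde A(\mathrm{Num}/\mathrm{Den})^{\gamma(\beta)}$, consistent with Remark~\ref{rem:gamma}. We need $g$ strictly convex on $(0,\infty)$, or at least strictly decreasing before $a^\star$ and increasing after. This holds because $g'$ has a single sign change: $g'<0$ for $a<a^\star$ and $g'>0$ for $a>a^\star$. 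Hence the minimizer over $[\eps,\infty)$ is $\max(a^\star,\eps)$, which gives the stated formula. When $a^\star\ge\eps$ the clipping is inactive.

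\textbf{Step 4 (descent).} The descent inequality then follows from Proposition~\ref{prop:mm_descent}, applied on the constrained set, since $\widetilde A$ is feasible:
\[
F(A_+)\le G(A_+\mid\widetilde A)\le G(\widetilde A\mid\widetilde A)=F(\widetilde A).
\]

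The main obstacle is the careful case analysis in Steps 1--3 across $\beta\in[0,1)$, $\beta=1$, $\beta\in(1,2)$, and the limit $\beta=0$ (log terms). In each case we must verify the convex/concave split, that the resulting separable term has the claimed unimodal shape, and that it yields exponent $\gamma(\beta)$. We will handle $\beta=0,1$ by the logarithmic forms directly rather than by continuity.
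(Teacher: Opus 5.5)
Your proposal is correct and follows essentially the same route as the paper's proof in Appendix~\ref{app:proof_block_mm}. Both arguments use Jensen on the convex part with weights $\widetilde A_{i_nr}s^{(n)}_r(i_{-n})/\widetilde{\Xhat}_i$, a tangent bound on the concave part when $\beta<1$, and entrywise separability with coefficients given by the contractions of $\mathcal P$ and $\mathcal Q$, followed by closed-form scalar minimization clipped at $\eps$ and descent from the MM sandwich $F(A_+)\le G(A_+\mid\widetilde A)\le G(\widetilde A\mid\widetilde A)=F(\widetilde A)$. Your direct treatment of $\beta=0,1$ through the logarithmic forms is slightly more careful than the paper's appeal to continuity, as is your unimodality argument for the clipping, which also covers $\mathrm{Num}=0$ (where $a^\star=0$ and the minimizer is $\eps$).
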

\begin{proof}
The proof follows the classical MM construction for $\beta$-divergence losses.
One first majorizes each entrywise term of the objective by Jensen's inequality and,
for $\beta<1$, combines this with the standard convex-concave split and a tangent upper bound.
Since the CP model is linear in the active block $\A^{(n)}$ when all other factors are fixed,
the resulting block surrogate is separable across the entries of $\A^{(n)}$.
Minimizing these scalar surrogate terms yields the stated multiplicative update, with the
entrywise lower bound $\eps$ enforced by clipping.
The detailed indexed derivation is given in Appendix~\ref{app:proof_block_mm}.
\end{proof}

\subsection{Tucker: block updates without unfolding}
For Tucker, the variables split into the core tensor and the factor matrices, and we derive a separable surrogate for
each block in turn.
We start with the core update (a multimode contraction with transposed factors) and then present the update for a
single factor matrix.

\paragraph{Core update}
Let $\Xhat=\G\times_1\A^{(1)}\cdots\times_N\A^{(N)}$ and define $\mathcal{P},\mathcal{Q}$ as above.
Define
\[
\mathcal{P}_{\mathrm{core}} := \mathcal{P}\times_1 (\A^{(1)})^\top \cdots \times_N (\A^{(N)})^\top,
\qquad
\mathcal{Q}_{\mathrm{core}} := \mathcal{Q}\times_1 (\A^{(1)})^\top \cdots \times_N (\A^{(N)})^\top.
\]
Then
\[
\G \leftarrow \G \od \left(\frac{\mathcal{P}_{\mathrm{core}}}{\mathcal{Q}_{\mathrm{core}}}\right)^{\gamma(\beta)}.
\]

\paragraph{Factor update}
The factor updates follow the same MU template: once all other blocks are fixed, the reconstruction becomes linear in
$\A^{(n)}$, and the corresponding numerator/denominator are obtained by contracting $\mathcal P$ and $\mathcal Q$ with
the partial tensor $\mathcal B^{(n)}$.

Fix all blocks except $\A^{(n)}$ and define the partial tensor
\[
\mathcal{B}^{(n)} := \G \times_1 \A^{(1)}\cdots\times_{n-1}\A^{(n-1)}
\times_{n+1}\A^{(n+1)}\cdots\times_N\A^{(N)}.
\]
Then
\[
\Xhat_{i} = \sum_{j_n} \A^{(n)}_{i_n j_n}\, \mathcal{B}^{(n)}_{j_n,i_{-n}}.
\]
Define
\[
\mathrm{Num}^{(n)}_{i_n j_n}=\sum_{i_{-n}} \mathcal{P}_{i}\, \mathcal{B}^{(n)}_{j_n,i_{-n}},
\qquad
\mathrm{Den}^{(n)}_{i_n j_n}=\sum_{i_{-n}} \mathcal{Q}_{i}\, \mathcal{B}^{(n)}_{j_n,i_{-n}},
\]
and update
\[
\A^{(n)} \leftarrow \A^{(n)} \od \left(\frac{\mathrm{Num}^{(n)}}{\mathrm{Den}^{(n)}}\right)^{\gamma(\beta)}.
\]

We summarize the resulting monotonicity property in the following theorem; the proof follows the same MM argument as
for CP and is deferred to the appendix.

\begin{theorem}[Tucker block multiplicative updates]
\label{thm:block_tucker}
Under Assumption~\ref{ass:eps}, the Tucker core and factor subproblems admit separable MM surrogates.
Their exact minimizers on the $\varepsilon$-constrained feasible set are obtained entrywise by applying
the usual multiplicative candidates followed by the lower bound $\eps$.
Equivalently,
\[
\G_{+}
=
\max\!\left(
\G \odot \left(\frac{\mathcal{P}_{\mathrm{core}}}{\mathcal{Q}_{\mathrm{core}}}\right)^{\gamma(\beta)},
\ \eps
\right),
\]
and, for each mode $n$,
\[
\A^{(n)}_{+}
=
\max\!\left(
\A^{(n)} \odot \left(\frac{\mathrm{Num}^{(n)}}{\mathrm{Den}^{(n)}}\right)^{\gamma(\beta)},
\ \eps
\right),
\]
where the maxima are taken entrywise.
When the unconstrained multiplicative candidates already satisfy the lower bound,
the clipping is inactive and one recovers the standard multiplicative updates.
Moreover, each such block update decreases the objective $D_\beta(\X,\Xhat)$.
\end{theorem}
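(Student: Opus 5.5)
The plan is to reduce both the core update and each factor update to the same abstract situation used for Theorem~\ref{thm:block_cp}. When all other blocks are fixed, the reconstruction is linear in the active block $U$ with nonnegative coefficients:
\[
\Xhat_i=\sum_k U_k\,W_{k,i}.
\]
\textbf{Core block.} Take $U=\G$, indexed by $k=j=(j_1,\dots,j_N)$, with $W_{j,i}=\prod_n \A^{(n)}_{i_n j_n}$.

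\textbf{Factor block.} Take $U=\A^{(n)}$, indexed by $k=(i_n',j_n)$, with $W_{k,i}=\delta_{i_n i_n'}\,\mathcal{B}^{(n)}_{j_n,i_{-n}}$.

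Under Assumption~\ref{ass:eps} all these coefficients are nonnegative, and $\Xhat$ is strictly positive at every feasible point. The whole proof then rests on one generic lemma for a linear nonnegative model $\Xhat(U)=\sum_k U_k W_{k,\cdot}$.

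The lemma is proved by the classical Févotte--Idier construction.

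\textbf{Splitting the scalar divergence.} Write $d_\beta(x\mid y)=\check d(y)+\hat d(y)+\text{const}$, with $\check d$ convex and $\hat d$ concave. For $\beta\in[1,2)$ the whole $y$-dependence is convex. For $\beta\in[0,1)$ the term $-\frac{x y^{\beta-1}}{\beta(\beta-1)}$ is convex and $\frac{y^\beta}{\beta}$ is concave, with the log-limit at $\beta=0$.

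\textbf{Convex part.} Apply Jensen with weights $\lambda_{k,i}=\widetilde U_k W_{k,i}/\widetilde{\Xhat}_i$:
\[
\check d\Big(\sum_k U_kW_{k,i}\Big)\le\sum_k\lambda_{k,i}\,\check d\big(\widetilde{\Xhat}_i\,U_k/\widetilde U_k\big).
\]

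\textbf{Concave part.} Bound $\hat d$ by its tangent at $\widetilde{\Xhat}_i$.

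\textbf{Properties of the surrogate.} Summing over $i$ gives $G(U\mid\widetilde U)\ge F(U)$, with equality at $U=\widetilde U$. Moreover $G=\sum_k g_k(U_k)$ is separable. Each $g_k$ has the form
\[
g_k(u)=a_k\,\phi(u/\widetilde U_k)+b_k\,\psi(u/\widetilde U_k),
\]
where the coefficients are
\[
a_k=\widetilde U_k\sum_i\mathcal P_iW_{k,i},\qquad b_k=\widetilde U_k\sum_i\mathcal Q_iW_{k,i},
\]
and $\phi,\psi$ are power/log functions. For the two blocks these sums are exactly $\mathcal{P}_{\mathrm{core}}$, $\mathcal{Q}_{\mathrm{core}}$ and $\mathrm{Num}^{(n)}$, $\mathrm{Den}^{(n)}$. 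This is the index computation: contracting $\mathcal{P}$ against $W$ over $i$ yields the stated mode products with transposed factors, respectively the sum over $i_{-n}$ against $\mathcal{B}^{(n)}$.

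\textbf{Minimizing each scalar term.} The next step is to show that each $g_k$ is strictly convex on $(0,\infty)$ with a unique stationary point
\[
u^\star=\widetilde U_k\,(a_k/b_k)^{\gamma(\beta)}.
\]
The exponent $1/(2-\beta)$ comes from the derivative $u^{\beta-1}$ versus $u^{1-\beta}\cdot u^{\beta-1}$ balance, consistent with Remark~\ref{rem:gamma}. Restricting to $[\eps,\infty)$, a one-dimensional strictly convex function is minimized at the projection $\max(u^\star,\eps)$, which gives the clipped formulas.

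\textbf{Descent.} This follows from Proposition~\ref{prop:mm_descent} applied on the $\eps$-feasible set. Since $\widetilde U$ is feasible and tightness holds there, the argument goes through:
\[
F(U_+)\le G(U_+\mid\widetilde U)\le G(\widetilde U\mid\widetilde U)=F(\widetilde U).
\]

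I expect the main obstacle to be the careful case analysis of the scalar minimization. One has to check strict convexity and coercivity of $g_k$ on $(0,\infty)$ uniformly in $\beta\in[0,2)$, including the limiting cases $\beta=0$ and $\beta=1$ (log terms). One also has to ensure $a_k,b_k>0$; this holds by positivity of the data-weighted sums, or, when $a_k=0$, the minimizer is the boundary $\eps$, which the clip already covers. I would handle it by treating the $\beta\in[1,2)$ and $\beta\in[0,1)$ regimes separately and computing $g_k'$ explicitly.
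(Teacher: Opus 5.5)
Your proposal is correct and follows essentially the same route as the paper's proof (Appendix~\ref{app:proof_block_mm}). Both use linearity in the active block, the F\'evotte--Idier Jensen/tangent majorizer with the convex--concave split for $\beta<1$, separable scalar surrogates whose coefficients are exactly $\mathcal{P}_{\mathrm{core}},\mathcal{Q}_{\mathrm{core}}$ and $\mathrm{Num}^{(n)},\mathrm{Den}^{(n)}$, projection of the scalar minimizer onto $[\eps,\infty)$, and the MM sandwich for descent; your Jensen over the entries of the active block gives the same block surrogate as the paper's Jensen over the full multi-index $j$ (with the other blocks fixed, $z_{i,j}/\widetilde\lambda_{i,j}$ depends only on the active entry's ratio), and the only slip is cosmetic: the convex term for $\beta\in[0,1)$ is $\frac{x}{1-\beta}y^{\beta-1}$, not $-\frac{x\,y^{\beta-1}}{\beta(\beta-1)}$.
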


\begin{proof}
The argument is the same as in the CP case.
When all other blocks are fixed, the Tucker model is linear in the active block, whether this
block is the core tensor or one factor matrix.
Applying the standard $\beta$-divergence MM construction therefore yields a separable surrogate
for the active block, and the corresponding scalar minimizers give the stated multiplicative
updates, followed by the entrywise lower bound $\eps$ when needed.
The full indexed derivation and the associated contraction formulas are provided in
Appendix~\ref{app:proof_block_mm}.
\end{proof}

\section{Joint Majorizers for Multilinear Models}
\label{sec:jmm_multilinear}

Block updates are simple and known to be efficient in practice, but they can be slow when each block update requires
costly recomputation of intermediate tensors.
Joint majorization introduces a \emph{single} auxiliary function for all variables at once.
This auxiliary function is built at a reference point, and it is then decreased by a few cheap inner
sub-iterations while keeping the expensive reference tensors fixed.
This is the main mechanism we use to reduce runtime.

A key conceptual point is that the surrogate is constructed \emph{jointly} for all blocks (it upper-bounds the full
objective in all variables), but its separability is \emph{blockwise}:
when all blocks except one are fixed, the surrogate becomes entrywise separable in the active block.
This conditional separability yields closed-form inner multiplicative updates for both CP and Tucker within a single
template.

\subsection{Joint surrogate and inner sub-iterations}
Let $\Theta$ denote all variables of the model.
For CP, $\Theta=\{\A^{(n)}\}_{n=1}^N$.
For Tucker, $\Theta=\{\G,\A^{(1)},\dots,\A^{(N)}\}$.

Fix a reference point $\widetilde{\Theta}$, and write $\widetilde{\Xhat}=\widehat{\X}(\widetilde{\Theta})$.
A joint surrogate is a function $G(\Theta\mid\widetilde{\Theta})$ such that
\[
G(\Theta\mid\widetilde{\Theta}) \ge D_\beta(\X,\widehat{\X}(\Theta)),
\qquad
G(\widetilde{\Theta}\mid\widetilde{\Theta}) = D_\beta(\X,\widehat{\X}(\widetilde{\Theta})).
\]
Since $G(\cdot\mid\widetilde{\Theta})$ majorizes the objective and is tight at $\widetilde{\Theta}$, any decrease of
$G(\cdot\mid\widetilde{\Theta})$ yields a decrease of a valid upper bound on the true objective.
In our scheme, this guarantees objective decrease \emph{across outer iterations} (see
Theorem~\ref{thm:jmm_monotone}); the objective is not necessarily guaranteed to decrease after each inner step.

Each inner block update minimizes $G(\cdot\mid\widetilde\Theta)$ with respect to the active block
(with the other blocks fixed), \emph{in exact arithmetic via the closed-form minimizer} of the corresponding
scalar subproblems, so $G(\cdot\mid\widetilde\Theta)$ decreases monotonically throughout the inner loop;
see Appendix~\ref{app:proof_joint_mm} (Section~\ref{subsec:Inner_Decrease_and_monot}).

\subsection{How the joint surrogate is constructed}
\label{subsec:jmm_construct}
This subsection explains what $G(\Theta\mid\widetilde{\Theta})$ looks like and why it is useful.

\paragraph{Step 1. Write the model as a sum of nonnegative contributions}
For every tensor index $i=(i_1,\dots,i_N)$, we write the model entry as
\[
\widehat{\X}(\Theta)_i = \sum_{\rho\in\mathcal{R}} z_{i,\rho}(\Theta),
\qquad
z_{i,\rho}(\Theta)\ge 0.
\]
For CP we use $\mathcal{R}=\{1,\dots,R\}$ and $z_{i,r}(\Theta)=\prod_{n=1}^N A^{(n)}_{i_n r}$.
For Tucker we use $\mathcal{R}=\{1,\dots,J_1\}\times\cdots\times\{1,\dots,J_N\}$ and
$z_{i,j}(\Theta)=\mathcal{G}_{j_1\dots j_N}\prod_{n=1}^N A^{(n)}_{i_n j_n}$.

\paragraph{Step 2. Define reference weights}
At the reference iterate $\widetilde{\Theta}$, define
\[
\widetilde{\lambda}_{i,\rho}
:=\frac{z_{i,\rho}(\widetilde{\Theta})}{\widetilde{\Xhat}_i},
\qquad
\sum_{\rho\in\mathcal{R}}\widetilde{\lambda}_{i,\rho}=1,
\qquad
\widetilde{\Xhat}_i>0.
\]
These weights depend only on the reference, so they can be reused during the inner loop.
Under Assumption~\ref{ass:eps}, all contributions $z_{i,\rho}(\widetilde{\Theta})$ are strictly positive,
hence $\widetilde{\lambda}_{i,\rho}>0$ and the ratios $z_{i,\rho}(\Theta)/\widetilde{\lambda}_{i,\rho}$ are well-defined.

\paragraph{Step 3. Use Jensen and tangency}
The main difficulty is the term $d_\beta(\X_i\mid \widehat{\X}(\Theta)_i)$, where the second argument is a sum.
When $\beta\in[1,2)$, the map $y\mapsto d_\beta(x\mid y)$ is convex on $y>0$, so we can apply Jensen:
\[
d_\beta\!\Big(\X_i \,\Big|\, \sum_{\rho} z_{i,\rho}(\Theta)\Big)
\le
\sum_{\rho} \widetilde{\lambda}_{i,\rho}\,
 d_\beta\!\Big(\X_i \,\Big|\, \frac{z_{i,\rho}(\Theta)}{\widetilde{\lambda}_{i,\rho}}\Big).
\]
When $\beta\in[0,1)$, the standard approach is to split $d_\beta$ into a convex part and a concave part with
respect to $y$, apply Jensen to the convex part, and upper bound the concave part by its tangent at
$\widetilde{\Xhat}_i$.
The result is again an upper bound that is a sum over $\rho$.

\paragraph{Entrywise joint surrogate and global surrogate}
In both cases, we obtain an entrywise upper bound
\[
d_\beta(\X_i\mid \widehat{\X}(\Theta)_i) \le G_i(\Theta\mid \widetilde{\Theta}),
\qquad
G_i(\widetilde{\Theta}\mid \widetilde{\Theta}) = d_\beta(\X_i\mid \widetilde{\Xhat}_i),
\]
and we define
\[
G(\Theta\mid \widetilde{\Theta}) := \sum_i G_i(\Theta\mid \widetilde{\Theta}).
\]
This $G$ is tight at the reference and is built using weights $\widetilde{\lambda}_{i,\rho}$.

A complete derivation of the joint surrogate (including the case $\beta\in[0,1)$, and the blockwise separability
steps leading to the multiplicative inner updates) is given in Appendix~\ref{app:proof_joint_mm}.

\subsection{Closed-form inner updates that decrease the joint surrogate}
\label{subsec:jmm_inner_updates}
The joint surrogate is built at $\widetilde{\Theta}$, but it is decreased by simple block updates.
The key point is \emph{conditional} separability: with all other blocks fixed,
$G(\Theta\mid \widetilde{\Theta})$ becomes separable in the entries of the selected block, which yields multiplicative updates.

For completeness, Appendix~\ref{app:proof_joint_mm} (Section~\ref{app:jmm_scalar}) makes this statement fully explicit:
for CP (and similarly Tucker), once all other blocks are fixed, the joint surrogate
$G(\cdot\mid\widetilde\Theta)$ decomposes into a sum of independent one-dimensional convex functions over the
entries of the active block. Their unique minimizers yield exactly the inner multiplicative updates stated below;
see Lemma~\ref{lem:jmm_cp_scalar} and Corollary~\ref{cor:jmm_tucker_scalar}.

\paragraph{Reference-powered tensors}
We define
\[
\widetilde{\mathcal{P}} := \X \od \widetilde{\Xhat}^{\beta-2},
\qquad
\widetilde{\mathcal{Q}} := \widetilde{\Xhat}^{\beta-1},
\]
with elementwise powers and the safeguard $\widetilde{\Xhat}_i\ge \eps$.
These tensors are fixed during the inner loop.

\paragraph{Two transforms}
For a nonnegative variable $Z$ with reference $\widetilde{Z}$, define
\[
\chi_{1,\beta}(Z,\widetilde Z)= \widetilde Z^{\,2-\beta}\odot Z^{\,\beta-1},
\qquad
\chi_{2,\beta}(Z,\widetilde Z)=
\begin{cases}
Z, & \beta<1,\\
Z^{\,\beta}\odot \widetilde Z^{-(\beta-1)}, & 1\le \beta<2.
\end{cases}
\]
These are applied entrywise to factor matrices and to the Tucker core. Note that
$\chi_{1,\beta}(Z,Z)=Z$ and $\chi_{2,\beta}(Z,Z)=Z$.

\paragraph{CP inner update}
Fix $\widetilde{\Theta}=\{\widetilde A^{(n)}\}$ and let $\Theta=\{A^{(n)}\}$ be the current inner iterate.
For each mode $n$, define
\[
\mathrm{Num}^{(n)}_{\mathrm{J}}(i_n,r)
=
\sum_{i_{-n}}
\widetilde{\mathcal{P}}_{i}\,
\prod_{m\neq n}\chi_{1,\beta}\!\left(A^{(m)}_{i_m r},\widetilde A^{(m)}_{i_m r}\right),
\]
\[
\mathrm{Den}^{(n)}_{\mathrm{J}}(i_n,r)
=
\sum_{i_{-n}}
\widetilde{\mathcal{Q}}_{i}\,
\prod_{m\neq n}\chi_{2,\beta}\!\left(A^{(m)}_{i_m r},\widetilde A^{(m)}_{i_m r}\right).
\]
Then the inner update for the $n$th CP factor is
\[
A^{(n)} \leftarrow \widetilde A^{(n)} \od \left(\frac{\mathrm{Num}^{(n)}_{\mathrm{J}}}{\mathrm{Den}^{(n)}_{\mathrm{J}}}\right)^{\gamma(\beta)}.
\]
All terms are computed by tensor contractions, without explicit unfoldings.
This update is the unique minimizer of $G(\cdot\mid\widetilde\Theta)$ with respect to the block
$A^{(n)}$ when all other factors are fixed; see Appendix~\ref{app:proof_joint_mm}, Section~\ref{app:jmm_scalar} (Lemma~\ref{lem:jmm_cp_scalar}).

\paragraph{Tucker inner updates}
Fix $\widetilde{\Theta}=\{\widetilde{\G},\widetilde A^{(1)},\dots,\widetilde A^{(N)}\}$ and let
$\Theta=\{\G,A^{(1)},\dots,A^{(N)}\}$ be the current inner iterate.
The core update is
\[
\G \leftarrow \widetilde{\G} \od \left(\frac{\widetilde{\mathcal{P}}_{\mathrm{core,J}}}{\widetilde{\mathcal{Q}}_{\mathrm{core,J}}}\right)^{\gamma(\beta)},
\]
where $\widetilde{\mathcal{P}}_{\mathrm{core,J}}$ and $\widetilde{\mathcal{Q}}_{\mathrm{core,J}}$ are obtained by
$n$-mode contractions using the transformed factors $\chi_{1,\beta}(A^{(n)},\widetilde A^{(n)})$ and
$\chi_{2,\beta}(A^{(n)},\widetilde A^{(n)})$, respectively.
Similarly, for each mode $n$ the factor update has the form
\[
A^{(n)} \leftarrow \widetilde A^{(n)} \od \left(\frac{\mathrm{Num}^{(n)}_{\mathrm{J}}}{\mathrm{Den}^{(n)}_{\mathrm{J}}}\right)^{\gamma(\beta)},
\]
with numerator and denominator computed by contraction-only operations involving
$\widetilde{\mathcal{P}}$, $\widetilde{\mathcal{Q}}$, the current inner blocks, and the reference blocks.
As in the CP case, these updates (for both the core and the factor matrices) are the unique minimizers of
$G(\cdot\mid\widetilde\Theta)$ with respect to the considered block, with all other blocks fixed; see
Appendix~\ref{app:proof_joint_mm}, Section~\ref{app:jmm_scalar} (Corollary~\ref{cor:jmm_tucker_scalar}).

\begin{proposition}[Inner updates decrease the joint surrogate]
\label{prop:inner_decrease}
Under Assumption~\ref{ass:eps}, the inner multiplicative updates described above are the unique minimizers of
$G(\cdot\mid \widetilde{\Theta})$ with respect to the updated block (holding all other blocks fixed).
In particular, each inner update satisfies
\[
G(\Theta^{+}\mid\widetilde{\Theta}) \le G(\Theta\mid\widetilde{\Theta}).
\]
\end{proposition}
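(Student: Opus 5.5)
The plan is to make the joint surrogate $G(\cdot\mid\widetilde\Theta)$ of Section~\ref{subsec:jmm_construct} fully explicit. Then I fix every block except the active one and show that what remains is a sum of independent one-dimensional, strictly convex functions of the entries of the active block. The minimizer of each such function is the stated multiplicative formula, followed by the clipping $\max(\cdot,\eps)$ on the feasible set of Assumption~\ref{ass:eps}. The descent inequality $G(\Theta^{+}\mid\widetilde\Theta)\le G(\Theta\mid\widetilde\Theta)$ is then immediate: $\Theta^{+}$ minimizes $G$ over a set that contains $\Theta$, since all other blocks are unchanged.

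\textbf{Step 1 (explicit surrogate, CP).} I use the split of $d_\beta(x\mid y)$, up to terms constant in $y$, into $\frac{1}{\beta}y^\beta$ and $-\frac{x}{\beta-1}y^{\beta-1}$.
\begin{itemize}
\item For $\beta\in[1,2)$, Jensen's inequality with weights $\widetilde\lambda_{i,r}=z_{i,r}(\widetilde\Theta)/\widetilde\Xhat_i$ gives $G_i=\sum_r\widetilde\lambda_{i,r}\,d_\beta(\X_i\mid z_{i,r}(\Theta)/\widetilde\lambda_{i,r})$. Here $z_{i,r}(\Theta)/\widetilde\lambda_{i,r}=\widetilde\Xhat_i\prod_m (A^{(m)}_{i_mr}/\widetilde A^{(m)}_{i_mr})$.
\item For $\beta\in[0,1)$, I apply Jensen only to the convex term $-\frac{x}{\beta-1}y^{\beta-1}$ and replace the concave term $\frac1\beta y^\beta$ (or $\log y$ when $\beta=0$) by its tangent at $\widetilde\Xhat_i$. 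That tangent is linear in $y=\sum_r z_{i,r}(\Theta)$.
\end{itemize}
Tightness at $\widetilde\Theta$ holds because the Jensen bound is an equality when all ratios $z_{i,r}/\widetilde\lambda_{i,r}$ equal $\widetilde\Xhat_i$, and the tangent bound is an equality at the point of tangency.

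\textbf{Step 2 (conditional separability).} Fix all $A^{(m)}$ with $m\neq n$ and set $u_{i_nr}=A^{(n)}_{i_nr}/\widetilde A^{(n)}_{i_nr}$. Each entry of $\Xhat$ involves $u$ only through $u_{i_nr}$, multiplied by factors that depend on the other blocks. Collecting coefficients gives, up to constants,
\[
G=\sum_{i_n,r}\phi_{i_nr}(u_{i_nr}),\qquad
\phi(u)=\begin{cases}\frac{a}{\beta}u^\beta-\frac{b}{\beta-1}u^{\beta-1},&1\le\beta<2,\\[1mm] a\,u-\frac{b}{\beta-1}u^{\beta-1},&0\le\beta<1.\end{cases}
\]
The coefficients are $a=\widetilde A^{(n)}_{i_nr}\mathrm{Den}^{(n)}_{\mathrm J}(i_n,r)$ and $b=\widetilde A^{(n)}_{i_nr}\mathrm{Num}^{(n)}_{\mathrm J}(i_n,r)$. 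The key bookkeeping identities behind these coefficients are:
\begin{itemize}
\item $\widetilde\lambda_{i,r}\widetilde\Xhat_i^{\beta}\prod_{m\ne n}(A^{(m)}/\widetilde A^{(m)})^{\beta}=\widetilde{\mathcal Q}_i\,\widetilde A^{(n)}_{i_nr}\prod_{m\neq n}\chi_{2,\beta}$;
\item $\X_i\widetilde\lambda_{i,r}\widetilde\Xhat_i^{\beta-1}\prod_{m\ne n}(A^{(m)}/\widetilde A^{(m)})^{\beta-1}=\widetilde{\mathcal P}_i\,\widetilde A^{(n)}_{i_nr}\prod_{m\ne n}\chi_{1,\beta}$;
\item for $\beta<1$, the analogous linear identity in which $\chi_{2,\beta}=Z$.
\end{itemize}

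\textbf{Step 3 (scalar minimization).} We have $a>0$ by Assumption~\ref{ass:eps} and $b\ge0$. Each $\phi$ is strictly convex on $u>0$ because both of its terms are convex and at least one is strictly convex. Setting $\phi'(u)=0$ gives $u=b/a$ for $\beta\in[1,2)$ and $u=(b/a)^{1/(2-\beta)}$ for $\beta\in[0,1)$. This is exactly $u=(\mathrm{Num}/\mathrm{Den})^{\gamma(\beta)}$, i.e. the stated update. For a strictly convex scalar function, the unique minimizer over $\{A\ge\eps\}$ is the projection $\max(\text{candidate},\eps)$. When $b=0$, $\phi$ is increasing, so the constrained minimizer is $\eps$, consistent with the clipping.

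\textbf{Step 4 (Tucker).} For Tucker, $z_{i,j}=\G_j\prod_n A^{(n)}_{i_nj_n}$ is multilinear, so Steps 1--3 apply verbatim with the active block taken to be either $\G$ or a single $A^{(n)}$. The contractions simply run over $j$ as well. This yields the core and factor formulas.

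The main obstacle is Step 2, namely verifying that the collected coefficients are precisely the $\chi$-transformed contractions in both regimes. Alongside this, the limiting cases $\beta=1$ and $\beta=0$ (the $\log$ terms) must be handled by replacing the power terms with their logarithmic counterparts and checking that the same stationarity equation results. I would do the $\beta\neq0,1$ algebra first and then treat the two log cases separately.
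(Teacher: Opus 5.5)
Your proposal is correct and follows the same route as the paper's proof via Lemma~\ref{lem:jmm_cp_scalar} and Corollary~\ref{cor:jmm_tucker_scalar}. You reparametrize by the ratios to the reference, split the frozen surrogate blockwise into one-dimensional convex functions whose coefficients are the $\chi$-transformed contractions, minimize each in closed form with the $\eps$ lower bound, and obtain descent because the current block is feasible for the block problem. The only cosmetic difference is that your coefficients keep the common positive factor $\widetilde A^{(n)}_{i_nr}$; the paper drops this factor, which does not change the minimizer.
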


\begin{proof}
The result follows from the combination of the global frozen-reference majorizer and the
scalar structure of the inner updates.

Appendix~\ref{app:proof_joint_mm} establishes that $G(\cdot\mid\widetilde\Theta)$ is a valid
majorizer of the objective and is tight at the reference point.
Section~\ref{app:jmm_scalar} of that appendix then shows that, once the reference quantities
are fixed, each block subproblem decomposes into independent scalar surrogate minimizations,
and that the J-CoMM update is the exact minimizer of these scalar terms.
Consequently, each inner block update does not increase the frozen-reference surrogate
$G(\cdot\mid\widetilde\Theta)$.
Combining these blockwise decreases over the inner sweep yields the stated descent property.
\end{proof}

\begin{remark}
The role of the reference point is to make $\widetilde{\mathcal{P}}$ and $\widetilde{\mathcal{Q}}$ fixed during the
inner loop.
This allows us to reuse expensive intermediate tensors across several block updates.
\end{remark}

\subsection{Monotonic decrease of the objective for joint MM}
\begin{theorem}[Monotonic decrease across outer iterations for joint MM]
\label{thm:jmm_monotone}
Let $\widetilde{\Theta}$ be the reference at an outer iteration and initialize the inner loop at
$\Theta^{(0)}=\widetilde{\Theta}$.
Assume the inner loop produces $\Theta^{(L)}$ such that
\[
G(\Theta^{(L)}\mid\widetilde{\Theta}) \le G(\widetilde{\Theta}\mid\widetilde{\Theta}).
\]
Then the objective decreases across the outer iteration:
\[
D_\beta(\X,\widehat{\X}(\Theta^{(L)})) \le D_\beta(\X,\widehat{\X}(\widetilde{\Theta})).
\]
In particular, if the outer update sets $\Theta^{(t+1)}:=\Theta^{(L)}$ with $\widetilde{\Theta}=\Theta^{(t)}$,
then the outer objective sequence is nonincreasing.
\end{theorem}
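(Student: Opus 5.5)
The argument is the standard MM sandwich, using the two defining properties of the joint surrogate $G(\cdot\mid\widetilde\Theta)$ from Section~\ref{subsec:jmm_construct}: global majorization, $G(\Theta\mid\widetilde\Theta)\ge D_\beta(\X,\Xhat(\Theta))$ for every feasible $\Theta$, and tightness at the reference, $G(\widetilde\Theta\mid\widetilde\Theta)=D_\beta(\X,\Xhat(\widetilde\Theta))$.

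With these, the core step is a single chain:
\[
D_\beta(\X,\Xhat(\Theta^{(L)}))\le G(\Theta^{(L)}\mid\widetilde\Theta)\le G(\widetilde\Theta\mid\widetilde\Theta)=D_\beta(\X,\Xhat(\widetilde\Theta)).
\]
The first inequality is majorization at $\Theta^{(L)}$. The second is the hypothesis. The equality is tightness.

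Both surrogate properties come from summing the entrywise bounds $G_i$ over $i$.

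\emph{Majorization.} For $\beta\in[1,2)$, each $G_i$ is Jensen's inequality for the convex map $y\mapsto d_\beta(\X_i\mid y)$. The weights $\widetilde\lambda_{i,\rho}$ are positive and sum to one, which Assumption~\ref{ass:eps} guarantees. For $\beta\in[0,1)$, each $G_i$ combines Jensen on the convex part with the tangent upper bound on the concave part.

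\emph{Tightness.} Substituting $\Theta=\widetilde\Theta$ gives $z_{i,\rho}(\widetilde\Theta)/\widetilde\lambda_{i,\rho}=\widetilde\Xhat_i$ for every $\rho$. Jensen is therefore an equality, and the tangent bound is exact at its contact point.

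The only delicate point is that the inner iterate $\Theta^{(L)}$ must lie in the domain where the majorization was proved. This holds because the inner updates include the $\eps$-clipping, so all blocks stay $\ge\eps$. Hence $\Xhat(\Theta^{(L)})>0$ and every ratio $z_{i,\rho}(\Theta)/\widetilde\lambda_{i,\rho}$ is well defined. For $\beta=0$, I would additionally invoke Remark~\ref{rem:beta0} so that all objective values are finite.

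I also note that the hypothesis $G(\Theta^{(L)}\mid\widetilde\Theta)\le G(\widetilde\Theta\mid\widetilde\Theta)$ is automatically satisfied by the proposed scheme. The inner loop starts at $\Theta^{(0)}=\widetilde\Theta$. By Proposition~\ref{prop:inner_decrease}, each inner block update does not increase $G(\cdot\mid\widetilde\Theta)$. Telescoping over the $L$ inner steps gives the hypothesis.

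For the final claim, set $\widetilde\Theta=\Theta^{(t)}$ and $\Theta^{(t+1)}=\Theta^{(L)}$. The chain above then reads $D_\beta(\X,\Xhat(\Theta^{(t+1)}))\le D_\beta(\X,\Xhat(\Theta^{(t)}))$ for every $t$, so the outer objective sequence is nonincreasing.
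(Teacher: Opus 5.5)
Your proposal is correct and is essentially the paper's proof: the same chain of majorization at $\Theta^{(L)}$, then the hypothesis, then tightness at $\widetilde\Theta$, with the paper simply taking majorization and tightness from the construction in Section~\ref{subsec:jmm_construct}. Your extra checks on the domain (all blocks of $\Theta^{(L)}$ stay $\ge\eps$) and on the hypothesis being supplied by Proposition~\ref{prop:inner_decrease} match what the paper records in Appendix~\ref{app:proof_joint_mm}.
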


\begin{proof}
By majorization, for any $\Theta$,
\[
D_\beta(\X,\widehat{\X}(\Theta)) \le G(\Theta\mid\widetilde{\Theta}).
\]
Therefore,
\[
D_\beta(\X,\widehat{\X}(\Theta^{(L)}))
\le G(\Theta^{(L)}\mid\widetilde{\Theta})
\le G(\widetilde{\Theta}\mid\widetilde{\Theta})
= D_\beta(\X,\widehat{\X}(\widetilde{\Theta})).
\]
\end{proof}

\begin{remark}
The inner loop is guaranteed to decrease the fixed surrogate $G(\cdot\mid\widetilde{\Theta})$.
However, the original objective $D_\beta(\X,\widehat{\X}(\Theta))$ is only guaranteed to decrease
between outer iterates (from $\widetilde{\Theta}$ to $\Theta^{(L)}$), not necessarily after each inner update.
\end{remark}

\subsection{Convergence of objective values}
\begin{remark}[Lower boundedness]
For $x\ge 0$ and $y>0$, the $\beta$-divergence satisfies $d_\beta(x\mid y)\ge 0$, with equality iff $x=y$.
Hence $D_\beta(\X,\Xhat)\ge 0$, so the objective is bounded below on the feasible set.
\end{remark}

\begin{theorem}[Convergence of the objective values]
\label{thm:obj_values_conv}
Consider either block majorization-minimization (Section~\ref{sec:block_mm}) or joint
majorization-minimization (Section~\ref{sec:jmm_multilinear}).
Assume Assumption~\ref{ass:eps} and that the initial objective value is finite
(in particular, for $\beta=0$, see the remark~\ref{rem:beta0}).
Then:
\begin{itemize}
\item for block MM, the objective value decreases after each block update;
\item for joint MM, the objective value decreases after each outer iteration.
\end{itemize}
In both cases, the corresponding sequence of objective values converges to a finite limit.
\end{theorem}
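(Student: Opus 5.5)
The plan is to combine the descent results already established with the lower bound from the preceding remark and apply the monotone convergence theorem for real sequences. The argument has three ingredients: monotonicity, finiteness of the starting value, and a lower bound.

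\textbf{Monotonicity.} For block MM, I will use Theorem~\ref{thm:block_cp} in the CP case and Theorem~\ref{thm:block_tucker} in the Tucker case. Each block update is the exact minimizer of a tight separable block surrogate on the $\eps$-constrained set, so by Proposition~\ref{prop:mm_descent} it satisfies $D_\beta(\X,\Xhat(\Theta^{k+1}))\le D_\beta(\X,\Xhat(\Theta^{k}))$. Here $k$ indexes individual block updates, and the statement ``decreases'' is understood in the nonincreasing sense. For joint MM, I will start the inner loop at the reference, $\Theta^{(0)}=\widetilde\Theta=\Theta^{(t)}$. Proposition~\ref{prop:inner_decrease} then gives $G(\Theta^{(\ell+1)}\mid\widetilde\Theta)\le G(\Theta^{(\ell)}\mid\widetilde\Theta)$ for every inner step, and chaining these inequalities yields $G(\Theta^{(L)}\mid\widetilde\Theta)\le G(\widetilde\Theta\mid\widetilde\Theta)$. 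This is exactly the hypothesis of Theorem~\ref{thm:jmm_monotone}, so the outer sequence $D_\beta(\X,\Xhat(\Theta^{(t)}))$ is nonincreasing.

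\textbf{Finiteness and well-definedness.} I must check that every value in the sequence is a finite real number. Under Assumption~\ref{ass:eps}, all parameters are $\ge\eps$, so $\Xhat(\Theta)>0$ entrywise at every iterate, including after clipping. Each term $d_\beta(\X_i\mid\Xhat_i)$ is therefore well-defined. For $\beta>0$ it is finite because $\X_i\ge0$; the case $\X_i=0$ with $\beta=1$ uses the convention $0\log 0=0$. For $\beta=0$, finiteness requires $\X_i>0$, which is supplied by Remark~\ref{rem:beta0}, either through strictly positive data or through the floored data. Given this, the finite initial value together with monotonicity keeps the whole sequence inside $[0, D_\beta^{(0)}]$.

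\textbf{Lower bound and conclusion.} By the lower-boundedness remark, $D_\beta\ge0$. A nonincreasing real sequence that is bounded below converges to its infimum, which is a finite limit in $[0,D_\beta^{(0)}]$. For block MM, the sequence indexed by single block updates converges. Its subsequence of values at the end of each full sweep then converges to the same limit.

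I expect no real obstacle here. The only delicate point is the joint-MM chaining, where the inner loop must be initialized at the reference and every inner step must be an exact blockwise minimizer, including the $\eps$-clipping. If clipping were applied after the unconstrained candidate, I would need to note that the clipped value is still the constrained minimizer of a one-dimensional convex function. This holds because the unconstrained minimizer lies below $\eps$, so the constrained minimum is attained at $\eps$, and the surrogate decrease therefore survives.
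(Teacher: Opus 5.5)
Your proposal is correct and follows the same route as the paper. Monotonicity comes from Proposition~\ref{prop:mm_descent} for each block update and from Theorem~\ref{thm:jmm_monotone} for each outer joint-MM iteration, and convergence follows because the objective values are nonincreasing and bounded below by $0$. Your extra checks (finiteness of each iterate's objective, chaining Proposition~\ref{prop:inner_decrease} to obtain the hypothesis of Theorem~\ref{thm:jmm_monotone}, and $\eps$-clipping still giving the constrained scalar minimizer) just make explicit what the paper's short proof leaves implicit.
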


\begin{proof}
For block MM, monotonic decrease follows from Proposition~\ref{prop:mm_descent} applied to each block update.
For joint MM, monotonic decrease across outer iterations follows from Theorem~\ref{thm:jmm_monotone}.
In both cases, the objective values form a monotone sequence bounded below, hence they converge.
\end{proof}

\subsection{Convergence of iterates and connection with BSUM}
Convergence of objective values does not by itself imply convergence of iterates.
For \emph{block MM}, the method is naturally related to the BSUM framework~\cite{bsum}:
each block update minimizes a block surrogate that is tight at the current iterate and upper bounds the objective
with respect to the active block.

To invoke a standard BSUM result rigorously, one must verify the usual assumptions for the block surrogate
(exactness, upper-bound property, continuity, and first-order consistency at the current iterate), and ensure that
the iterates remain in a compact set.
For CP and Tucker models, compactness is typically enforced by standard normalization steps that remove the scaling
indeterminacies while preserving the reconstruction.

\begin{remark}[Compactness and scaling indeterminacy]
For CP and Tucker models, scaling transformations can leave $\Xhat$ unchanged.
Accordingly, to obtain compact level sets one typically augments the algorithm with a normalization convention
(e.g., column normalizations with compensating rescaling in another block).
The corresponding stationary-point statements should then be understood for the normalized
$\varepsilon$-constrained formulation.
\end{remark}

\begin{proposition}[Stationary accumulation points for block MM via BSUM]
\label{prop:blockmm_stationary}
Assume that:
(i) the iterates remain in a compact subset of the feasible set
(e.g., after a standard normalization removing scaling indeterminacies),
(ii) the objective is continuous and regular on that set, and
(iii) each block update is the unique minimizer of a tight upper bound for that block
(on the $\varepsilon$-constrained feasible set).
Then standard BSUM theory implies that every accumulation point of the block-MM sequence is a stationary point of the
normalized $\varepsilon$-constrained problem.
Equivalently, the distance from the iterates to the set of stationary points tends to zero.
\end{proposition}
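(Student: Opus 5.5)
The plan is to cast block MM as an instance of the BSUM framework of~\cite{bsum} and verify its hypotheses one by one. The blocks are $\A^{(1)},\dots,\A^{(N)}$ for CP, and $\G,\A^{(1)},\dots,\A^{(N)}$ for Tucker. Each block ranges over the closed convex box $\{Z\ge\eps\}$ intersected with the normalization constraints, and the block surrogate $u_n(\cdot\mid\Theta)$ is the separable majorizer of Theorems~\ref{thm:block_cp} and~\ref{thm:block_tucker}. The feasible set is then a Cartesian product of closed convex blocks, as BSUM requires.

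I would check the four BSUM surrogate assumptions in the following order.

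\textbf{(A1) Tightness.} We need $u_n(\Theta_n\mid\Theta)=D_\beta(\X,\Xhat(\Theta))$. This holds because the Jensen weights $\widetilde\lambda$ reproduce $\Xhat$ exactly at the current point, and the tangent bound for $\beta<1$ is exact there.

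\textbf{(A2) Upper bound.} We need $u_n(Z\mid\Theta)\ge D_\beta(\X,\Xhat(Z,\Theta_{-n}))$. This is the majorization already used in Proposition~\ref{prop:mm_descent}.

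\textbf{(A3) First-order consistency.} We need the directional derivatives of $u_n(\cdot\mid\Theta)$ and $D_\beta$ to agree at $\Theta_n$ in every feasible direction. I would verify this by a direct computation. Differentiating the entrywise surrogate term $\widetilde\lambda_{i,\rho}\, d_\beta(\X_i\mid z_{i,\rho}/\widetilde\lambda_{i,\rho})$ at the reference point gives $d_\beta'(\X_i\mid\Xhat_i)\,\partial z_{i,\rho}$. For $\beta<1$, the concave part is replaced by its tangent, which has the same gradient at $\widetilde\Xhat_i$. Summing over $i$ and $\rho$ yields $\mathrm{Den}^{(n)}-\mathrm{Num}^{(n)}$, which is exactly $\nabla_{\A^{(n)}}D_\beta$.

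\textbf{(A4) Joint continuity.} We need $u_n(Z\mid\Theta)$ to be jointly continuous in $(Z,\Theta)$. On the $\eps$-constrained set, $\Xhat\ge c\,\eps^{N}>0$ (respectively $\eps^{N+1}$ for Tucker) and all weights $\widetilde\lambda_{i,\rho}>0$. The surrogate is therefore a finite composition of continuous functions of strictly positive arguments. At $\beta=0$, Remark~\ref{rem:beta0} keeps all values finite.

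The uniqueness hypothesis (iii) supplies the extra BSUM condition that each block subproblem has a unique minimizer; this is what the cyclic BSUM convergence theorem requires. It also follows from strict convexity of the scalar surrogates for $\beta<2$, after clipping to the box. The objective is continuously differentiable on the set, so the regularity needed in hypothesis (ii) is immediate.

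I would then invoke the BSUM theorem for cyclic updates directly (Theorem~2 of~\cite{bsum}, with quasi-convexity replaced by uniqueness of the block minimizers). It gives that every limit point $\Theta^\ast$ satisfies $\langle\nabla D_\beta(\Theta^\ast),\Theta-\Theta^\ast\rangle\ge0$ for all feasible $\Theta$, i.e., stationarity of the normalized $\eps$-constrained problem.

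For the ``equivalently'' clause, I would argue by contradiction using compactness (i). If $\mathrm{dist}(\Theta^{(t)},\mathcal S)\not\to0$, extract a subsequence staying at distance at least $\delta$ from $\mathcal S$. By compactness it has an accumulation point, which must be stationary by the first part. This contradicts closedness of $\mathcal S$, and $\mathcal S$ is closed because $\nabla D_\beta$ is continuous.

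The main obstacle is reconciling normalization with the BSUM block structure. A normalization step rescales two blocks at once, so it is not literally a block minimization. I would handle this in one of two ways. The first option is to treat normalization as a reparametrization: impose, e.g., unit column sums on $\A^{(n)}$ for $n<N$ as part of the feasible set, and note that the rescaling leaves $\Xhat$ and hence $D_\beta$ unchanged, so descent and tightness are preserved. The second option is to absorb the normalization into the definition of the iteration map and check that BSUM's proof only uses descent, tightness and continuity of the surrogates along convergent subsequences. These properties survive because the normalization map is continuous on the $\eps$-constrained compact set. The proposition's phrasing (``standard BSUM theory implies'') suggests the intended proof is this hypothesis-checking argument rather than a full re-derivation.
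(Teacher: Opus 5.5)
Your proposal is correct and takes the same route as the paper. The paper gives no separate proof of this proposition, only the preceding remark that one must verify exactness, the upper-bound property, continuity and first-order consistency of the block surrogates, ensure compactness, and then invoke BSUM; your check that the surrogate gradient at the current point equals $\mathrm{Den}^{(n)}-\mathrm{Num}^{(n)}=\nabla_{\A^{(n)}}D_\beta$ and your compactness argument for the distance clause fill in details the paper omits.

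Two minor refinements. First, Theorem~2(a) of \cite{bsum} requires quasi-convexity of the block surrogates in addition to uniqueness, rather than replacing one by the other; it holds here because each block surrogate is a sum of convex scalar functions. Second, the normalization obstacle can be bypassed altogether, because on the $\eps$-constrained set the sublevel set of $D_\beta$ is already compact: $d_\beta(x\mid y)\to\infty$ as $y\to\infty$ bounds $\Xhat$, and each parameter entry $Z$ satisfies $\Xhat_i\ge Z\,\eps^{N-1}$ for CP, resp.\ $\Xhat_i\ge Z\,\eps^{N}$ for Tucker, for a suitable index $i$.
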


\begin{remark}[We do not claim iterate convergence for joint MM]
The joint-MM scheme uses a surrogate built at a \emph{fixed} reference $\widetilde\Theta$ and performs several inner
block minimizations before refreshing the surrogate.
At the beginning of the inner sweep, the frozen surrogate $G(\cdot\mid \widetilde\Theta)$ is tight at the current iterate
$\widetilde\Theta=\Theta^k$.
However, after the first block update, the current inner iterate generally differs from $\widetilde\Theta$, while the
surrogate remains frozen at $\widetilde\Theta$.
Hence it is no longer tight at the current inner iterate, so J-CoMM does not directly fit the standard BSUM template.
A sharper iterate-convergence analysis for J-CoMM, based on sufficient decrease, a relative-error estimate, and the
KL property, is developed in Section~\ref{subsec:jcomm_kl}.
\end{remark}

\subsection{Iterate convergence of J-CoMM for one inner sweep: a KL-based analysis}
\label{subsec:jcomm_kl}

The BSUM framework does not directly apply to J-CoMM because the joint surrogate
$G(\cdot\mid\widetilde\Theta)$ is constructed at a reference point and then kept fixed
during the inner sweep; in particular, it is generally not tight at the current inner iterate.
To analyze J-CoMM beyond monotonicity of objective values, we therefore follow a different route,
combining the MM viewpoint with a Kurdyka-\L ojasiewicz (KL) descent framework
\cite{beckpan2018,attouchboltesvaiter2013,boltesabachteboulle2014}.

In this subsection, we restrict attention to the practically most relevant case $L=1$,
that is, one inner sweep per outer iteration.
This setting is also the easiest one theoretically: each outer step is then a finite cyclic sweep of
exact block minimizations of a fixed surrogate built at the previous iterate.

Let
\[
F(\Theta):=D_\beta(\X,\widehat{\X}(\Theta)),
\qquad
\Psi(\Theta):=F(\Theta)+\iota_{\mathcal C}(\Theta),
\]
where $\mathcal C$ is a closed feasible set, and $\iota_{\mathcal C}$ denotes its indicator function.

Throughout this subsection, $\partial$ denotes the limiting (Mordukhovich) subdifferential.
Since $\mathcal C$ is closed and $F$ is continuous on a neighborhood of $\mathcal C$, the function
$\Psi=F+\iota_{\mathcal C}$ is proper and lower semicontinuous.
We call $\Theta^\star$ a critical point of $\Psi$ if
\[
0\in \partial \Psi(\Theta^\star).
\]

The proof proceeds in four steps.
First, we show that the scalar block subproblems defining the J-CoMM updates are uniformly strongly convex
on the considered feasible set.
Second, this yields a sufficient decrease estimate for one outer J-CoMM step.
Third, we prove a relative-error bound showing that the first-order residual at the new iterate is controlled
by the step length.
Finally, combining these two estimates with the KL property yields convergence of the whole sequence to a critical point.

For the sake of compactness, detailed proofs for the new KL-based convergence analysis of J-CoMM are deferred to Appendix~\ref{app:jcomm_kl_proofs}.

We first recall the KL property and then state the assumptions under which the J-CoMM iterate-convergence
analysis is carried out.

\begin{definition}[Kurdyka-\L ojasiewicz property]
A proper lower semicontinuous function $\Phi$ is said to satisfy the Kurdyka-\L ojasiewicz (KL) property at
$\bar x\in \operatorname{dom}\partial \Phi$ if there exist $\eta>0$, a neighborhood $U$ of $\bar x$, and a continuous
concave function $\varphi:[0,\eta)\to\R_+$ such that
\begin{enumerate}
\item $\varphi(0)=0$,
\item $\varphi$ is $C^1$ on $(0,\eta)$,
\item $\varphi'(s)>0$ for all $s\in(0,\eta)$,
\end{enumerate}
and for all $x\in U$ satisfying
\[
\Phi(\bar x)<\Phi(x)<\Phi(\bar x)+\eta,
\]
one has
\[
\varphi'\bigl(\Phi(x)-\Phi(\bar x)\bigr)\,\operatorname{dist}(0,\partial \Phi(x))\ge 1.
\]
We say that $\Phi$ is a KL function if it satisfies the KL property at every point of $\operatorname{dom}\partial\Phi$.
\end{definition}

\begin{assumption}[Standing assumptions for the J-CoMM convergence analysis]
\label{ass:jcomm_conv}
In this subsection, we assume that:
\begin{enumerate}
\item J-CoMM is run with one inner sweep per outer iteration ($L=1$);
\item the feasible set has a block-product structure
      \[
      \mathcal C=\mathcal C_1\times\cdots\times \mathcal C_B,
      \]
      where each $\mathcal C_b$ is closed and convex, and the iterates remain in the compact set $\mathcal C$;
\item there exist constants $0<\eps\le M<\infty$ such that every entry of every admissible block satisfies
      \[
      \eps \le \Theta_\alpha \le M
      \qquad \text{for all } \Theta\in\mathcal C;
      \]
\item the data tensor is strictly positive entrywise on the considered domain:
      \[
      \X_i \ge \delta_X >0 \qquad \text{for all } i;
      \]
\item the objective $F$ is continuously differentiable on an open neighborhood of $\mathcal C$,
      and its gradient is Lipschitz continuous on $\mathcal C$ with constant $L_F$;
\item for every fixed reference point $\widetilde\Theta\in\mathcal C$, the joint surrogate
      $\Theta\mapsto G(\Theta\mid\widetilde\Theta)$ is continuously differentiable on an open neighborhood of
      $\mathcal C$, and its gradient with respect to the first argument is uniformly Lipschitz on $\mathcal C$:
      there exists $L_G>0$ such that
      \[
      \|\nabla_1 G(\Theta\mid\widetilde\Theta)-\nabla_1 G(\Theta'\mid\widetilde\Theta)\|
      \le L_G\|\Theta-\Theta'\|
      \qquad\text{for all }\Theta,\Theta',\widetilde\Theta\in\mathcal C;
      \]
\item the joint surrogate is first-order consistent at the reference point:
      \[
      \nabla_1 G(\widetilde\Theta\mid \widetilde\Theta)=\nabla F(\widetilde\Theta)
      \qquad\text{for all }\widetilde\Theta\in\mathcal C;
      \]
\item the constrained objective $\Psi=F+\iota_{\mathcal C}$ satisfies the KL property on $\mathcal C$.
\end{enumerate}
\end{assumption}

\begin{remark}[On the assumptions of the J-CoMM convergence theorem]
The assumptions above play different roles.
Assumptions on the block structure and compactness of $\mathcal C$ are imposed to remove scaling indeterminacies and
to obtain uniform bounds.
The strict positivity of the data is a technical condition used to ensure that the scalar J-CoMM surrogate coefficients
remain uniformly bounded away from zero.
By contrast, the smoothness assumptions on $F$ and on the joint surrogate are natural for the $\beta$-divergence
family on the positive compact domain induced by the positivity safeguard.
Finally, the KL assumption is standard in nonconvex optimization and is automatically satisfied in many important
settings, notably when the constrained objective is semialgebraic; this covers, in particular, the rational values
of $\beta$ considered in our experiments.
\end{remark}

\begin{lemma}[Uniform curvature of the scalar J-CoMM block surrogates]
\label{lem:jcomm_uniform_curvature}
Assume Assumption~\ref{ass:jcomm_conv}.
Consider one outer J-CoMM step with reference iterate $\widetilde\Theta\in\mathcal C$,
and fix one active scalar variable of a factor matrix or of the Tucker core.
Let
\[
u=\frac{Z}{\widetilde Z}
\]
denote the corresponding ratio variable, where $\widetilde Z$ is the reference value and $Z$ the current value.

Then the associated scalar surrogate subproblem has the form
\[
g(u)=
\begin{cases}
\displaystyle
\frac{\mathrm{Den}}{\beta}\,u^\beta
-\frac{\mathrm{Num}}{\beta-1}\,u^{\beta-1}
+\mathrm{const},
& 1<\beta<2,\\[3mm]
\displaystyle
\mathrm{Den}\,u-\mathrm{Num}\log u+\mathrm{const},
& \beta=1,\\[3mm]
\displaystyle
\mathrm{Den}\,u+\frac{\mathrm{Num}}{1-\beta}\,u^{\beta-1}
+\mathrm{const},
& 0\le\beta<1,
\end{cases}
\]
where $\mathrm{Num}>0$ and $\mathrm{Den}>0$ denote the corresponding J-CoMM contraction coefficients.

Moreover, there exist constants
\[
0<\underline u\le \overline u<\infty,
\qquad
0<\underline N\le \mathrm{Num}\le \overline N<\infty,
\qquad
0<\underline D\le \mathrm{Den}\le \overline D<\infty,
\]
depending only on $\mathcal C$, $\eps$, $M$, $\delta_X$, and $\beta$, such that
\[
u\in[\underline u,\overline u]
\]
for every admissible scalar block update, and there exists a constant $\mu>0$ such that
\[
g''(u)\ge \mu
\qquad \text{for all admissible }u.
\]
In particular, each scalar surrogate is uniformly strongly convex on its admissible interval,
admits a unique minimizer $u^\star$, and satisfies
\[
g(u)-g(u^\star)\ge \frac{\mu}{2}\,|u-u^\star|^2
\qquad\text{for all }u\in[\underline u,\overline u].
\]
\end{lemma}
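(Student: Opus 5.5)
The plan is to derive the scalar form of the block surrogate directly from the Jensen/tangent construction of Section~\ref{subsec:jmm_construct}, and then read all the bounds off the compactness assumptions.

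\textbf{Step 1: the scalar surrogate.} Fix an active entry $Z$ of some block and let $u=Z/\widetilde Z$. Every contribution $z_{i,\rho}(\Theta)$ is multilinear in the blocks, so it depends on $Z$ linearly. Hence the ratio $z_{i,\rho}(\Theta)/\widetilde\lambda_{i,\rho}$ equals $\widetilde{\Xhat}_i\,u\,w_{i,\rho}$, where $w_{i,\rho}$ is a product of ratios of the other (fixed) blocks to their reference values.

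Substitute this into the majorizer.
\begin{itemize}
\item For $\beta\in[1,2)$, Jensen gives terms $d_\beta(\X_i\mid \widetilde{\Xhat}_i u w_{i,\rho})$. Expanding $d_\beta$, the $u$-dependent part is $\frac{1}{\beta}y^\beta-\frac{1}{\beta-1}x y^{\beta-1}$. Summing over $(i,\rho)$ with weights $\widetilde\lambda$, the coefficients collect exactly into the contractions $\mathrm{Den}$ and $\mathrm{Num}$ built with $\chi_{2,\beta},\chi_{1,\beta}$. The powers $w^\beta$ and $w^{\beta-1}$ times the reference entries are precisely these transforms.
\item For $\beta=1$ the same computation produces the $\log$ term.
\item For $\beta<1$, the convex part $-\frac{1}{\beta-1}xy^{\beta-1}$ is handled by Jensen, while the concave part $\frac1\beta y^\beta$ is linearized at $\widetilde{\Xhat}_i$. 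This gives a term linear in $u$, and explains why $\chi_{2,\beta}(Z)=Z$.
\end{itemize}
I will not redo every index; this is the content of Lemma~\ref{lem:jmm_cp_scalar} and Corollary~\ref{cor:jmm_tucker_scalar}, which I can cite. Positivity of $\mathrm{Num}$ and $\mathrm{Den}$ follows from Assumption~\ref{ass:jcomm_conv}(3)--(4): all entries are $\ge\eps$ and the data are $\ge\delta_X$.

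\textbf{Step 2: uniform bounds.} Every block entry lies in $[\eps,M]$. Hence every ratio lies in $[\eps/M,M/\eps]$, every reconstruction entry $\widetilde{\Xhat}_i$ lies in $[c_\eps,C_M]$ (a finite sum of bounded products), and $\X$ is bounded and bounded below by $\delta_X$. $\mathrm{Num}$ and $\mathrm{Den}$ are finite sums of products of such quantities, with real exponents depending only on $\beta$. Each product is therefore pinched between explicit constants, which yields $\underline N,\overline N,\underline D,\overline D$. For $u$ itself, admissibility of both $Z$ and $\widetilde Z$ in $[\eps,M]$ directly gives $u\in[\eps/M,M/\eps]=:[\underline u,\overline u]$.

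\textbf{Step 3: curvature.} Differentiate twice:
\[
g''(u)=
\begin{cases}
(\beta-1)\mathrm{Den}\,u^{\beta-2}+(2-\beta)\mathrm{Num}\,u^{\beta-3}, & 1<\beta<2,\\
\mathrm{Num}\,u^{-2}, & \beta=1,\\
(2-\beta)\mathrm{Num}\,u^{\beta-3}, & 0\le\beta<1.
\end{cases}
\]
In each case $g''(u)\ge(2-\beta)\underline N\,\overline u^{\beta-3}=:\mu>0$, because the $\mathrm{Den}$ term is nonnegative. Strong convexity on the interval then gives a unique minimizer $u^\star$ over the closed convex admissible interval. The quadratic growth bound follows from Taylor's theorem with the first-order optimality condition $g'(u^\star)(u-u^\star)\ge0$.

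\textbf{Expected obstacle.} The delicate point is Step 1's bookkeeping: the coefficients multiplying $u^\beta$ and $u^{\beta-1}$ must genuinely be the J-CoMM contractions with positive sign conventions in each $\beta$-regime, especially the tangent term for $\beta<1$. A second concern is that the admissible interval for $u$ may be intersected with the $\eps$-clipping set. Since this set is still an interval inside $[\underline u,\overline u]$, the strong convexity argument is unaffected.
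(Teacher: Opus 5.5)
Your proposal follows the paper's proof essentially step for step: take the scalar forms from Lemma~\ref{lem:jmm_cp_scalar} and Corollary~\ref{cor:jmm_tucker_scalar}, confine $u$ to $[\eps/M,M/\eps]$, and pinch $\mathrm{Num}$ and $\mathrm{Den}$ between positive constants using the bounds on the blocks, on $\widetilde{\Xhat}$, and $\X\ge\delta_X$; then lower-bound $g''$ in each $\beta$-regime using $u\le\overline u$ and conclude strong convexity and quadratic growth. The differences are minor---you drop the nonnegative $\mathrm{Den}$ term for $1<\beta<2$ and justify quadratic growth through the variational inequality at a possibly boundary minimizer, which is slightly more careful than the paper---and if you actually collect coefficients in Step~1 you will find both carry a common factor $\widetilde Z\in[\eps,M]$, which the paper's scalar form also suppresses; this leaves $u^\star$ unchanged and only rescales $\mu$ by a factor of at least $\eps$.
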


\begin{proof}
The proof consists in writing each scalar J-CoMM subproblem in the ratio variable
$u=Z/\widetilde Z$ and computing its second derivative explicitly in the three regimes
$0\le \beta<1$, $\beta=1$, and $1<\beta<2$.
Under Assumption~\ref{ass:jcomm_conv}, the admissible variables and the coefficients
$\mathrm{Num}$ and $\mathrm{Den}$ remain in compact positive intervals, which yields a
uniform lower bound on $g''(u)$.
The details are given in Appendix~\ref{app:jcomm_kl_proofs},
Section~\ref{app:proof_jcomm_uniform_curvature}.
\end{proof}

\begin{lemma}[Sufficient decrease for one outer J-CoMM step]
\label{lem:jcomm_sufficient_decrease}
Assume Assumption~\ref{ass:jcomm_conv}.
Let $\{\Theta^k\}$ be the sequence generated by J-CoMM with one inner sweep per outer iteration ($L=1$).
Let $B$ denote the number of blocks ($B=N$ for CP and $B=N+1$ for Tucker, counting the core as one block).
For one outer iteration $k$, define the intermediate iterates
\[
\Theta^{k,0}:=\Theta^k,\qquad
\Theta^{k,b}\quad (b=1,\dots,B),
\]
where $\Theta^{k,b}$ denotes the state after updating the first $b$ blocks of the fixed surrogate
$G(\cdot\mid \Theta^k)$, so that
\[
\Theta^{k+1}=\Theta^{k,B}.
\]

Then there exists a constant $c_{\mathrm{dec}}>0$, independent of $k$, such that
\[
F(\Theta^k)-F(\Theta^{k+1})
\;\ge\;
c_{\mathrm{dec}}
\sum_{b=1}^B \|\Theta^{k,b}-\Theta^{k,b-1}\|^2
\;=\;
c_{\mathrm{dec}}\|\Theta^{k+1}-\Theta^k\|^2,
\]
where $\|\cdot\|$ denotes the product Euclidean/Frobenius norm over all blocks.

Since all iterates remain feasible, the same inequality holds with $\Psi$ in place of $F$:
\[
\Psi(\Theta^k)-\Psi(\Theta^{k+1})
\;\ge\;
c_{\mathrm{dec}}\|\Theta^{k+1}-\Theta^k\|^2.
\]
\end{lemma}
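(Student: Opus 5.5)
The argument chains the majorization inequality with blockwise strong convexity of the frozen surrogate. Fix $k$ and write $G_k(\cdot):=G(\cdot\mid\Theta^k)$. By the majorization and tightness properties (Section~\ref{subsec:jmm_construct}),
\[
F(\Theta^{k+1})\le G_k(\Theta^{k+1})=G_k(\Theta^{k,B}),
\qquad
G_k(\Theta^{k,0})=F(\Theta^k).
\]
Telescoping over the inner sweep then gives
\[
F(\Theta^k)-F(\Theta^{k+1})\;\ge\;\sum_{b=1}^B\bigl(G_k(\Theta^{k,b-1})-G_k(\Theta^{k,b})\bigr),
\]
so it suffices to show that each block update yields $G_k(\Theta^{k,b-1})-G_k(\Theta^{k,b})\ge c_{\mathrm{dec}}\|\Theta^{k,b}-\Theta^{k,b-1}\|^2$. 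Since $\Theta^{k,b}$ and $\Theta^{k,b-1}$ differ only in block $b$, summing these squared block increments gives exactly $\|\Theta^{k+1}-\Theta^k\|^2$. This explains the equality in the statement: each block is touched exactly once in a sweep with $L=1$.

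For the per-block estimate, I use the conditional separability of $G_k$ (Lemma~\ref{lem:jmm_cp_scalar}, Corollary~\ref{cor:jmm_tucker_scalar}). With all blocks other than $b$ fixed at their values in $\Theta^{k,b-1}$, $G_k$ restricted to block $b$ is, up to a constant, a sum over the entries $\alpha$ of that block of scalar functions $g_\alpha(u_\alpha)$, where $u_\alpha=Z_\alpha/\widetilde Z_\alpha$. The updated entry is the minimizer $u_\alpha^\star$ over the admissible set; when clipping at $\eps$ is active, the box is convex, and the quadratic growth estimate still holds at a constrained minimizer of a strongly convex function. Lemma~\ref{lem:jcomm_uniform_curvature} then gives
\[
g_\alpha(u_\alpha^{\mathrm{old}})-g_\alpha(u_\alpha^\star)\ge\tfrac{\mu}{2}|u_\alpha^{\mathrm{old}}-u_\alpha^\star|^2.
\]
To convert this into the original variables, I use $|Z^{\mathrm{old}}-Z^{\mathrm{new}}|=\widetilde Z\,|u^{\mathrm{old}}-u^\star|\le M|u^{\mathrm{old}}-u^\star|$, which yields
\[
|u^{\mathrm{old}}-u^\star|^2\ge M^{-2}|Z^{\mathrm{old}}-Z^{\mathrm{new}}|^2 .
\]
Summing over $\alpha$ gives the per-block bound with $c_{\mathrm{dec}}=\mu/(2M^2)$, which is independent of $k$ because $\mu$ depends only on $\mathcal C,\eps,M,\delta_X,\beta$.

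The main obstacle is making sure Lemma~\ref{lem:jcomm_uniform_curvature} applies with a single $\mu$ at every intermediate state $\Theta^{k,b-1}$, and not only at the reference point. The Num/Den coefficients of block $b$ depend on the already-updated blocks through $\chi_{1,\beta},\chi_{2,\beta}$. Its hypotheses must therefore hold uniformly over all other blocks in $\mathcal C$ and all references in $\mathcal C$. This should follow from item 2 of Assumption~\ref{ass:jcomm_conv}: all intermediate iterates stay in the compact box $[\eps,M]$, so both $u^{\mathrm{old}}$ (the ratio of the current entry to its reference, which equals $1$ at $b$'s first update in the sweep) and $u^\star$ lie in $[\underline u,\overline u]$. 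For $\beta\in[0,1)$, where $g''$ mixes $u^{\beta-3}$ with no $\mathrm{Den}$ contribution, I would check that the lower bound $\underline N>0$ together with $u\le\overline u$ suffices.

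The last claim, for $\Psi$, is immediate because $\iota_{\mathcal C}$ vanishes at all feasible iterates.
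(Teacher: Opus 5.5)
Your proposal is correct and follows essentially the same route as the paper: you chain tightness and majorization of the frozen surrogate $G(\cdot\mid\Theta^k)$ with the per-entry quadratic growth from the uniform-curvature lemma, convert ratio increments to original-variable increments via $\widetilde Z\le M$ to get $c_{\mathrm{dec}}=\mu/(2M^2)$, and use the disjoint block supports for the equality. The differences are cosmetic: the paper uses $u_{\mathrm{old}}=1$ directly, since with $L=1$ each block is still at its reference value when it is updated, and it does not separately discuss the clipped (constrained-minimizer) case, which you handle explicitly.
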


\begin{proof}
For each block update within one outer sweep, the uniform curvature established in
Lemma~\ref{lem:jcomm_uniform_curvature} yields a quadratic lower bound on the decrease of the
fixed surrogate $G(\cdot\mid\Theta^k)$.
Summing these blockwise decreases over the sweep and using the majorization relation between
$G(\cdot\mid\Theta^k)$ and $F$ gives the stated sufficient decrease estimate.
The full argument is given in Appendix~\ref{app:jcomm_kl_proofs},
Section~\ref{app:proof_jcomm_sufficient_decrease}.
\end{proof}

\begin{lemma}[Relative-error bound for one outer J-CoMM step]
\label{lem:jcomm_relative_error}
Assume Assumption~\ref{ass:jcomm_conv}.
Let $\{\Theta^k\}$ be the sequence generated by J-CoMM with one inner sweep per outer iteration ($L=1$).
Then there exists a constant $c_{\mathrm{err}}>0$, independent of $k$, such that
\[
\operatorname{dist}\bigl(0,\partial \Psi(\Theta^{k+1})\bigr)
\le
c_{\mathrm{err}}\,\|\Theta^{k+1}-\Theta^k\|.
\]
\end{lemma}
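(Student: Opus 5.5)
We build an explicit element of $\partial\Psi(\Theta^{k+1})$ from the first-order optimality conditions of the $B$ block minimizations in the sweep. Since $\mathcal C=\mathcal C_1\times\cdots\times\mathcal C_B$ with closed convex blocks and $F$ is $C^1$ near $\mathcal C$, the limiting subdifferential splits as
\[
\partial\Psi(\Theta)=\nabla F(\Theta)+N_{\mathcal C}(\Theta)=\prod_{b=1}^B\bigl(\nabla_b F(\Theta)+N_{\mathcal C_b}(\Theta_b)\bigr).
\]
It therefore suffices to exhibit, for each block $b$, a vector $\xi_b\in\nabla_bF(\Theta^{k+1})+N_{\mathcal C_b}(\Theta^{k+1}_b)$ with $\|\xi_b\|\le c\,\|\Theta^{k+1}-\Theta^k\|$.

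\textbf{Step 1 (block optimality).} At inner step $b$, the block $\Theta^{k,b}_b$ minimizes $Z\mapsto G(\Theta^{k,b-1}_{1:b-1},Z,\Theta^{k}_{b+1:B}\mid\Theta^k)$ over $\mathcal C_b$. This is an exact constrained minimizer, by Proposition~\ref{prop:inner_decrease} and the $\varepsilon$-clipped closed form. The subproblem is a convex program, separable and strongly convex by Lemma~\ref{lem:jcomm_uniform_curvature}, so the optimality condition gives
\[
0\in \nabla_b G(\Theta^{k,b}\mid\Theta^k)+N_{\mathcal C_b}(\Theta^{k,b}_b).
\]
Here we use that block $b$ is not changed after step $b$, so $\Theta^{k,b}_b=\Theta^{k+1}_b$. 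Hence $-\nabla_bG(\Theta^{k,b}\mid\Theta^k)\in N_{\mathcal C_b}(\Theta^{k+1}_b)$, and we set
\[
\xi_b:=\nabla_bF(\Theta^{k+1})-\nabla_bG(\Theta^{k,b}\mid\Theta^k).
\]

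\textbf{Step 2 (bounding $\xi_b$).} We insert the reference point and use first-order consistency (Assumption~\ref{ass:jcomm_conv}(7)), $\nabla_bF(\Theta^k)=\nabla_bG(\Theta^k\mid\Theta^k)$:
\[
\|\xi_b\|\le \|\nabla_bF(\Theta^{k+1})-\nabla_bF(\Theta^k)\|+\|\nabla_1G(\Theta^k\mid\Theta^k)-\nabla_1G(\Theta^{k,b}\mid\Theta^k)\|.
\]
The first term is at most $L_F\|\Theta^{k+1}-\Theta^k\|$ by item (5). The second is at most $L_G\|\Theta^{k,b}-\Theta^k\|$ by the uniform Lipschitz bound (6), which requires $\Theta^{k,b}\in\mathcal C$. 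The intermediate point $\Theta^{k,b}$ has its first $b$ blocks taken from $\Theta^{k+1}$ and the rest from $\Theta^k$. It therefore lies in the product set $\mathcal C$, and $\|\Theta^{k,b}-\Theta^k\|\le\|\Theta^{k+1}-\Theta^k\|$.

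\textbf{Step 3 (summing over blocks).} This gives $\|\xi_b\|\le(L_F+L_G)\|\Theta^{k+1}-\Theta^k\|$ for each $b$. Summing over the $B$ blocks yields
\[
\operatorname{dist}(0,\partial\Psi(\Theta^{k+1}))\le\|\xi\|\le\sqrt{B}\,(L_F+L_G)\|\Theta^{k+1}-\Theta^k\|,
\]
so the lemma holds with $c_{\mathrm{err}}=\sqrt B(L_F+L_G)$.

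\textbf{Main obstacle.} The delicate step is Step 1. We must make sure the block optimality is taken with respect to the same set $\mathcal C_b$ that appears in $\Psi$; for instance, the $\varepsilon$-box intersected with any normalization constraint. We must also make sure the closed-form clipped update really is the constrained minimizer on $\mathcal C_b$, and not only on the $\varepsilon$-box. If normalization is applied as a separate rescaling step after the sweep, that step would have to be absorbed: either it leaves $F$ and the relevant gradients unchanged up to a controlled factor, or $\mathcal C_b$ is defined so that the clipped update is already feasible. I would first check that the box $[\varepsilon,M]$ per entry, which is separable, is exactly $\mathcal C_b$. In that case the entrywise KKT conditions of the scalar problems give the normal-cone inclusion directly.
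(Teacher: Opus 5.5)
Your proposal is correct and follows the paper's proof step for step: block-wise first-order optimality yields normal-cone elements at $\Theta^{k+1}_b$, the residual $\xi_b=\nabla_bF(\Theta^{k+1})-\nabla_bG(\Theta^{k,b}\mid\Theta^k)$ is bounded using first-order consistency and the Lipschitz constants $L_F,L_G$, and summing over blocks gives $c_{\mathrm{err}}=\sqrt{B}\,(L_F+L_G)$. Two points you make explicit are left implicit in the paper rather than being gaps in your argument: that the intermediate points $\Theta^{k,b}$ lie in $\mathcal C$, which the Lipschitz bound on $\nabla_1 G$ requires, and that the clipped closed-form update must be the exact minimizer over the same $\mathcal C_b$ that appears in $\Psi$.
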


\begin{proof}
The proof combines the first-order optimality conditions for the exact block minimizations of
the frozen-reference surrogate with the first-order consistency
$\nabla_1 G(\widetilde\Theta\mid\widetilde\Theta)=\nabla F(\widetilde\Theta)$ and the
Lipschitz continuity assumptions on $\nabla F$ and $\nabla_1 G$.
This yields a bound of the subgradient residual at $\Theta^{k+1}$ in terms of the step norm
$\|\Theta^{k+1}-\Theta^k\|$.
See Appendix~\ref{app:jcomm_kl_proofs},
Section~\ref{app:proof_jcomm_relative_error}.
\end{proof}

\begin{proposition}[Asymptotic regularity and critical cluster points]
\label{prop:jcomm_cluster}
Assume Assumption~\ref{ass:jcomm_conv}.
Let $\{\Theta^k\}$ be the sequence generated by J-CoMM with one inner sweep per outer iteration ($L=1$).
Then:

\begin{enumerate}
\item the sequence $\{\Psi(\Theta^k)\}$ is nonincreasing and converges to a finite limit, denoted by $\Psi_\infty$;
\item the increments are square-summable:
\[
\sum_{k=0}^\infty \|\Theta^{k+1}-\Theta^k\|^2 < \infty;
\]
in particular,
\[
\|\Theta^{k+1}-\Theta^k\|\to 0;
\]
\item every cluster point of $\{\Theta^k\}$ is a critical point of $\Psi$.
\end{enumerate}
\end{proposition}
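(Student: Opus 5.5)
The plan is to derive the three items from Lemma~\ref{lem:jcomm_sufficient_decrease} and Lemma~\ref{lem:jcomm_relative_error}, together with the closedness of the limiting subdifferential. This is the standard descent-plus-relative-error argument of \cite{attouchboltesvaiter2013,boltesabachteboulle2014}.

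For item 1, Lemma~\ref{lem:jcomm_sufficient_decrease} gives $\Psi(\Theta^{k+1})\le\Psi(\Theta^k)$, since all iterates lie in $\mathcal C$. By the lower-boundedness remark, $\Psi\ge 0$ on $\mathcal C$. Alternatively, $F$ is continuous on the compact set $\mathcal C$ and hence bounded. A monotone sequence that is bounded below converges, so $\Psi(\Theta^k)\to\Psi_\infty\in\R$.

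For item 2, I will sum the sufficient decrease inequality from $k=0$ to $K$ and telescope:
\[
c_{\mathrm{dec}}\sum_{k=0}^{K}\|\Theta^{k+1}-\Theta^k\|^2\le \Psi(\Theta^0)-\Psi(\Theta^{K+1})\le \Psi(\Theta^0)-\Psi_\infty.
\]
Letting $K\to\infty$ gives square-summability, and therefore the increments tend to zero.

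For item 3, let $\Theta^\star$ be a cluster point, with $\Theta^{k_j}\to\Theta^\star$. Because $\mathcal C$ is compact, $\Theta^\star\in\mathcal C$. By item 2 we also have $\Theta^{k_j+1}\to\Theta^\star$. Lemma~\ref{lem:jcomm_relative_error} supplies $w^{k_j+1}\in\partial\Psi(\Theta^{k_j+1})$ with
\[
\|w^{k_j+1}\|\le c_{\mathrm{err}}\|\Theta^{k_j+1}-\Theta^{k_j}\|\to 0.
\]
The attainment of the distance is justified because $\partial\Psi$ is closed-valued, or one can simply take a near-minimizing element.

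To pass to the limit with the closedness (outer semicontinuity) of the limiting subdifferential, I also need $\Psi(\Theta^{k_j+1})\to\Psi(\Theta^\star)$. This holds because the iterates are feasible, so $\Psi=F$ along the sequence and at $\Theta^\star$, and $F$ is continuous on a neighborhood of $\mathcal C$ by Assumption~\ref{ass:jcomm_conv}(5). Closedness of the graph of $\partial\Psi$ under $\Psi$-attentive convergence then gives $0\in\partial\Psi(\Theta^\star)$.

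A more concrete alternative avoids the abstract closedness argument. Since $\mathcal C$ is convex and $F$ is $C^1$, we have $\partial\Psi=\nabla F+N_{\mathcal C}$. One writes $w^{k}=\nabla F(\Theta^{k})+v^{k}$ with $v^k\in N_{\mathcal C}(\Theta^k)$, then uses continuity of $\nabla F$ and closedness of the normal cone graph for convex sets. As a byproduct, $\Psi(\Theta^\star)=\Psi_\infty$, so $\Psi$ is constant on the cluster set.

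The main obstacle is this last limiting step: one must ensure that the subgradients supplied by Lemma~\ref{lem:jcomm_relative_error} genuinely belong to $\partial\Psi(\Theta^{k+1})$, and that function values converge along the subsequence. I expect to settle both via the convex product structure of $\mathcal C$ and the continuity of $F$ from Assumption~\ref{ass:jcomm_conv}.
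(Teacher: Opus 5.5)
Your proposal is correct and follows essentially the same route as the paper: monotonicity and a lower bound from the sufficient-decrease lemma, telescoping for square-summability, and for cluster points the relative-error lemma along $\Theta^{k_j+1}\to\Theta^\star$ followed by closedness of the limiting subdifferential. Your explicit check that $\Psi(\Theta^{k_j+1})\to\Psi(\Theta^\star)$, which the $\Psi$-attentive closedness of $\partial\Psi$ requires, is a detail the paper's proof of this proposition leaves implicit, and your alternative argument via $\partial\Psi=\nabla F+N_{\mathcal C}$ is also valid.
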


\begin{proof}
The conclusion follows by combining the sufficient decrease estimate of
Lemma~\ref{lem:jcomm_sufficient_decrease} with the relative-error bound of
Lemma~\ref{lem:jcomm_relative_error}.
The former implies monotonicity of $\Psi(\Theta^k)$ and square summability of the increments,
while the latter shows that any cluster point satisfies the criticality condition.
The details are given in Appendix~\ref{app:jcomm_kl_proofs},
Section~\ref{app:proof_jcomm_cluster}.
\end{proof}

\begin{theorem}[Convergence of J-CoMM for one inner sweep]
\label{thm:jcomm_convergence}
Assume Assumption~\ref{ass:jcomm_conv}.
Let $\{\Theta^k\}$ be the sequence generated by J-CoMM with one inner sweep per outer iteration ($L=1$).
Then the sequence has finite length, namely
\[
\sum_{k=0}^\infty \|\Theta^{k+1}-\Theta^k\| < \infty,
\]
and therefore converges to a critical point $\Theta^\star$ of the constrained objective
\[
\Psi(\Theta)=F(\Theta)+\iota_{\mathcal C}(\Theta).
\]
\end{theorem}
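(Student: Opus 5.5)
The plan is the standard Attouch--Bolte--Svaiter / Bolte--Sabach--Teboulle argument, fed by Lemma~\ref{lem:jcomm_sufficient_decrease}, Lemma~\ref{lem:jcomm_relative_error} and Proposition~\ref{prop:jcomm_cluster}.

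\textbf{Step 1: the limit set.} Let $\omega(\Theta^0)$ denote the set of cluster points of $\{\Theta^k\}$. Since the iterates stay in the compact set $\mathcal C$, $\omega(\Theta^0)$ is nonempty and compact. Because $\|\Theta^{k+1}-\Theta^k\|\to 0$ by Proposition~\ref{prop:jcomm_cluster}, it is also connected, and $\operatorname{dist}(\Theta^k,\omega(\Theta^0))\to 0$. By Proposition~\ref{prop:jcomm_cluster} every point of $\omega(\Theta^0)$ is critical. Since $F$ is continuous on $\mathcal C$ and all iterates are feasible, $\Psi$ equals the constant $\Psi_\infty$ on $\omega(\Theta^0)$. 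If $\Psi(\Theta^{k_0})=\Psi_\infty$ for some $k_0$, sufficient decrease forces $\Theta^{k+1}=\Theta^k$ for all $k\ge k_0$, and the claim is trivial. So we may assume $\Psi(\Theta^k)>\Psi_\infty$ for all $k$.

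\textbf{Step 2: uniformized KL.} Item 8 of Assumption~\ref{ass:jcomm_conv} gives the KL property at each point of the compact set $\omega(\Theta^0)$, on which $\Psi$ is constant. Covering this set by finitely many KL neighborhoods gives the uniformized KL lemma of \cite{boltesabachteboulle2014}: there are $\varepsilon_0,\eta>0$ and a single desingularizer $\varphi$ such that
\[
\varphi'\bigl(\Psi(\Theta)-\Psi_\infty\bigr)\operatorname{dist}\bigl(0,\partial\Psi(\Theta)\bigr)\ge 1
\]
whenever $\operatorname{dist}(\Theta,\omega(\Theta^0))<\varepsilon_0$ and $\Psi_\infty<\Psi(\Theta)<\Psi_\infty+\eta$. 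For all $k\ge k_1$ both conditions hold.

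\textbf{Step 3: finite length.} Write $\Delta_k:=\varphi(\Psi(\Theta^k)-\Psi_\infty)-\varphi(\Psi(\Theta^{k+1})-\Psi_\infty)$.
\begin{itemize}
\item Concavity of $\varphi$ gives $\Delta_k\ge\varphi'(\Psi(\Theta^k)-\Psi_\infty)\,(\Psi(\Theta^k)-\Psi(\Theta^{k+1}))$.
\item Lemma~\ref{lem:jcomm_relative_error}, applied with index $k-1$, bounds $\operatorname{dist}(0,\partial\Psi(\Theta^k))$ by $c_{\mathrm{err}}\|\Theta^k-\Theta^{k-1}\|$. With the KL inequality this gives $\varphi'(\Psi(\Theta^k)-\Psi_\infty)\ge 1/(c_{\mathrm{err}}\|\Theta^k-\Theta^{k-1}\|)$.
\item Lemma~\ref{lem:jcomm_sufficient_decrease} gives $\Psi(\Theta^k)-\Psi(\Theta^{k+1})\ge c_{\mathrm{dec}}\|\Theta^{k+1}-\Theta^k\|^2$.
\end{itemize}
Combining the three bounds yields
\[
\|\Theta^{k+1}-\Theta^k\|^2\le C\,\Delta_k\,\|\Theta^k-\Theta^{k-1}\|,\qquad C=c_{\mathrm{err}}/c_{\mathrm{dec}}.
\]
Apply $2\sqrt{ab}\le a+b$ to get
\[
2\|\Theta^{k+1}-\Theta^k\|\le\|\Theta^k-\Theta^{k-1}\|+C\Delta_k.
\]
Sum over $k\ge k_1$. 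The $\Delta_k$ telescope, and $\varphi\ge 0$, so
\[
\sum_{k\ge k_1}\|\Theta^{k+1}-\Theta^k\|\le\|\Theta^{k_1}-\Theta^{k_1-1}\|+C\varphi\bigl(\Psi(\Theta^{k_1})-\Psi_\infty\bigr)<\infty.
\]
Hence $\{\Theta^k\}$ is Cauchy and converges to some $\Theta^\star$. This point lies in $\omega(\Theta^0)$ and is therefore critical by Proposition~\ref{prop:jcomm_cluster}.

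\textbf{Main obstacle.} The delicate step is Step 2, uniformizing the KL inequality on $\omega(\Theta^0)$. It requires $\Psi$ to be constant there and the iterates to eventually lie in the KL neighborhood. Both facts rest on the continuity of $F$ on the compact feasible set $\mathcal C$, so $\Psi(\Theta^{k_j})\to\Psi(\bar\Theta)$ along subsequences. I would make this continuity argument explicit before invoking the lemma of \cite{boltesabachteboulle2014}.
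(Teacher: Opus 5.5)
Your proposal is correct and follows the same route as the paper. The paper checks sufficient decrease (Lemma~\ref{lem:jcomm_sufficient_decrease}), the relative-error bound (Lemma~\ref{lem:jcomm_relative_error}) and continuity of $\Psi$ along convergent subsequences, then cites the Attouch--Bolte--Svaiter KL convergence theorem for finite length; your Steps 1--3 unpack that cited theorem through the uniformized KL lemma and the telescoping estimate, and these details are sound, including the degenerate case $\Psi(\Theta^{k_0})=\Psi_\infty$.
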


\begin{proof}
By Assumption~\ref{ass:jcomm_conv}, the sequence $\{\Theta^k\}$ is contained in the compact set $\mathcal C$.
By Proposition~\ref{prop:jcomm_cluster}, the objective values $\Psi(\Theta^k)$ decrease to a finite limit $\Psi_\infty$,
the increments satisfy $\|\Theta^{k+1}-\Theta^k\|\to 0$, and every cluster point of the sequence is a critical point of $\Psi$.

Moreover, since $F$ is continuous on $\mathcal C$ and all iterates are feasible, for any convergent subsequence
$\Theta^{k_j}\to \Theta^\star$ we have
\[
\Psi(\Theta^{k_j})=F(\Theta^{k_j}) \to F(\Theta^\star)=\Psi(\Theta^\star).
\]
Thus the standard assumptions of the KL convergence theorem for descent sequences are satisfied:
the sufficient decrease estimate of Lemma~\ref{lem:jcomm_sufficient_decrease},
the relative-error estimate of Lemma~\ref{lem:jcomm_relative_error},
and the continuity condition along cluster subsequences.
Since $\Psi$ satisfies the KL property on $\mathcal C$, the standard KL convergence theorem
\cite[Theorem~2.9 and the finite-length argument]{attouchboltesvaiter2013}
applies.

Consequently,
\[
\sum_{k=0}^\infty \|\Theta^{k+1}-\Theta^k\| < \infty.
\]
Hence $\{\Theta^k\}$ is a Cauchy sequence, and therefore converges in the ambient finite-dimensional space:
\[
\Theta^k \to \Theta^\star .
\]
Because $\mathcal C$ is closed and $\Theta^k\in\mathcal C$ for all $k$, we have $\Theta^\star\in\mathcal C$.
Since $\Theta^\star$ is the limit of the sequence, it is in particular a cluster point; therefore,
by Proposition~\ref{prop:jcomm_cluster},
\[
0\in \partial \Psi(\Theta^\star).
\]
\end{proof}

\section{Algorithms and Efficient Tensor Contractions}\label{sec:algorithms}
This section has two roles.
First, it describes the algorithms we will test.
Second, it explains how all required quantities can be computed without explicit unfoldings.

\subsection{Contraction operators}
For CP it is convenient to define a contraction operator that maps a tensor and factor matrices to
an $I_n\times R$ matrix.
Given a tensor $\mathcal{T}\in\R^{I_1\times\cdots\times I_N}$ and matrices
$B^{(m)}\in\R^{I_m\times R}$ for $m\neq n$, define
\[
\mathrm{CPContr}^{(n)}(\mathcal{T};\{B^{(m)}\}_{m\neq n}) \in \R^{I_n\times R}
\]
entrywise by
\[
\big[\mathrm{CPContr}^{(n)}(\mathcal{T};\{B^{(m)}\}_{m\neq n})\big]_{i_n r}
=
\sum_{i_{-n}} \mathcal{T}_{i}\prod_{m\neq n} B^{(m)}_{i_m r},
\]
where the summation $\sum_{i_{-n}}$ runs over all indices $(i_1,\dots,i_{n-1},i_{n+1},\dots,i_N)$.
This contraction can be implemented directly using einsum primitives.

For Tucker we use standard mode-$n$ products $\times_n$.
In practice, we implement them as contractions to avoid explicit unfoldings.

\subsection{Block MM baseline}
The block MM baseline (for the CP model) is summarized in Algorithm~\ref{alg:block_cp}, which we call \textit{B-CoMM}
(\textbf{B}lock \textbf{Co}ntraction-only \textbf{M}ajorization-\textbf{M}inimization).
(An analogous contraction-only block MM baseline for Tucker follows the same pattern and is omitted for brevity.)

\begin{algorithm}[ht!]
\caption{\textbf{B-CoMM}: Block MM for CP under $\beta$-divergence, contraction-only}
\label{alg:block_cp}
\begin{algorithmic}[1]
\Require $\X\ge 0$, rank $R$, initial factors $\{\A^{(n)}\ge \eps\}$, $\beta\in[0,2)$
\For{$k=0,1,2,\dots$}
  \For{$n=1$ to $N$}
    \State Compute current reconstruction $\Xhat$ from the CP model
    \State Form powered tensors with the positivity safeguard of Assumption~\ref{ass:eps}:
    $\mathcal{P}\gets \X \od \Xhat^{\beta-2}$,\quad $\mathcal{Q}\gets \Xhat^{\beta-1}$
    \State $\mathrm{Num}^{(n)} \gets \mathrm{CPContr}^{(n)}(\mathcal{P};\{\A^{(m)}\}_{m\neq n})$
    \State $\mathrm{Den}^{(n)} \gets \mathrm{CPContr}^{(n)}(\mathcal{Q};\{\A^{(m)}\}_{m\neq n})$
    \State $\A^{(n)} \gets \A^{(n)} \od \left(\mathrm{Num}^{(n)}\oslash \mathrm{Den}^{(n)}\right)^{\gamma(\beta)}$
    \State Enforce $\A^{(n)}\ge \eps$ entrywise
  \EndFor
\EndFor
\end{algorithmic}
\end{algorithm}

\subsection{How to implement the contractions with einsum}
\label{subsec:einsum_practice}
We now explain concretely what we mean by \emph{einsum-based contractions}.
The idea is to write the desired summation with explicit indices and to let an einsum backend
(e.g., \texttt{numpy.einsum} or \texttt{opt\_einsum}) perform the contraction without forming
any unfolding, Khatri-Rao product, or Kronecker product.

To keep notation readable, we illustrate the pattern on third-order tensors.
Complete recipes, including the joint-MM contractions, are listed in Appendix~\ref{app:einsum}.

\paragraph{CP contraction (third-order example)}
Let $\mathcal{T}\in\R^{I\times J\times K}$ and let $B^{(2)}\in\R^{J\times R}$,
$B^{(3)}\in\R^{K\times R}$.
The CP contraction $\mathrm{CPContr}^{(1)}(\mathcal{T};B^{(2)},B^{(3)})\in\R^{I\times R}$ is
\[
\big[\mathrm{CPContr}^{(1)}(\mathcal{T};B^{(2)},B^{(3)})\big]_{i r}
=
\sum_{j=1}^J\sum_{k=1}^K \mathcal{T}_{i j k}\,B^{(2)}_{j r}\,B^{(3)}_{k r},
\]
implemented in Python as
\texttt{einsum('ijk,jr,kr->ir', T, B2, B3, optimize=True)}.

\paragraph{Tucker reconstruction and core contraction (third-order example)}
Let $\G\in\R^{J_1\times J_2\times J_3}$ and factor matrices
$A^{(1)}\in\R^{I_1\times J_1}$, $A^{(2)}\in\R^{I_2\times J_2}$, $A^{(3)}\in\R^{I_3\times J_3}$.
The Tucker reconstruction
\[
\Xhat_{i_1 i_2 i_3}=\sum_{j_1,j_2,j_3}\G_{j_1 j_2 j_3}\,
A^{(1)}_{i_1 j_1}A^{(2)}_{i_2 j_2}A^{(3)}_{i_3 j_3}
\]
is \texttt{einsum('abc,ia,jb,kc->ijk', G, A1, A2, A3, optimize=True)}.
For $\mathcal{P}\in\R^{I_1\times I_2\times I_3}$, the core contraction
$\mathcal{P}_{\mathrm{core}}=\mathcal{P}\times_1(A^{(1)})^\top\times_2(A^{(2)})^\top\times_3(A^{(3)})^\top$
is \texttt{einsum('ijk,ia,jb,kc->abc', P, A1, A2, A3, optimize=True)}.

Appendix~\ref{app:tucker_explicit} provides explicit indexed formulas for all Tucker numerators and
denominators (block and joint), and Appendix~\ref{app:einsum} translates them into einsum calls.

\subsection{Joint MM algorithm}
The joint method follows an outer/inner structure.
At each outer iteration we build the reference reconstruction and the corresponding reference-powered tensors.
Then, for a small number of inner steps, we update each block while keeping the reference fixed, which enables
reuse of the reference tensors across several updates.

Our joint method is summarized in Algorithm~\ref{alg:joint_general}, which we call \textit{J-CoMM}
(\textbf{J}oint \textbf{Co}ntraction-only \textbf{M}ajorization-\textbf{M}inimization).

\begin{algorithm}[ht!]
\caption{\textbf{J-CoMM}: Joint MM for CP or Tucker under $\beta$-divergence, contraction-only}
\label{alg:joint_general}
\begin{algorithmic}[1]
\Require $\X\ge 0$, model parameters $\Theta^{(0)}\ge \eps$, $\beta\in[0,2)$, inner steps $L\ge 1$
\For{$t=0,1,2,\dots$}
  \State Set reference $\widetilde{\Theta}\gets \Theta^{(t)}$
  \State Compute reference reconstruction $\widetilde{\Xhat}\gets \widehat{\X}(\widetilde{\Theta})$
  \State Form reference-powered tensors with the positivity safeguard of Assumption~\ref{ass:eps}:
  $\widetilde{\mathcal{P}}\gets \X \od \widetilde{\Xhat}^{\beta-2}$,\quad
  $\widetilde{\mathcal{Q}}\gets \widetilde{\Xhat}^{\beta-1}$
  \State Initialize inner iterate $\Theta\gets \widetilde{\Theta}$
  \For{$\ell=1$ to $L$}
    \State For each block, build transformed factors from the current inner iterate $\Theta$
    using $\chi_{1,\beta}(\cdot,\widetilde{\cdot})$ and $\chi_{2,\beta}(\cdot,\widetilde{\cdot})$
    \State Update block
    $\Theta_b \gets \widetilde{\Theta}_b \od \left(\mathrm{Num}_{b,\mathrm{J}}\oslash \mathrm{Den}_{b,\mathrm{J}}\right)^{\gamma(\beta)}$
    using contractions with $\widetilde{\mathcal{P}}$ and $\widetilde{\mathcal{Q}}$
    \State Enforce positivity $\Theta\ge \eps$ entrywise
  \EndFor
  \State Set $\Theta^{(t+1)}\gets \Theta$
\EndFor
\end{algorithmic}
\end{algorithm}

In all experiments reported in Section~\ref{sec:exp_protocol}, we use a single inner step per outer iteration
($L=1$), so each reported J-CoMM iteration corresponds to one full sweep over the blocks. This choice is also the one covered by the iterate-convergence analysis of Section~\ref{subsec:jcomm_kl}.

\paragraph{For sparse tensors}
When $\X$ is sparse, contractions involving $\X$ (e.g., numerators based on $\mathcal{P}=\X\od \Xhat^{\beta-2}$)
can be accumulated efficiently by looping over nonzero entries.
In contrast, denominator terms involve $\mathcal{Q}=\Xhat^{\beta-1}$ and are typically dense because they depend on
the model values $\Xhat$.
In special cases (e.g., $\beta=1$ where $\mathcal{Q}\equiv \mathbf{1}$) they simplify substantially.

\subsection{Going beyond: Majorization-Minimization with Extrapolation}
\label{sec:mm_extrap}

A standard block majorization-minimization (block-MM) method updates one block at a time by
minimizing a block surrogate (majorizer) built at the current iterate.
The extrapolated block-MM framework (BMMe)~\cite{bmme} modifies \emph{only} the evaluation point of the surrogate:
before updating a block, one first forms an extrapolated (inertial) point from the two most recent iterates,
and then performs the same MM step but with the surrogate built (or evaluated) at this extrapolated point.
This can yield a noticeable acceleration at essentially negligible extra cost per iteration, since extrapolation is
just an elementwise operation on the parameters.

More precisely, for a block variable $x_i\in X_i$, BMMe forms
\[
\widehat x_i^{\,t} \;=\; x_i^{t} + \alpha_i^{t}\, P_i\!\big(x_i^{t}-x_i^{t-1}\big),
\]
and then performs the block-MM update
\[
x_i^{t+1}\in \arg\min_{x_i\in X_i} \; G_i^t\!\big(x_i \mid \widehat x_i^{\,t}\big),
\]
where $G_i^t(\cdot\mid \cdot)$ is a valid majorizer for the block objective (with other blocks fixed).
In the nonnegativity-constrained setting, a natural choice is
$P_i(\Delta)=[\Delta]_+$ (componentwise positive part), together with the usual positivity safeguard
$x\leftarrow\max(x,\varepsilon)$, consistent with Assumption~\ref{ass:eps}.

BMMe uses extrapolation sequences inspired by accelerated (mirror) descent, combined with a safeguard that
controls the extrapolation displacement.
A practical choice is a Nesterov-like sequence $\alpha_t^{\text{Nes}}$,
together with a cap depending on the displacement norm:
\[
\alpha_i^t \;=\; \min\!\left(\alpha_t^{\text{Nes}},\;\frac{c_t}{\|P_i(x_i^{t}-x_i^{t-1})\|+\delta}\right),
\]
where $\delta>0$ is a small constant and $(c_t)$ is a slowly decreasing or bounded sequence.
(Unless stated otherwise, we use the same scalar $\alpha_t$ for all blocks in our implementations.)

\smallskip
\noindent
Our contraction-only updates are closed-form multiplicative rules obtained from the same MM principle as classical
(unfolding-based) MU.
This suggests a natural BMMe-inspired extrapolation mechanism for our contraction-only updates:
one replaces the current block iterate by its extrapolated version when
(i) building the powered tensors used in the numerator/denominator contractions and
(ii) applying the multiplicative update (using the extrapolated block as the multiplicative pre-factor).

\smallskip
\noindent\textbf{(i) Extrapolated B-CoMM (CP).}
Maintain, for each factor, the previous outer iterate $A_{\mathrm{prev}}^{(n)}$.
Before updating mode $n$, form the extrapolated block
\[
\widehat A^{(n)} \;=\; \max\!\big(A^{(n)} + \alpha_t [A^{(n)}-A_{\mathrm{prev}}^{(n)}]_+,\;\varepsilon\big).
\]
Build the reconstruction $\widehat X$ using $\widehat A^{(n)}$ (and the other factors at their current values),
form $\mathcal P = X\odot \widehat X^{\beta-2}$ and $\mathcal Q=\widehat X^{\beta-1}$ using the positivity safeguard
of Assumption~\ref{ass:eps}, and apply the same contraction-only MU step, anchored at $\widehat A^{(n)}$:
\[
A^{(n)} \;\leftarrow\;
\max\!\left(
\widehat A^{(n)} \odot
\left(
\frac{\mathrm{CPContr}^{(n)}(\mathcal P;\{A^{(m)}\}_{m\neq n})}
     {\mathrm{CPContr}^{(n)}(\mathcal Q;\{A^{(m)}\}_{m\neq n})}
\right)^{\gamma(\beta)}
,\;\varepsilon\right).
\]
Finally set $A_{\mathrm{prev}}^{(n)}\leftarrow A^{(n)}$ at the end of the outer iteration.

\smallskip
\noindent\textbf{(ii) Extrapolated B-CoMM (Tucker).}
The same modification applies to each factor block $A^{(n)}$ and to the core $G$:
before updating a block, extrapolate it using $[\cdot]_+$ and $\alpha_t$, reconstruct $\widehat X$ with that
extrapolated block, and use $\widehat A^{(n)}$ (or $\widehat G$) as the multiplicative pre-factor in the
corresponding numerator/denominator update, based on the Tucker contractions in Appendix~\ref{app:tucker_explicit}.

\smallskip
\noindent\textbf{(iii) Heuristic outer-reference extrapolation for J-CoMM.}
A simple way to combine extrapolation with caching is to extrapolate \emph{the outer reference}:
\[
\widetilde\Theta \;\leftarrow\; \max\!\big(\Theta^{(t)} + \alpha_t[\Theta^{(t)}-\Theta^{(t-1)}]_+,\;\varepsilon\big),
\]
then build $\widetilde{\widehat X}$, $\widetilde{\mathcal P}$ and $\widetilde{\mathcal Q}$ from $\widetilde\Theta$,
and run the inner joint-MM steps exactly as in Algorithm~\ref{alg:joint_general}.
This preserves the main benefit of J-CoMM (reuse of $\widetilde{\mathcal P},\widetilde{\mathcal Q}$ across inner updates),
while injecting inertial acceleration at the outer level.

BMMe provides subsequence convergence guarantees under multi-convexity assumptions and suitable choices of
$\alpha_t$; in particular, the BMMe analysis in~\cite{bmme} covers matrix $\beta$-NMF for $\beta\in[1,2]$.
In our tensor setting, the extrapolation mechanisms above should therefore be viewed as
BMMe-inspired extensions of the contraction-only MM updates.
For B-CoMM, they preserve the same closed-form contraction structure and add only negligible
implementation cost.
For J-CoMM, outer-reference extrapolation is more heuristic: it preserves the caching
benefit of $\widetilde{\mathcal P}$ and $\widetilde{\mathcal Q}$, but it is not covered by the present
KL-based convergence result for noninertial J-CoMM.
Accordingly, we use these extrapolated variants as practical acceleration mechanisms in the
experiments, without claiming a convergence theory for them in the present paper.

\section{Numerical Tests}
\label{sec:exp_protocol}

\paragraph{Goal, loss, and reported metrics}
All methods minimize the entry-wise $\beta$-divergence between the input tensor $X$ and its reconstruction $\widehat X$.
In all plots, we report the \emph{mean} $\beta$-divergence per entry,
\[
\bar D_\beta(X,\widehat X) \;:=\; \frac{1}{|X|}\, D_\beta(X,\widehat X),
\qquad |X|=\prod_{n=1}^N I_n,
\]
for $\beta\in\{1/2,\,1,\,3/2\}$, and we plot $\bar D_\beta(X,\widehat X)$ versus (i) the iteration index and (ii) the
wall-clock CPU time.
Our implementation returns $D_\beta(X,\widehat X)$ as a sum over entries and is normalized by $|X|$ before plotting,
whereas NNEinFact reports a mean loss natively; we set its parameters to $\alpha=1$ and $\beta_{\mathrm{AB}}=\beta-1$
so that the reported loss matches the $\beta$-divergence used in our methods.
In the Uber benchmark, the loss is evaluated over all entries (mask of all ones), so all methods are compared on the
same objective and the same scale.

\paragraph{On the interpretation of the iteration axis}
For the unfolding-based MU baseline and for B-CoMM, one iteration corresponds to one full sweep over all blocks.
For J-CoMM, one outer iteration consists of one surrogate refresh followed by $L$ inner block sweeps; in all reported
experiments we set $L=1$.
Hence, all methods perform one full block sweep per reported iteration.
That said, the arithmetic work per sweep is not identical across methods, since J-CoMM reuses a fixed reference
surrogate and reference-powered tensors within the sweep, whereas block-MM recomputes the current powered quantities
block by block.
For this reason, wall-clock time remains the primary fairness metric.

\paragraph{Compared methods}
We benchmark the following implementations:
\begin{enumerate}
\item \textbf{Unfolding-based MU (baseline).}
We implement classical multiplicative updates (MU) through tensor unfoldings (matricizations) and associated
MTTKRP\footnote{MTTKRP stands for \emph{Matricized Tensor Times Khatri-Rao Product}. It is the standard kernel in CP
computations: given a mode-$n$ unfolding $X_{(n)}$ and the Khatri-Rao product of the other factor matrices, MTTKRP
computes the matrix product $X_{(n)}(\odot_{k\neq n} A^{(k)})$.}
computations, following the MU lineage of Lee and Seung~\cite{leeseung1999} and the general $\beta$-divergence MM
derivation of F\'evotte and Idier~\cite{fevotteidier2011}.
After each multiplicative update, we apply a standard truncation safeguard,
$U \leftarrow \max(U,\varepsilon)$ entrywise (with a small $\varepsilon>0$), to prevent zero-locking and improve numerical
robustness in finite precision arithmetic~\cite{gillis2020nmf}.
This truncation can also be interpreted as enforcing entrywise constraints on all model parameters
(e.g., factor matrices and, for Tucker, the core), $\Theta \ge \varepsilon$;
in this setting, modified MU schemes enjoy monotonicity and convergence-to-stationary-point guarantees
for the $\varepsilon$-constrained objective; see~\cite{gillis2020nmf}.
In practice, MU methods are widely regarded as among the most efficient approaches for $\beta$-divergence NMF-type
models; see, e.g.,~\cite{gillis2020nmf,hiengillis2021kl,bmme}.

\item \textbf{Block-MM (ours).} The contraction-only block MM baseline in Algorithm~\ref{alg:block_cp}.

\item \textbf{Joint-MM (ours).} The contraction-only joint MM scheme in Algorithm~\ref{alg:joint_general}.

\item \textbf{NNEinFact.} The general einsum-based framework of~\cite{hoodschein2026}.
\end{enumerate}
All methods are initialized with the same random seed within each run (so that data generation and initialization are
consistent across methods).

\subsection{Implementation details and fairness of runtime comparisons}
\label{subsec:impl_fairness}

\paragraph{Python/NumPy vs.\ PyTorch implementations}
Our implementations (B-CoMM, J-CoMM) and the unfolding-based MU baseline are written in Python/NumPy.
NNEinFact is implemented in PyTorch and executes the main contractions via optimized tensor kernels; on CPU,
these kernels can leverage intra-operation multithreading (controlled by \texttt{torch.set\_num\_threads}).
To provide a fair and informative comparison, we run NNEinFact with three CPU thread settings ($1$, $4$, and $8$) and
report all three runtime curves.
In addition, we report a single-thread reference configuration in which PyTorch is set to one thread and
NumPy/BLAS threading is restricted to one thread, so that all methods are compared under the same threading budget.

\paragraph{Reproducibility}
An online Colab demo (CP and Tucker, including B-CoMM and J-CoMM) is available at:
\url{https://colab.research.google.com/drive/1vyXcP76_XGRoNdCxQxRCPPV3MwvgY8vv?usp=sharing}.
The full Python codebase, including scripts to reproduce every figure reported in this paper, is available at:
\url{https://github.com/vleplat/CoMM.git}.
All experiments were run on a MacBook Pro (M4 Pro, 24~GB memory).
Reported runtimes are wall-clock CPU measurements intended for relative comparisons between methods; absolute timings
may differ on other CPU architectures and on GPU.

\subsection{Synthetic data}

\paragraph{Synthetic CP}
We generate a $4$th-order nonnegative tensor of size $80\times70\times60\times50$ from a rank-$R$ CP model with $R=10$. For each random seed, we sample nonnegative ground-truth factor matrices, form the corresponding CP tensor $X$,
and run every algorithm from a random nonnegative initialization.
We repeat this procedure over $5$ seeds and plot the mean performance curve, with a lightly shaded region
showing the empirical variability (standard deviation) across seeds.
We repeat the full benchmark for each $\beta\in\{1/2,\,1,\,3/2\}$; the corresponding results are reported in
Figure~\ref{fig:synth_cp}.

\begin{figure*}[ht!]
\centering

\noindent\textbf{Synthetic CP, $\beta=0.5$}\par\vspace{1mm}
\includegraphics[width=0.98\textwidth]{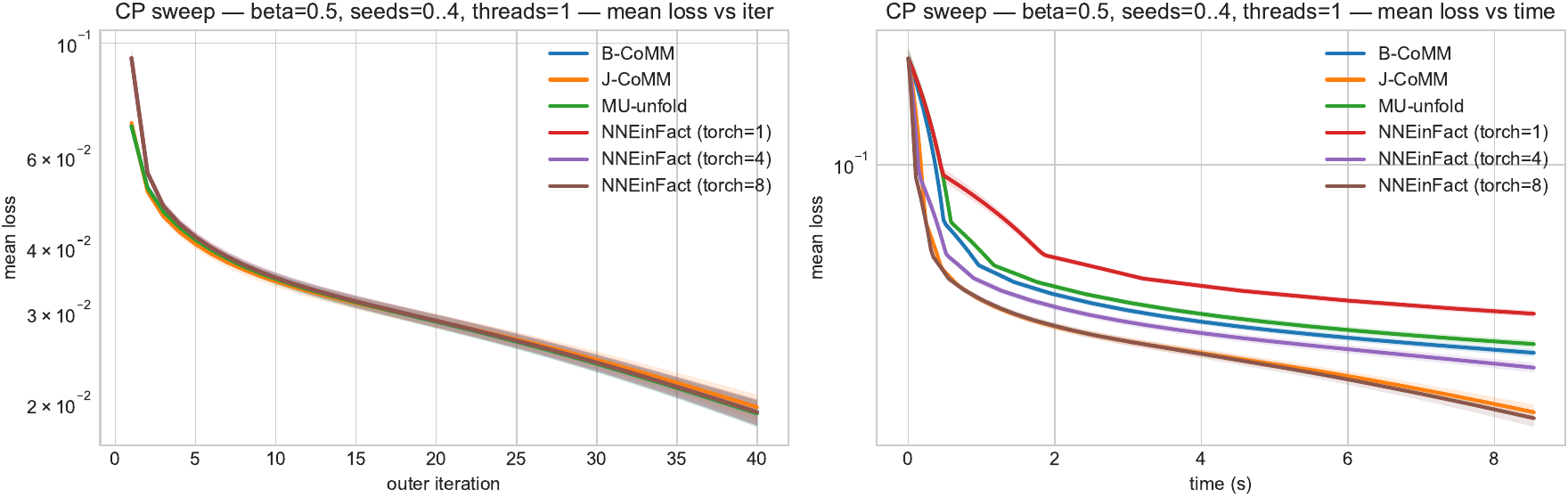}

\vspace{2mm}
\noindent\textbf{Synthetic CP, $\beta=1$}\par\vspace{1mm}
\includegraphics[width=0.98\textwidth]{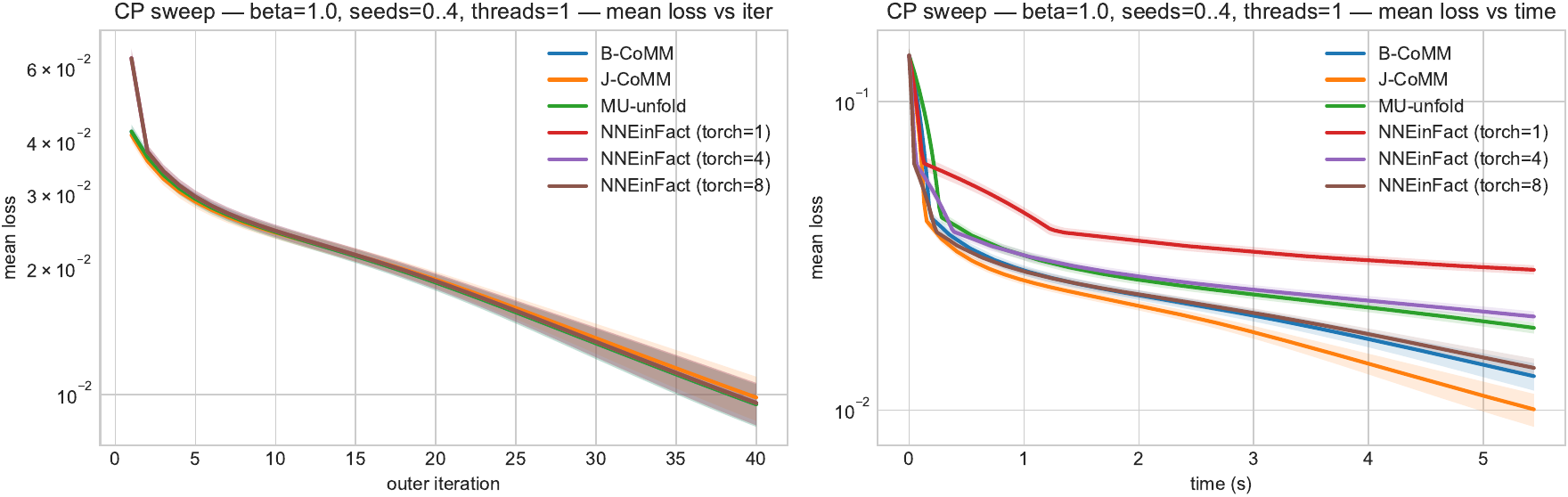}

\vspace{2mm}
\noindent\textbf{Synthetic CP, $\beta=1.5$}\par\vspace{1mm}
\includegraphics[width=0.98\textwidth]{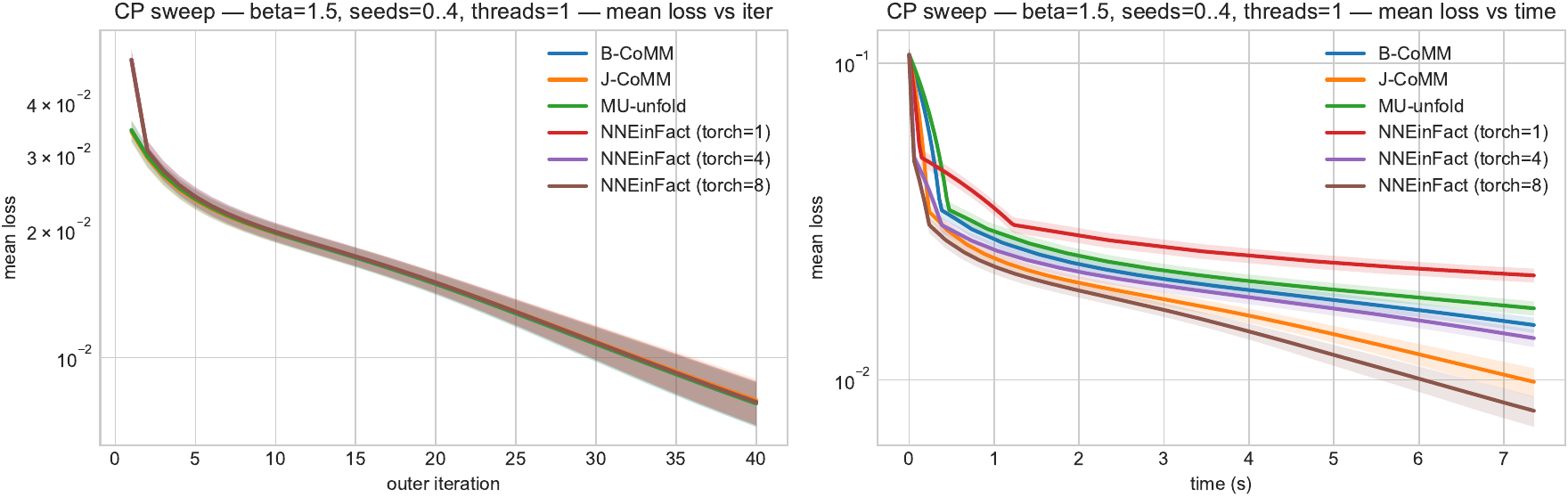}

\caption{Synthetic CP benchmark (order $4$, size $80{\times}70{\times}60{\times}50$, rank $R=10$).
Each row corresponds to one $\beta\in\{0.5,1,1.5\}$ and shows the mean normalized loss $\bar D_\beta(X,\widehat X)$
versus iteration (left) and wall-clock CPU time (right), averaged over $5$ random seeds (shaded band: variability).
For runtime fairness, NumPy/BLAS is restricted to one thread; NNEinFact is additionally reported for Torch CPU threads
$1/4/8$ (see legend).}
\label{fig:synth_cp}
\end{figure*}

\paragraph{Synthetic Tucker}
We repeat the same protocol for a Tucker model.
For each seed, we sample a nonnegative core tensor $G\in\R_+^{10\times 9\times 8\times 7}$ and nonnegative factor
matrices $A^{(1)}\in\R_+^{80\times 10}$, $A^{(2)}\in\R_+^{70\times 9}$, $A^{(3)}\in\R_+^{60\times 8}$, and
$A^{(4)}\in\R_+^{50\times 7}$, construct the input tensor as
$X = G \times_1 A^{(1)} \times_2 A^{(2)} \times_3 A^{(3)} \times_4 A^{(4)}$,
and run each method from a random nonnegative initialization.
We aggregate results over $5$ seeds by reporting the mean curve and a light-shaded band for variability, and we repeat
the experiment for $\beta\in\{1/2,\,1,\,3/2\}$; the corresponding results are reported in
Figure~\ref{fig:synth_tucker}.

\begin{figure*}[ht!]
\centering

\noindent\textbf{Synthetic Tucker, $\beta=0.5$}\par\vspace{1mm}
\includegraphics[width=0.98\textwidth]{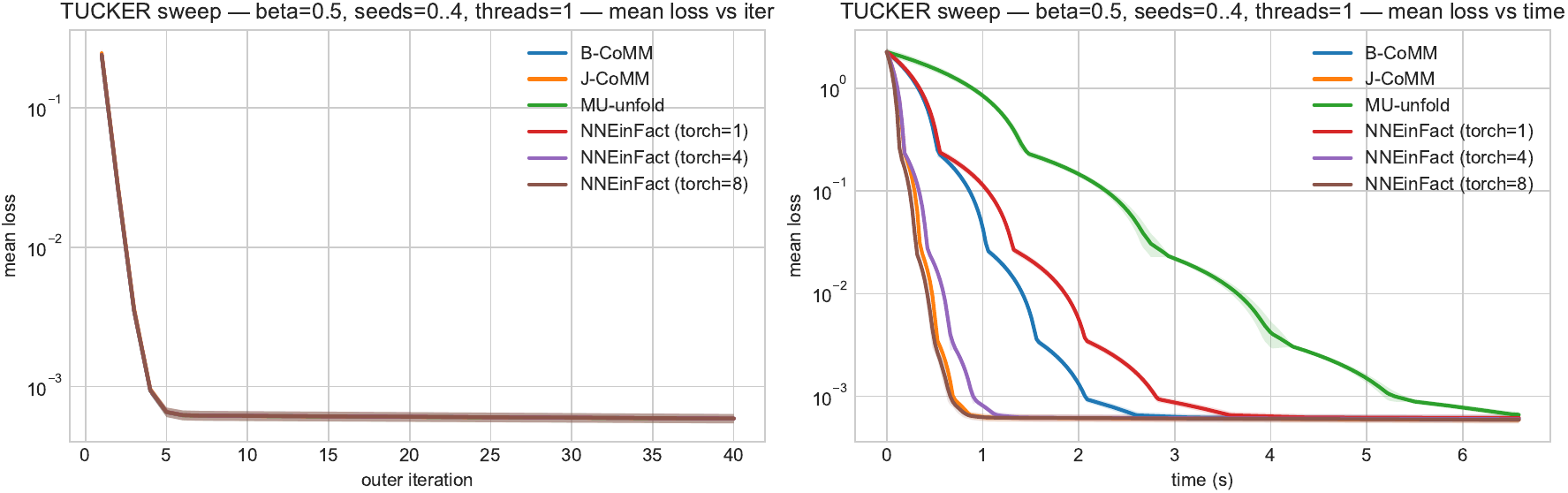}

\vspace{2mm}
\noindent\textbf{Synthetic Tucker, $\beta=1$}\par\vspace{1mm}
\includegraphics[width=0.98\textwidth]{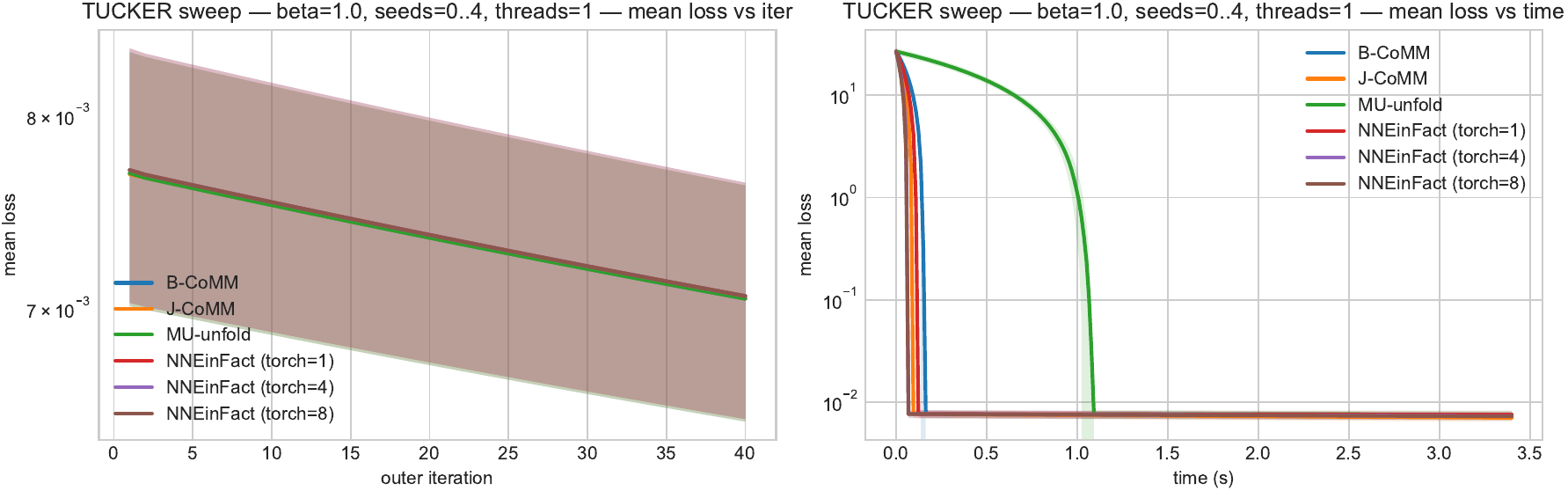}

\vspace{2mm}
\noindent\textbf{Synthetic Tucker, $\beta=1.5$}\par\vspace{1mm}
\includegraphics[width=0.98\textwidth]{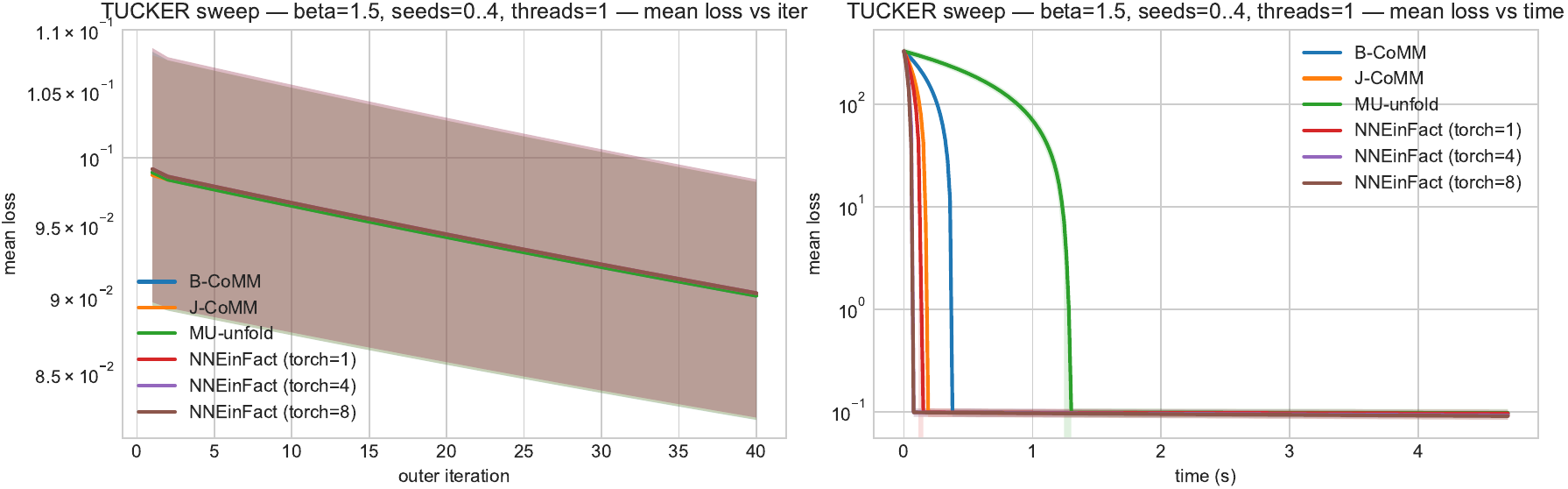}

\caption{Synthetic Tucker benchmark (order $4$, size $80{\times}70{\times}60{\times}50$, multilinear ranks $(10,9,8,7)$).
Each row corresponds to one $\beta\in\{0.5,1,1.5\}$ and shows the mean normalized loss $\bar D_\beta(X,\widehat X)$
versus iteration (left) and wall-clock CPU time (right), averaged over $5$ random seeds (shaded band: variability).
For runtime fairness, NumPy/BLAS is restricted to one thread; NNEinFact is additionally reported for Torch CPU threads
$1/4/8$ (see legend).}
\label{fig:synth_tucker}
\end{figure*}

\paragraph{Observations (synthetic CP and Tucker)}
Figures~\ref{fig:synth_cp}-\ref{fig:synth_tucker} show that, when plotted \emph{per iteration}, all methods exhibit
similar descent profiles: contraction-only implementations preserve the iteration-wise behavior of the underlying
MU/MM updates. The main differences appear in \emph{wall-clock time}.

\smallskip
\noindent\textbf{CP.}
Across $\beta\in\{1/2,1,3/2\}$, J-CoMM consistently provides the best (or near-best) CPU-time performance, with
B-CoMM close behind. In particular, J-CoMM is competitive with the fastest NNEinFact setting (8 threads) and,
for the \emph{same} number of CPU threads, our contraction-only methods (B-CoMM and J-CoMM) are faster across all CP
benchmarks. This confirms that coupling contraction-only updates with \emph{joint majorization} (a fixed surrogate over
a short inner loop) yields substantial reductions in wall-clock time.

\smallskip
\noindent\textbf{Competitor at $\beta=0$.}
For CP, we do not report NNEinFact at $\beta=0$ because in our setting it did not reliably decrease the objective,
whereas our MM updates remain stable for all $\beta\in[0,2)$.

\smallskip
\noindent\textbf{Tucker.}
The same trend holds: CPU-time curves reveal large speedups of contraction-only methods over unfolding-based MU.
NNEinFact is slightly ahead in these Tucker experiments, but J-CoMM remains very close and consistently
outperforms both unfolding-based baselines and the 1-thread competitor, while B-CoMM also yields clear
runtime gains. Overall, the results support the practical benefit of contraction-only formulas and joint majorization
for multilinear $\beta$-divergence objectives.

\subsection{Real data: Uber pickups tensor}
\label{subsec:uber}

\paragraph{Real data: Uber pickups (optimization benchmark)}
We use the \emph{Uber pickups} dataset in the same tensor format as the NNEinFact demo~\cite{hoodschein2026}, namely a nonnegative
$5$-way count tensor $X\in\R_+^{27\times 24\times 7\times 100\times 100}$ whose modes correspond to
(week, hour, day-of-week, latitude index, longitude index).
We fit a nonnegative Tucker model with multilinear ranks $(10,10,5,10,10)$ under the $\beta$-divergence loss.
For this real-data experiment, we restrict attention to optimization performance and report only the normalized
objective value versus (i) outer iteration and (ii) wall-clock CPU time.
Figure~\ref{fig:uber_tucker} reports the results for $\beta\in\{1/2,1,3/2\}$.

\begin{figure*}[ht!]
  \centering

  \includegraphics[width=0.98\textwidth]{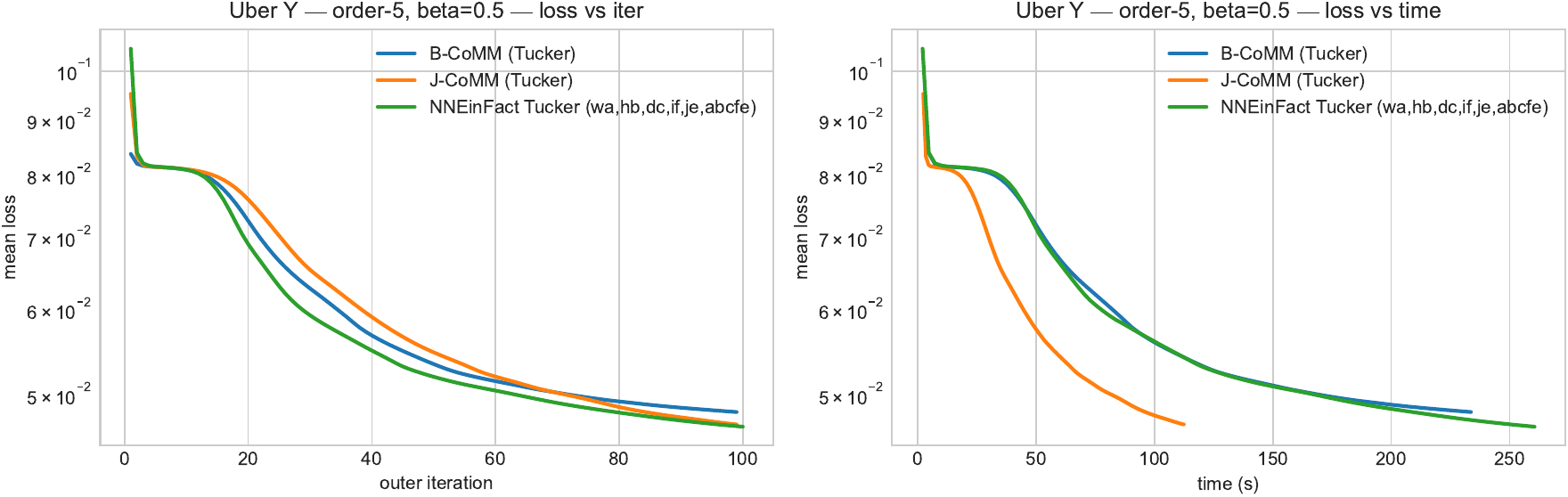}

  \medskip
  \includegraphics[width=0.98\textwidth]{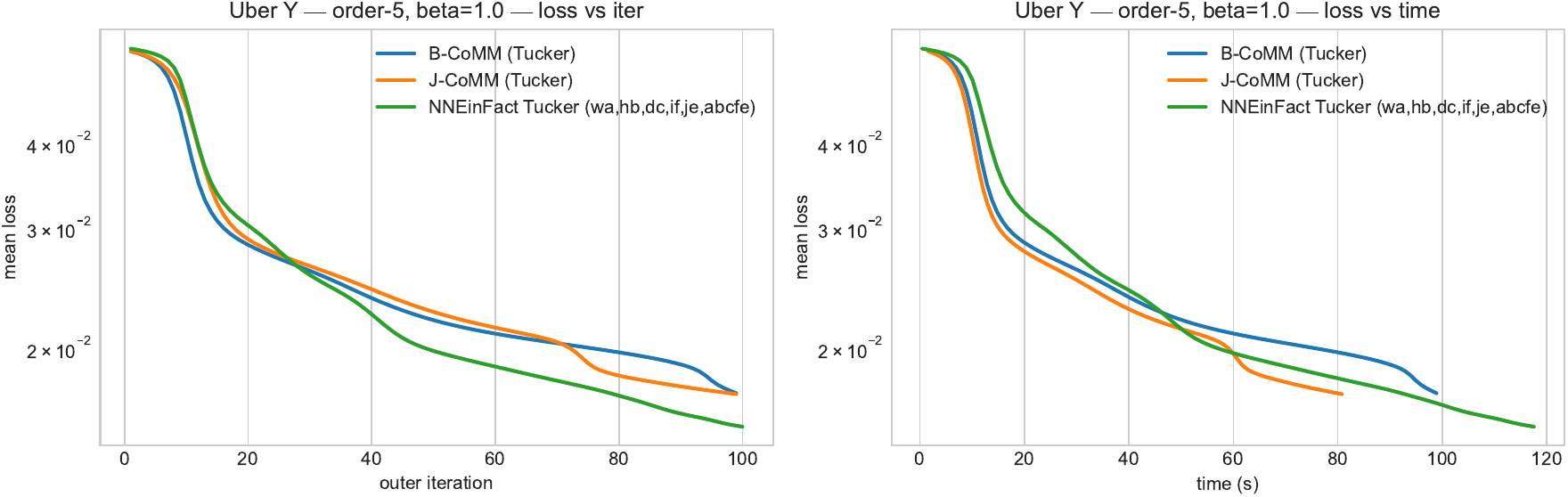}

  \medskip
  \includegraphics[width=0.98\textwidth]{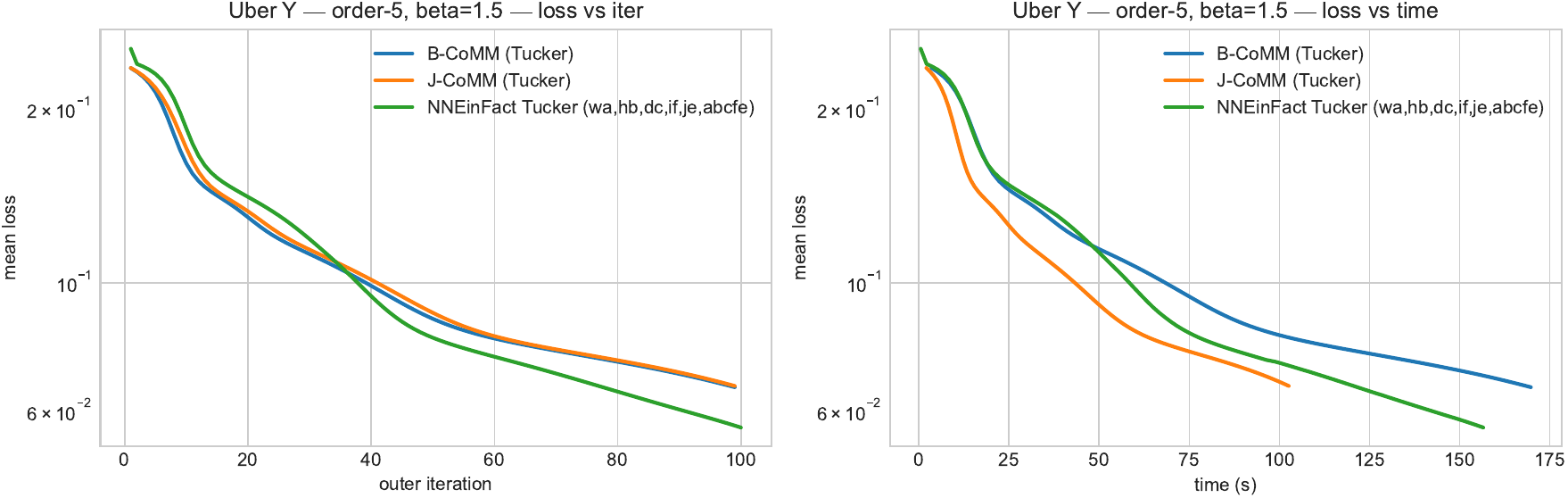}

  \caption{Uber pickups tensor ($27\times 24\times 7\times 100\times 100$): Tucker fit with ranks $(10,10,5,10,10)$.
Each row corresponds to one $\beta\in\{1/2,1,3/2\}$ and shows the normalized objective (mean $\beta$-divergence per entry)
versus outer iteration (left) and wall-clock CPU time (right).
All methods are run in a single-thread CPU configuration (NumPy/BLAS and PyTorch restricted to one thread).}
  \label{fig:uber_tucker}
\end{figure*}

\paragraph{Observations (Uber Tucker benchmark)}
Across all tested values of $\beta$, all methods substantially decrease the normalized $\beta$-divergence.
The main differences appear in wall-clock \emph{time-to-loss}: J-CoMM consistently reaches a given objective
level faster than the other methods, confirming that \emph{reusing a single joint majorizer over a short inner loop}
can yield practical speedups in large-scale settings.
In contrast, NNEinFact is often competitive in \emph{loss vs iteration} and, for $\beta\ge 1$ on this dataset,
can attain the lowest final objective among the compared methods, while remaining slightly slower than J-CoMM in
wall-clock time.
The Uber tensor is highly sparse (about $99.4\%$ zeros), but in these experiments we used dense contractions for all
methods and did not exploit sparsity (e.g., by accumulating numerator terms only over nonzeros), in order to keep the
comparison aligned with the reference implementation and to isolate algorithmic effects from sparse-engineering choices.
Finally, unlike the competitor in its current implementation, our methods handle the full range $\beta\in[0,2)$
reliably; in particular, for the Itakura-Saito case $\beta=0$ we apply a standard small positive floor to $X$ to avoid
an infinite objective caused by zero entries.

\section{Conclusions and Future Work}\label{sec:conclusion}

We studied majorization-minimization methods for nonnegative CP and Tucker decompositions under the entrywise
$\beta$-divergence family, with an emphasis on \emph{unfolding-free} implementations.
Our first contribution is to rewrite classical block-MM multiplicative updates in a \emph{contraction-only} form,
so that all numerators and denominators can be computed directly by tensor contractions (einsum-style operations),
without explicit matricizations or large auxiliary matrices.
Our second and main contribution is a \emph{joint} majorization strategy: at each outer iteration, we build a single
surrogate at a reference point and decrease it through inexpensive inner updates while reusing cached
reference-powered tensors.
We proved tightness of the proposed majorizers and established monotonic decrease of the objective (after each block
update for block-MM and after each outer iteration for joint-MM), which yields convergence of the corresponding sequence
of objective values.
Beyond this, we showed that block-MM fits naturally within the BSUM viewpoint, and we established for J-CoMM a
KL-based iterate-convergence result for one inner sweep per outer iteration under standard regularity assumptions.

Numerical experiments on synthetic tensors and the Uber spatiotemporal count tensor confirm the practical benefits of
the proposed joint-MM scheme.
Across all tested values of $\beta$, the per-iteration progress of the different MM-based methods is broadly comparable,
whereas the main differences appear in wall-clock time.
In particular, for the CP model, we consistently observe significant speedups over unfolding-based baselines and
competitive performance relative to the einsum-factorization competitor; under the same CPU threading budget, our
methods are faster across all reported tests.
These results show that coupling joint majorization with contraction-only multiplicative updates can lead to substantial
runtime savings: by keeping a single surrogate fixed over a short inner loop, the method updates each block through
inexpensive tensor contractions while reusing the reference-powered quantities required by the surrogate.

Several extensions are natural directions for future research:
\begin{itemize}
\item \textbf{Regularized models and additional constraints.}
Extend the contraction-only block and joint MM constructions to regularized nonnegative low-rank approximation
problems (e.g., with scale-invariant regularization and other penalties), building on the framework of
\cite{Cohen_Leplat_2025}.

\item \textbf{Richer multilinear factorizations.}
Generalize the approach beyond CP and Tucker to more expressive constrained tensor models, such as nonnegative
block-term decompositions (BTD) and related structured multilinear formats, where the model remains a sum of
nonnegative multilinear contributions.

\item \textbf{Beyond standard multilinear contractions.}
Investigate how far the majorization principles developed here extend to generic nonnegative einsum models and
contraction graphs (possibly with shared factors and repeated indices), and identify broad classes of models for which
one can obtain tight separable majorizers and closed-form multiplicative updates.

\item \textbf{Sharper convergence theory and accelerated variants.}
Extend the iterate-convergence analysis of J-CoMM beyond the current setting, for instance to multiple inner sweeps,
weaker compactness or positivity assumptions, and broader classes of feasible sets; and analyze practical
acceleration mechanisms (e.g., extrapolation) within the tensor joint-MM setting.
\end{itemize}

\appendix

\section{Proofs for block MM majorizers and multiplicative updates}
\label{app:proof_block_mm}

This appendix provides a complete proof of the surrogate construction and the block multiplicative
updates stated in Theorems~\ref{thm:block_cp} and~\ref{thm:block_tucker}.
The proofs follow the standard MM pattern: (i) build an entrywise upper bound using Jensen's inequality
(and, when $\beta<1$, a convex-concave split with a tangent bound), (ii) show tightness at the current
iterate, (iii) exploit separability in the updated block, and (iv) minimize the resulting 1D functions.

\subsection{Scalar \texorpdfstring{$\beta$}{beta}-divergence as a function of the model value}
Fix $x\ge 0$ and consider $y>0\mapsto d_\beta(x\mid y)$.
For $\beta\neq 0,1$ we rewrite
\begin{equation}
\label{eq:dbeta_split}
d_\beta(x\mid y)
=
\underbrace{\frac{1}{\beta}y^\beta}_{\text{term (I)}}
\;+\;
\underbrace{\frac{x}{1-\beta}y^{\beta-1}}_{\text{term (II)}}
\;+\; \text{constant in $y$}.
\end{equation}

For $\beta\in[0,1)$, term (I) is concave in $y$ while term (II) is convex in $y$.
For $\beta\in[1,2]$, both terms are convex in $y$ (indeed, $y^\beta$ is convex for $\beta\ge 1$,
and for $\beta\in[1,2]$ the map $y^{\beta-1}$ is concave while $\frac{x}{1-\beta}\le 0$, so term (II) is convex),
hence $d_\beta(x\mid y)$ is convex in $y$ and Jensen can be applied directly at the level of $d_\beta$.

\noindent\textbf{Remark (limit cases).}
The cases $\beta=1$ (generalized KL) and $\beta=0$ (Itakura-Saito) are obtained by continuity as limits
$\beta\to 1$ and $\beta\to 0$, respectively; hence expressions involving $1/\beta$ or $1/(\beta-1)$
are understood in this limiting sense.

\subsection{A Jensen majorizer for a convex function of a sum}
Let $\varphi:\R_{++}\to\R$ be convex and let $y=\sum_{\rho} z_\rho$ with $z_\rho\ge 0$.
Fix reference values $\tilde z_\rho\ge 0$ with $\tilde y=\sum_\rho \tilde z_\rho>0$ and define weights
\[
\tilde\lambda_\rho := \frac{\tilde z_\rho}{\tilde y},\qquad \sum_\rho \tilde\lambda_\rho=1.
\]
Then Jensen's inequality gives the standard MM bound
\begin{equation}
\label{eq:jensen_sum}
\varphi\!\Big(\sum_{\rho} z_\rho\Big)
=
\varphi\!\Big(\sum_{\rho}\tilde\lambda_\rho \frac{z_\rho}{\tilde\lambda_\rho}\Big)
\le
\sum_{\rho}\tilde\lambda_\rho\,\varphi\!\Big(\frac{z_\rho}{\tilde\lambda_\rho}\Big),
\end{equation}
and equality holds at $z_\rho=\tilde z_\rho$ for all $\rho$.

\subsection{A tangent upper bound for a concave term}
Let $\psi:\R_{++}\to\R$ be concave and differentiable. Then for any $\tilde y>0$,
\begin{equation}
\label{eq:tangent_concave}
\psi(y)\le \psi(\tilde y)+\psi'(\tilde y)(y-\tilde y),
\end{equation}
with equality at $y=\tilde y$.

\subsection{Entrywise surrogate for the \texorpdfstring{$\beta$}{beta}-divergence when the model is a sum}
Consider a model entry of the form
\[
\widehat{X}_i(\theta)=\sum_{\rho\in\mathcal R} z_{i,\rho}(\theta),\qquad z_{i,\rho}(\theta)\ge 0.
\]
Fix a reference point $\tilde\theta$, set $\tilde X_i=\widehat{X}_i(\tilde\theta)$, and define
$\tilde\lambda_{i,\rho}=z_{i,\rho}(\tilde\theta)/\tilde X_i$.

\paragraph{Case $1\le\beta<2$.}
Since $y\mapsto d_\beta(x\mid y)$ is convex on $y>0$ in this regime, applying
\eqref{eq:jensen_sum} to $\varphi(y)=d_\beta(X_i\mid y)$ yields
\begin{equation}
\label{eq:surrogate_beta_ge_1}
d_\beta\!\Big(X_i\,\Big|\,\sum_\rho z_{i,\rho}(\theta)\Big)
\le
\sum_\rho \tilde\lambda_{i,\rho}\,
d_\beta\!\Big(X_i\,\Big|\,\frac{z_{i,\rho}(\theta)}{\tilde\lambda_{i,\rho}}\Big),
\end{equation}
with equality at $\theta=\tilde\theta$.

\paragraph{Case $0\le\beta<1$.}
Use the split \eqref{eq:dbeta_split}. Apply Jensen \eqref{eq:jensen_sum} to the convex term (II),
and apply the tangent bound \eqref{eq:tangent_concave} to the concave term (I).
This yields an explicit entrywise surrogate of the form
\begin{equation}
\label{eq:surrogate_beta_lt_1}
d_\beta(X_i\mid \widehat{X}_i(\theta))
\le
\sum_{\rho}\tilde\lambda_{i,\rho}\,
\frac{X_i}{1-\beta}\Big(\frac{z_{i,\rho}(\theta)}{\tilde\lambda_{i,\rho}}\Big)^{\beta-1}
\;+\;
\Big(\frac{1}{\beta}\tilde X_i^\beta + \tilde X_i^{\beta-1}(\widehat{X}_i(\theta)-\tilde X_i)\Big)
\;+\; \text{const},
\end{equation}
again tight at $\theta=\tilde\theta$.

Summing \eqref{eq:surrogate_beta_ge_1} or \eqref{eq:surrogate_beta_lt_1} over all indices $i$ produces a global
surrogate for $D_\beta(\X,\widehat{\X}(\theta))$.

\subsection{Separable minimization and the multiplicative update}

We now explain why minimizing the surrogate w.r.t.\ one block yields the multiplicative update.

Fix a block variable (an entry of a factor matrix or the core) denoted $u\ge\eps$, and collect all
surrogate terms that depend on $u$.
In both CP and Tucker, each $z_{i,\rho}(\theta)$ is multilinear, hence \emph{linear} in $u$ when the other
blocks are fixed. Consequently, the surrogate becomes separable in the entries of the updated block.

\paragraph{Resulting 1D forms.}
After collecting constants, the scalar surrogate in one variable $u\ge\eps$ takes the form
\[
\text{if } 1<\beta<2:\quad g(u)=a\,u^\beta - b\,u^{\beta-1} + \text{const},\qquad a>0,\;b\ge 0,
\]
with the case $\beta=1$ (KL) obtained by continuity and yielding
\[
\text{if } \beta=1:\quad g(u)=a\,u - b\log u + \text{const},\qquad a>0,\;b\ge 0,
\]
and for $0\le\beta<1$,
\[
\text{if } 0\le\beta<1:\quad g(u)=a\,u + b\,u^{\beta-1} + \text{const},\qquad a>0,\;b>0.
\]
If $b=0$, the unconstrained minimizer is at $u=0$; on the $\varepsilon$-constrained set the minimizer is attained at the lower bound $u=\varepsilon$.

Each function is convex on $u>0$ and is strictly convex whenever $b>0$; in all cases, on the
$\varepsilon$-constrained set $u\ge\varepsilon$ the minimizer is unique (if $b=0$, it is attained at $u=\varepsilon$).

Setting $g'(u)=0$ yields a closed-form minimizer.
In particular, for $1\le\beta<2$ the minimizer depends linearly on the ratio $b/a$ (hence exponent $1$),
and for $0\le\beta<1$ the minimizer yields exponent $1/(2-\beta)$.
This exactly matches the exponent $\gamma(\beta)$ of Remark~\ref{rem:gamma}.
Here $\mathrm{Num}$ and $\mathrm{Den}$ are the contractions that appear in the main text
(e.g., $\mathrm{Num}^{(n)}$, $\mathrm{Den}^{(n)}$), obtained by collecting the coefficients $a$ and $b$.

\subsection{Application to CP: proof of Theorem~\ref{thm:block_cp}}
For CP, each model entry is a sum over rank components:
\[
\widehat{X}_i = \sum_{r=1}^R z_{i,r},\qquad z_{i,r}:=\prod_{n=1}^N A^{(n)}_{i_n r}.
\]
Fix a mode $n$ and update $A^{(n)}$ with other factors fixed. Each $z_{i,r}$ is linear in
$A^{(n)}_{i_n r}$:
\[
z_{i,r} = A^{(n)}_{i_n r}\, s^{(n)}_r(i_{-n}),\qquad
s^{(n)}_{r}(i_{-n})=\prod_{m\neq n} A^{(m)}_{i_m r}.
\]
Applying the entrywise surrogate above and collecting terms depending on a single entry
$A^{(n)}_{i_n r}$ yields the 1D surrogate form described previously, with coefficients
\[
\mathrm{Num}^{(n)}_{i_n r}=\sum_{i_{-n}} \mathcal{P}_{i}\,s^{(n)}_{r}(i_{-n}),\qquad
\mathrm{Den}^{(n)}_{i_n r}=\sum_{i_{-n}} \mathcal{Q}_{i}\,s^{(n)}_{r}(i_{-n}),
\]
which leads to the multiplicative update of Theorem~\ref{thm:block_cp}.
Monotonic descent follows from the MM property (surrogate minimization decreases the objective).

\subsection{Application to Tucker: proof of Theorem~\ref{thm:block_tucker}}
For Tucker, each entry is
\[
\widehat{X}_i = \sum_{j_1,\dots,j_N} z_{i,j},\qquad
z_{i,j} := G_{j_1\dots j_N}\prod_{n=1}^N A^{(n)}_{i_n j_n}.
\]
Fixing all blocks except one makes $z_{i,j}$ linear in any single updated entry, so the same
surrogate construction applies.
Carrying out the collection of coefficients yields exactly the contractions shown in the main text:
the core contractions $\mathcal{P}_{\mathrm{core}}$, $\mathcal{Q}_{\mathrm{core}}$ for updating $G$, and the
factor contractions $\mathrm{Num}^{(n)}$, $\mathrm{Den}^{(n)}$ for updating $A^{(n)}$.
The multiplicative updates and monotonic descent follow identically.

\section{Proof details for the joint majorizer and the inner updates}
\label{app:proof_joint_mm}

This appendix provides the missing proof details for the joint majorization-minimization (joint-MM)
construction used in Section~\ref{sec:jmm_multilinear}.
We focus on two statements:

\begin{itemize}
\item the construction of a \emph{single} surrogate $G(\Theta\mid \widetilde\Theta)$ that majorizes the objective
for \emph{all} variables $\Theta$ jointly, and is tight at the reference point $\widetilde\Theta$;
\item why the inner multiplicative updates decrease that fixed surrogate.
\end{itemize}

Throughout, we work under Assumption~\ref{ass:eps} so that all quantities are well-defined.

\subsection{A generic entrywise setup}
Fix one tensor entry index $i=(i_1,\dots,i_N)$.
Assume that the model prediction can be written as a sum of nonnegative contributions
\begin{equation}
\label{eq:app_sum_model}
\widehat{\X}_i(\Theta) = \sum_{\rho\in\mathcal{R}} z_{i,\rho}(\Theta),
\qquad z_{i,\rho}(\Theta)\ge 0,
\end{equation}
where $\rho$ indexes components.

\paragraph{CP.}
$\mathcal{R}=\{1,\dots,R\}$ and $z_{i,r}(\Theta)=\prod_{n=1}^N A^{(n)}_{i_n r}$.

\paragraph{Tucker.}
$\mathcal{R}=\{1,\dots,J_1\}\times\cdots\times\{1,\dots,J_N\}$ and
$z_{i,j}(\Theta)=G_{j_1\dots j_N}\prod_{n=1}^N A^{(n)}_{i_n j_n}$.

Let $\widetilde\Theta$ be a reference point and define $\widetilde{\Xhat}_i:=\widehat{\X}_i(\widetilde\Theta)>0$.
Define the reference weights
\begin{equation}
\label{eq:app_lambda}
\widetilde\lambda_{i,\rho}:=\frac{z_{i,\rho}(\widetilde\Theta)}{\widetilde{\Xhat}_i},
\qquad
\sum_{\rho\in\mathcal{R}}\widetilde\lambda_{i,\rho}=1.
\end{equation}
These weights depend only on $\widetilde\Theta$, hence they are constant during the inner loop.

\subsection{Entrywise upper bound for \texorpdfstring{$\beta\in[1,2)$}{beta in [1,2)}}
For fixed $x\ge 0$, when $\beta\in[1,2)$ the map $y\mapsto d_\beta(x\mid y)$ is convex on $y>0$.
Using \eqref{eq:app_sum_model} and \eqref{eq:app_lambda}, write
\[
\widehat{\X}_i(\Theta)=\sum_{\rho}\widetilde\lambda_{i,\rho}\,
\frac{z_{i,\rho}(\Theta)}{\widetilde\lambda_{i,\rho}}.
\]
Jensen's inequality gives the entrywise majorization
\begin{equation}
\label{eq:app_jensen_beta_ge_1}
d_\beta\!\Big(\X_i \,\Big|\, \widehat{\X}_i(\Theta)\Big)
\le
\sum_{\rho\in\mathcal{R}}\widetilde\lambda_{i,\rho}\,
d_\beta\!\Big(\X_i \,\Big|\, \frac{z_{i,\rho}(\Theta)}{\widetilde\lambda_{i,\rho}}\Big).
\end{equation}
Moreover, equality holds at $\Theta=\widetilde\Theta$ because then
$z_{i,\rho}(\Theta)/\widetilde\lambda_{i,\rho}=\widetilde{\Xhat}_i$ for all $\rho$.

\subsection{Entrywise upper bound for \texorpdfstring{$\beta\in[0,1)$}{beta in [0,1)}}
When $\beta\in[0,1)$, the standard MM construction uses a convex--concave split in the second argument.
One can write $d_\beta(x\mid y)$ as a sum of a convex part in $y$ and a concave part in $y$.
The convex part is majorized by Jensen in the same way as \eqref{eq:app_jensen_beta_ge_1},
and the concave part is upper bounded by its first-order Taylor expansion at $y=\widetilde{\Xhat}_i$.
This again yields an entrywise bound of the form
\begin{equation}
\label{eq:app_beta_lt_1_bound}
d_\beta(\X_i\mid \widehat{\X}_i(\Theta)) \le G_i(\Theta\mid\widetilde\Theta),
\qquad
G_i(\widetilde\Theta\mid\widetilde\Theta)=d_\beta(\X_i\mid \widetilde{\Xhat}_i).
\end{equation}
The important point for the algorithm is that $G_i(\cdot\mid\widetilde\Theta)$ is a sum over $\rho$
whose dependence on $z_{i,\rho}(\Theta)$ can be separated using the fixed weights \eqref{eq:app_lambda}.

\paragraph{Remark.}
The resulting formulas coincide with the classical $\beta$-NMF MM constructions, with the only change that
the index $\rho$ may represent CP components ($r$) or Tucker multi-indices ($j$).

\subsection{From entrywise bounds to a global joint surrogate}
Summing \eqref{eq:app_jensen_beta_ge_1} (for $\beta\in[1,2)$) or \eqref{eq:app_beta_lt_1_bound}
(for $\beta\in[0,1)$) over all tensor entries $i$ yields a global surrogate
\begin{equation}
\label{eq:app_global_surrogate}
F(\Theta)=D_\beta(\X,\widehat\X(\Theta))
\le
G(\Theta\mid\widetilde\Theta):=\sum_i G_i(\Theta\mid\widetilde\Theta),
\qquad
G(\widetilde\Theta\mid\widetilde\Theta)=F(\widetilde\Theta).
\end{equation}
This is the sense in which the surrogate is \emph{joint}:
\eqref{eq:app_global_surrogate} holds for all variables $\Theta$ simultaneously.

\subsection{Explicit scalar subproblems and exact block minimizers of \texorpdfstring{$G(\cdot\mid\widetilde\Theta)$}{G(.|Theta-tilde)}}
\label{app:jmm_scalar}

We make explicit the scalar functions $g_{i_nr}$ whose minimizers yield the inner multiplicative updates.
This clarifies an important nuance: $G(\Theta\mid\widetilde\Theta)$ is a \emph{single} joint surrogate valid for all
variables, but it is not jointly separable across all blocks at once. Instead, it is \emph{blockwise} entrywise
separable: when all blocks except one are fixed, $G(\cdot\mid\widetilde\Theta)$ decomposes as a sum of independent
one-dimensional convex functions over the entries of the active block.

\paragraph{Reference tensors and transforms.}
Fix a reference $\widetilde\Theta$ and denote $\widetilde{\Xhat}=\widehat\X(\widetilde\Theta)$.
Define the reference-powered tensors
\[
\widetilde{\mathcal P}:=\X\odot \widetilde{\Xhat}^{\beta-2},
\qquad
\widetilde{\mathcal Q}:=\widetilde{\Xhat}^{\beta-1}.
\]
For a nonnegative variable $Z$ with reference $\widetilde Z$, define the entrywise transforms
\[
\chi_{1,\beta}(Z,\widetilde Z):=\widetilde Z^{\,2-\beta}\odot Z^{\,\beta-1}
=\widetilde Z\odot \Big(\frac{Z}{\widetilde Z}\Big)^{\beta-1},
\]
\[
\chi_{2,\beta}(Z,\widetilde Z):=
\begin{cases}
Z, & 0\le \beta<1,\\[1mm]
Z^{\,\beta}\odot \widetilde Z^{-(\beta-1)}
=\widetilde Z\odot \Big(\frac{Z}{\widetilde Z}\Big)^{\beta}, & 1\le \beta<2.
\end{cases}
\]

\begin{lemma}[CP: explicit scalar form and unique block minimizer of the joint surrogate]
\label{lem:jmm_cp_scalar}
Consider the CP model $\widehat X_i(\Theta)=\sum_{r=1}^R \prod_{n=1}^N A^{(n)}_{i_n r}$.
Fix a reference $\widetilde\Theta=\{\widetilde A^{(n)}\}$ and the associated joint surrogate
$G(\Theta\mid\widetilde\Theta)$ defined by the entrywise bounds in
\eqref{eq:app_jensen_beta_ge_1} (for $1\le\beta<2$) and \eqref{eq:app_beta_lt_1_bound} (for $0\le\beta<1$),
summed over all indices $i$.
Fix an inner iterate $\Theta$ and update one factor matrix $A^{(n)}$ while keeping all other factors fixed.

Define the entrywise \emph{ratio variables}
\[
U^{(n)}:=A^{(n)}\oslash \widetilde A^{(n)}\qquad(\text{entrywise division}),\quad
u:=U^{(n)}_{i_n r}=\frac{A^{(n)}_{i_n r}}{\widetilde A^{(n)}_{i_n r}}.
\]
Then, with all other blocks fixed, the joint surrogate decomposes as
\[
G(\Theta\mid\widetilde\Theta)
=
\mathrm{const}
+
\sum_{i_n=1}^{I_n}\sum_{r=1}^R g_{i_n r}(u_{i_n r}),
\]
where each scalar function $g_{i_n r}$ is strictly convex on $(0,\infty)$ and can be written explicitly as:

\smallskip
\noindent
(i) if $1<\beta<2$:
\[
g_{i_n r}(u)
=
\frac{\mathrm{Den}^{(n)}_{\mathrm J}(i_n,r)}{\beta}\,u^{\beta}
-
\frac{\mathrm{Num}^{(n)}_{\mathrm J}(i_n,r)}{\beta-1}\,u^{\beta-1}
+\mathrm{const},
\]

\noindent
(ii) if $\beta=1$ (limit case):
\[
g_{i_n r}(u)
=
\mathrm{Den}^{(n)}_{\mathrm J}(i_n,r)\,u
-
\mathrm{Num}^{(n)}_{\mathrm J}(i_n,r)\,\log u
+\mathrm{const},
\]

\noindent
(iii) if $0\le\beta<1$:
\[
g_{i_n r}(u)
=
\mathrm{Den}^{(n)}_{\mathrm J}(i_n,r)\,u
+
\frac{\mathrm{Num}^{(n)}_{\mathrm J}(i_n,r)}{1-\beta}\,u^{\beta-1}
+\mathrm{const}.
\]

The coefficients are the contraction-only quantities
\[
\mathrm{Num}^{(n)}_{\mathrm J}(i_n,r)
:=
\sum_{i_{-n}}
\widetilde{\mathcal P}_{i}\,
\prod_{m\neq n}\chi_{1,\beta}\!\left(A^{(m)}_{i_m r},\widetilde A^{(m)}_{i_m r}\right),
\]
\[
\mathrm{Den}^{(n)}_{\mathrm J}(i_n,r)
:=
\sum_{i_{-n}}
\widetilde{\mathcal Q}_{i}\,
\prod_{m\neq n}\chi_{2,\beta}\!\left(A^{(m)}_{i_m r},\widetilde A^{(m)}_{i_m r}\right).
\]

Consequently, $g_{i_n r}$ has a unique minimizer on the $\varepsilon$-constrained feasible set
(equivalently, for $u\ge \varepsilon/\widetilde A^{(n)}_{i_n r}$); when $\mathrm{Num}^{(n)}_{\mathrm J}(i_n,r)=0$
the minimizer is attained at the lower bound, and is given by

\[
u^\star=
\begin{cases}
\displaystyle \frac{\mathrm{Num}^{(n)}_{\mathrm J}(i_n,r)}{\mathrm{Den}^{(n)}_{\mathrm J}(i_n,r)},
& 1\le \beta<2,\\[3mm]
\displaystyle \left(\frac{\mathrm{Num}^{(n)}_{\mathrm J}(i_n,r)}{\mathrm{Den}^{(n)}_{\mathrm J}(i_n,r)}\right)^{\!\frac{1}{2-\beta}},
& 0\le \beta<1,
\end{cases}
\]
and the corresponding minimizer in the original variables is exactly the inner multiplicative update
\[
A^{(n)}_{i_n r}\;\leftarrow\;\widetilde A^{(n)}_{i_n r}\,(u^\star)
=
\widetilde A^{(n)}_{i_n r}\,
\left(\frac{\mathrm{Num}^{(n)}_{\mathrm J}(i_n,r)}{\mathrm{Den}^{(n)}_{\mathrm J}(i_n,r)}\right)^{\gamma(\beta)}.
\]
In particular, each inner block update is the unique minimizer of $G(\cdot\mid\widetilde\Theta)$ with respect to that
block (holding all other blocks fixed).
\end{lemma}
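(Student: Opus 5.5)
Write the surrogate explicitly in the ratio variables and read off the coefficients. For CP, each contribution $z_{i,r}(\Theta)=\prod_m A^{(m)}_{i_m r}$ divided by $\widetilde\lambda_{i,r}=z_{i,r}(\widetilde\Theta)/\widetilde{\Xhat}_i$ equals
\[
\frac{z_{i,r}(\Theta)}{\widetilde\lambda_{i,r}}=\widetilde{\Xhat}_i\prod_{m=1}^N U^{(m)}_{i_m r},
\qquad U^{(m)}:=A^{(m)}\oslash\widetilde A^{(m)} .
\]
This identity is the key reduction: every surrogate term becomes a monomial in the ratios, multiplied by reference quantities.

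\textbf{Case $1\le\beta<2$.} Expand $d_\beta(x\mid y)=\frac{1}{\beta}y^\beta-\frac{x}{\beta-1}y^{\beta-1}+\mathrm{const}$ inside \eqref{eq:app_jensen_beta_ge_1}. The $y^\beta$ part gives
\[
\sum_{i,r}\widetilde\lambda_{i,r}\,\widetilde{\Xhat}_i^{\beta}\prod_m (U^{(m)}_{i_m r})^\beta/\beta
=\sum_{i,r} z_{i,r}(\widetilde\Theta)\,\widetilde{\Xhat}_i^{\beta-1}\prod_m (U^{(m)}_{i_m r})^\beta/\beta .
\]
Since $z_{i,r}(\widetilde\Theta)\prod_{m\ne n}(U^{(m)})^\beta=\widetilde A^{(n)}_{i_n r}\prod_{m\neq n}\chi_{2,\beta}(A^{(m)}_{i_m r},\widetilde A^{(m)}_{i_m r})$, the coefficient of $u^\beta/\beta$ is $\widetilde A^{(n)}_{i_n r}\mathrm{Den}^{(n)}_{\mathrm J}(i_n,r)$. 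The $y^{\beta-1}$ part works the same way: it uses $\widetilde{\Xhat}_i^{\beta-2}\X_i=\widetilde{\mathcal P}_i$ and $\widetilde A\,(A/\widetilde A)^{\beta-1}=\chi_{1,\beta}$, and yields coefficient $-\widetilde A^{(n)}_{i_n r}\mathrm{Num}^{(n)}_{\mathrm J}/(\beta-1)$.

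The common positive factor $\widetilde A^{(n)}_{i_n r}$ does not change the minimizer, so it is absorbed by rescaling $g_{i_nr}$. Terms not involving block $n$ go into the constant. Summing over $i_{-n}$ for fixed $(i_n,r)$ gives separability, because $u=U^{(n)}_{i_nr}$ appears only in the terms with that $(i_n,r)$. The case $\beta=1$ is handled directly: the $-x\log y$ term gives $-\mathrm{Num}\log u$ (with $\chi_{1,1}=\widetilde A$), and the linear term gives $\mathrm{Den}\,u$.

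\textbf{Case $0\le\beta<1$.} Use \eqref{eq:surrogate_beta_lt_1}. The Jensen-majorized convex term $\frac{x}{1-\beta}y^{\beta-1}$ produces $\frac{\mathrm{Num}}{1-\beta}u^{\beta-1}$ exactly as above. The tangent term $\widetilde{\Xhat}_i^{\beta-1}\widehat\X_i(\Theta)$ is linear in the actual $A^{(m)}$ and therefore produces $\sum\widetilde{\mathcal Q}_i\prod_{m\neq n}A^{(m)}_{i_mr}\cdot\widetilde A^{(n)}_{i_nr}u$. This explains why $\chi_{2,\beta}(Z,\widetilde Z)=Z$ in this regime.

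\textbf{Minimization.} First check the coefficients are positive: $\mathrm{Den}>0$ and $\mathrm{Num}\ge0$ (strictly positive unless $\X$ vanishes on the fiber) by Assumption~\ref{ass:eps}. Then each $g$ has $g''>0$ when $\mathrm{Num}>0$; for example, in case (i),
\[
g''(u)=(\beta-1)u^{\beta-3}\big(\mathrm{Den}\,u+(2-\beta)\mathrm{Num}\big)>0 .
\]
Setting $g'=0$ gives $u^\star=\mathrm{Num}/\mathrm{Den}$ for $\beta\ge1$ and $u^\star=(\mathrm{Num}/\mathrm{Den})^{1/(2-\beta)}$ for $\beta<1$. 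The constrained minimizer on $u\ge\eps/\widetilde A$ is then obtained by clipping. Clipping is exact because a one-dimensional convex function that is decreasing below $u^\star$ and increasing above it is minimized over a half-line at the projection of $u^\star$. When $\mathrm{Num}=0$, $g$ is increasing, so the minimizer sits at the bound.

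The main obstacle is the bookkeeping in the coefficient-matching step: verifying that the products of reference weights collapse exactly into the $\chi$ transforms, and keeping the $\widetilde A^{(n)}$ factor consistent. Everything else is routine one-dimensional calculus.
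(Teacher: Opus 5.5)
Your proposal is correct and follows the same route as the paper's proof: reparametrize by the ratios so that $z_{i,r}(\Theta)=\widetilde z_{i,r}\prod_m U^{(m)}_{i_m r}$, expand $d_\beta$, match coefficients with the $\chi_{1,\beta}$/$\chi_{2,\beta}$ contractions, and finish with one-dimensional calculus. You are in fact more careful than the paper in keeping the positive weight $\widetilde A^{(n)}_{i_n r}$ in front of each $g_{i_n r}$, which the paper drops when it calls the coefficients ``exactly'' $\mathrm{Num}^{(n)}_{\mathrm J}$ and $\mathrm{Den}^{(n)}_{\mathrm J}$; your only slip is harmless, since in case (i) the correct factorization is $g''(u)=u^{\beta-3}\bigl((\beta-1)\mathrm{Den}\,u+(2-\beta)\mathrm{Num}\bigr)$ rather than $(\beta-1)u^{\beta-3}(\cdots)$, and it is still positive.
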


\begin{proof}
We give the CP proof; the Tucker proof is identical in structure and is stated as a corollary below.

\paragraph{Step 1: reparametrize contributions by ratios.}
For CP, define for each entry $i$ and component $r$
\[
\widetilde z_{i,r}:=\prod_{m=1}^N \widetilde A^{(m)}_{i_m r},
\qquad
U^{(m)}_{i_m r}:=\frac{A^{(m)}_{i_m r}}{\widetilde A^{(m)}_{i_m r}},
\qquad
v_{i,r}:=\prod_{m=1}^N U^{(m)}_{i_m r}.
\]
Then $z_{i,r}(\Theta)=\prod_m A^{(m)}_{i_m r}=\widetilde z_{i,r}\,v_{i,r}$ and
$\widetilde\lambda_{i,r}=\widetilde z_{i,r}/\widetilde{\Xhat}_i$ as in \eqref{eq:app_lambda}.

\paragraph{Step 2: explicit entrywise surrogate in terms of $v_{i,r}$.}

For $1<\beta<2$, start from Jensen \eqref{eq:app_jensen_beta_ge_1}:
\[
G_i(\Theta\mid\widetilde\Theta)
=
\sum_{r=1}^R
\widetilde\lambda_{i,r}\,
d_\beta\!\Big(\X_i \,\Big|\, \frac{z_{i,r}(\Theta)}{\widetilde\lambda_{i,r}}\Big).
\]
Using the closed form of $d_\beta$ and discarding terms independent of $\Theta$, we obtain
\[
G_i(\Theta\mid\widetilde\Theta)
=
\sum_{r=1}^R
\left[
\frac{1}{\beta}\,
\widetilde\lambda_{i,r}^{\,1-\beta}\,z_{i,r}(\Theta)^{\beta}
-
\frac{\X_i}{\beta-1}\,
\widetilde\lambda_{i,r}^{\,2-\beta}\,z_{i,r}(\Theta)^{\beta-1}
\right]
+\mathrm{const}.
\]
Now substitute
\[
z_{i,r}(\Theta)=\widetilde z_{i,r}\,v_{i,r},
\qquad
\widetilde\lambda_{i,r}=\frac{\widetilde z_{i,r}}{\widetilde{\Xhat}_i}.
\]
Then
\[
\widetilde\lambda_{i,r}^{\,1-\beta}\,z_{i,r}(\Theta)^\beta
=
\left(\frac{\widetilde z_{i,r}}{\widetilde{\Xhat}_i}\right)^{1-\beta}
(\widetilde z_{i,r}v_{i,r})^\beta
=
\widetilde z_{i,r}\,\widetilde{\Xhat}_i^{\beta-1}\,v_{i,r}^{\beta}
=
\widetilde z_{i,r}\,\widetilde{\mathcal Q}_i\,v_{i,r}^{\beta},
\]
and
\[
\widetilde\lambda_{i,r}^{\,2-\beta}\,z_{i,r}(\Theta)^{\beta-1}
=
\left(\frac{\widetilde z_{i,r}}{\widetilde{\Xhat}_i}\right)^{2-\beta}
(\widetilde z_{i,r}v_{i,r})^{\beta-1}
=
\widetilde z_{i,r}\,\widetilde{\Xhat}_i^{\beta-2}\,v_{i,r}^{\beta-1}.
\]
Therefore,
\[
\X_i\,\widetilde\lambda_{i,r}^{\,2-\beta}\,z_{i,r}(\Theta)^{\beta-1}
=
\widetilde z_{i,r}\,(\X_i\widetilde{\Xhat}_i^{\beta-2})\,v_{i,r}^{\beta-1}
=
\widetilde z_{i,r}\,\widetilde{\mathcal P}_i\,v_{i,r}^{\beta-1}.
\]
Hence, up to constants independent of $\Theta$,
\[
G_i(\Theta\mid\widetilde\Theta)
=
\sum_{r=1}^R
\left[
\frac{\widetilde{\mathcal Q}_i}{\beta}\,\widetilde z_{i,r}\,v_{i,r}^{\beta}
-
\frac{\widetilde{\mathcal P}_i}{\beta-1}\,\widetilde z_{i,r}\,v_{i,r}^{\beta-1}
\right]
+\mathrm{const}.
\]

The limit case $\beta=1$ follows by continuity and gives a term in $v_{i,r}$ and $\log v_{i,r}$.
For $0\le\beta<1$, the convex-concave construction \eqref{eq:app_beta_lt_1_bound} yields the same structural outcome:
up to constants independent of $\Theta$,
\[
G_i(\Theta\mid\widetilde\Theta)
=
\sum_{r=1}^R
\left[
\widetilde{\mathcal Q}_i\,\widetilde z_{i,r}\,v_{i,r}
+
\frac{\widetilde{\mathcal P}_i}{1-\beta}\,\widetilde z_{i,r}\,v_{i,r}^{\beta-1}
\right]
+\mathrm{const}.
\]
(These are the classical joint-MM scalar forms; see also the discussion in the matrix case.)

\paragraph{Step 3: isolate one block and obtain entrywise separability.}
Fix a mode $n$ and hold all factors $A^{(m)}$ for $m\neq n$ fixed. Then for each $(i_n,r)$,
\[
v_{i,r} = U^{(n)}_{i_n r}\cdot c_{i_{-n},r},
\qquad
c_{i_{-n},r}:=\prod_{m\neq n} U^{(m)}_{i_m r}
\quad\text{(constant during the update of $A^{(n)}$).}
\]
Also $\widetilde z_{i,r}=\widetilde A^{(n)}_{i_n r}\,\widetilde s^{(n)}_r(i_{-n})$ with
$\widetilde s^{(n)}_r(i_{-n})=\prod_{m\neq n}\widetilde A^{(m)}_{i_m r}$.
Hence, for fixed $(i_n,r)$, all dependence on $A^{(n)}_{i_n r}$ is through
$u:=U^{(n)}_{i_n r}=A^{(n)}_{i_n r}/\widetilde A^{(n)}_{i_n r}$, and summing over all $i$ yields a decomposition
\[
G(\Theta\mid\widetilde\Theta)
=\mathrm{const}+\sum_{i_n,r} g_{i_n r}(u_{i_n r}),
\]
with $g_{i_n r}$ obtained by collecting the terms involving this $u$.

Using
\[
\chi_{1,\beta}(A^{(m)}_{i_m r},\widetilde A^{(m)}_{i_m r})
=
\widetilde A^{(m)}_{i_m r}\,(U^{(m)}_{i_m r})^{\beta-1},
\qquad
\chi_{2,\beta}(A^{(m)}_{i_m r},\widetilde A^{(m)}_{i_m r})
=
\widetilde A^{(m)}_{i_m r}\,(U^{(m)}_{i_m r})^{\beta}
\ \ (1\le\beta<2),
\]
we see that the coefficients multiplying $u^\beta$ and $u^{\beta-1}$ (or $u$ and $u^{\beta-1}$ when $\beta<1$)
are exactly the contraction sums $\mathrm{Den}^{(n)}_{\mathrm J}(i_n,r)$ and
$\mathrm{Num}^{(n)}_{\mathrm J}(i_n,r)$ stated in the lemma.
This yields the explicit scalar forms for $g_{i_n r}$.

\paragraph{Step 4: strict convexity and closed-form minimizer.}
For $1<\beta<2$, $g_{i_n r}(u)$ has the form
$\frac{\mathrm{Den}}{\beta}u^\beta - \frac{\mathrm{Num}}{\beta-1}u^{\beta-1}+\mathrm{const}$
with $\mathrm{Den}>0$ and $\mathrm{Num}\ge 0$, hence
\[
g''(u)=\mathrm{Den}(\beta-1)u^{\beta-2}+\mathrm{Num}(2-\beta)u^{\beta-3}>0,\quad u>0,
\]

so $g$ is strictly convex. If $\mathrm{Num}^{(n)}_{\mathrm J}(i_n,r)>0$, it has a unique minimizer in $(0,\infty)$,
and solving $g'(u)=0$ gives $u^\star=\mathrm{Num}/\mathrm{Den}$.

The cases $\beta=1$ and $0\le\beta<1$ follow similarly (by limit or direct differentiation), yielding the stated
minimizers and the multiplicative update in $A^{(n)}$.

If $\mathrm{Num}^{(n)}_{\mathrm J}(i_n,r)=0$, then the unconstrained minimizer corresponds to $u^\star=0$;
on the $\varepsilon$-constrained set the unique minimizer is attained at the lower bound
$u=\varepsilon/\widetilde A^{(n)}_{i_n r}$, i.e., $A^{(n)}_{i_n r}=\varepsilon$.

\end{proof}

\begin{corollary}[Tucker: explicit scalar forms for core and factor updates]
\label{cor:jmm_tucker_scalar}
Fix a Tucker reference $\widetilde\Theta=\{\widetilde G,\widetilde A^{(1)},\dots,\widetilde A^{(N)}\}$ and the joint
surrogate $G(\Theta\mid\widetilde\Theta)$.
When updating one block (either a factor $A^{(n)}$ or the core $G$) with all other blocks fixed, the surrogate
decomposes entrywise over the active block into strictly convex scalar functions of the ratio variables
$A^{(n)}\oslash \widetilde A^{(n)}$ or $G\oslash\widetilde G$.
The scalar functions have exactly the same forms as in Lemma~\ref{lem:jmm_cp_scalar}, with coefficients given by the
explicit contraction formulas \eqref{eq:jmm_core_num}-\eqref{eq:jmm_core_den} (core) and
\eqref{eq:jmm_factor_num}-\eqref{eq:jmm_factor_den} (factors).
In particular, each inner Tucker core/factor update is the unique minimizer of $G(\cdot\mid\widetilde\Theta)$ with
respect to that block (holding all other blocks fixed).
\end{corollary}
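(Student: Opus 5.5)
The plan is to reduce the Tucker case to the template of Lemma~\ref{lem:jmm_cp_scalar}, replacing the CP component index $r$ by the Tucker multi-index $j=(j_1,\dots,j_N)$. Steps~1--2 of that proof are model-agnostic: they use only the representation $\widehat\X_i(\Theta)=\sum_\rho z_{i,\rho}(\Theta)$ with weights $\widetilde\lambda_{i,\rho}=\widetilde z_{i,\rho}/\widetilde\Xhat_i$. So I will first define the ratio variables $U^{(m)}:=A^{(m)}\oslash\widetilde A^{(m)}$ and $V:=G\oslash\widetilde G$. With these, $z_{i,j}(\Theta)=\widetilde z_{i,j}\,v_{i,j}$, where $\widetilde z_{i,j}=\widetilde G_{j}\prod_m\widetilde A^{(m)}_{i_mj_m}$ and $v_{i,j}=V_{j}\prod_m U^{(m)}_{i_mj_m}$. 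Substituting exactly as in Step~2 then gives, up to constants,
\[
G(\Theta\mid\widetilde\Theta)=\sum_i\sum_j \widetilde z_{i,j}\Big[c_1\,\widetilde{\mathcal Q}_i\,v_{i,j}^{p}+c_2\,\widetilde{\mathcal P}_i\,v_{i,j}^{\beta-1}\Big]+\mathrm{const}.
\]
Here $p=\beta$ for $1\le\beta<2$ and $p=1$ for $0\le\beta<1$. The constants are $(c_1,c_2)=(1/\beta,-1/(\beta-1))$ in the first regime and $(1,1/(1-\beta))$ in the second, with the $\log$ limit at $\beta=1$.

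\textbf{Core block.} Fix all factors. Then $v_{i,j}=V_j\,c_{i,j}$ with $c_{i,j}=\prod_mU^{(m)}_{i_mj_m}$ fixed, and each core entry $V_j$ appears only in the terms indexed by that $j$. So the surrogate splits as $\sum_j g_j(V_j)$ with $g_j(u)=\mathrm{Den}_j\,u^{p}/\ldots+\mathrm{Num}_j\,u^{\beta-1}/\ldots$. The coefficients come out as
\[
\mathrm{Den}_j=\widetilde G_j\sum_i\widetilde{\mathcal Q}_i\prod_m\widetilde A^{(m)}_{i_mj_m}(U^{(m)}_{i_mj_m})^{p}=\widetilde G_j\sum_i\widetilde{\mathcal Q}_i\prod_m\chi_{2,\beta}(A^{(m)},\widetilde A^{(m)})_{i_mj_m},
\]
and similarly $\mathrm{Num}_j$ with $\widetilde{\mathcal P}$, exponent $\beta-1$, and $\chi_{1,\beta}$. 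These are, up to the common factor $\widetilde G_j$, the $n$-mode contractions \eqref{eq:jmm_core_num}--\eqref{eq:jmm_core_den}. The factor $\widetilde G_j$ cancels in the ratio, or can be absorbed into the definitions. I will check this against the appendix formulas, since this normalization is where a mismatch could occur.

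\textbf{Factor block $A^{(n)}$.} Fix the core and the other factors. Now $v_{i,j}=U^{(n)}_{i_nj_n}\cdot V_j\prod_{m\ne n}U^{(m)}_{i_mj_m}$. For a fixed entry $(i_n,j_n)$, collect all terms with that $i_n$ and $j_n$, summing over $i_{-n}$ and $j_{-n}$. This gives a scalar $g_{i_nj_n}(u)$ of the same form, with
\[
\mathrm{Den}=\widetilde A^{(n)}_{i_nj_n}\sum_{i_{-n},j_{-n}}\widetilde{\mathcal Q}_i\,\chi_{2,\beta}(G,\widetilde G)_j\prod_{m\ne n}\chi_{2,\beta}(A^{(m)},\widetilde A^{(m)})_{i_mj_m}.
\]
$\mathrm{Num}$ is the same with $\widetilde{\mathcal P}$ and $\chi_{1,\beta}$, matching \eqref{eq:jmm_factor_num}--\eqref{eq:jmm_factor_den}. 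The key identities are $\widetilde Z\,(Z/\widetilde Z)^{p}=\chi_{2,\beta}$ and $\widetilde Z\,(Z/\widetilde Z)^{\beta-1}=\chi_{1,\beta}$, applied now to the core as well.

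Once the scalar forms are established, Step~4 of Lemma~\ref{lem:jmm_cp_scalar} applies verbatim. Positivity of $\mathrm{Den}$ comes from Assumption~\ref{ass:eps}, and $\mathrm{Num}\ge 0$. This gives strict convexity, the stationary point $u^\star=(\mathrm{Num}/\mathrm{Den})^{\gamma(\beta)}$, and the clipping at $\varepsilon/\widetilde Z$ (when $\mathrm{Num}=0$ or the candidate falls below the bound), hence uniqueness of the blockwise minimizer.

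The main obstacle is the bookkeeping in the factor case, where the sum runs over the full multi-index $j$ while only $j_n$ is free. I will need to verify that no cross terms couple different entries of the active block. This holds because each term $(i,j)$ involves exactly one entry $A^{(n)}_{i_nj_n}$ and exactly one core entry $G_j$.
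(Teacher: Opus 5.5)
Your proposal is correct and follows the paper's route. The paper proves this corollary only by noting that the CP argument of Lemma~\ref{lem:jmm_cp_scalar} carries over with the component index $r$ replaced by the Tucker multi-index $j$. You carry out exactly that substitution explicitly, and you correctly keep the positive prefactors $\widetilde G_j$ and $\widetilde A^{(n)}_{i_nj_n}$, which the paper silently drops from its scalar forms and which cancel in $\mathrm{Num}/\mathrm{Den}$.

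One small caveat, which you inherit from the statement itself: for $\beta\in[0,1]$ the scalar function is merely linear when $\mathrm{Num}=0$ (which happens for a zero slice of $\mathcal{X}$), so strict convexity holds only when $\mathrm{Num}>0$. Uniqueness of the minimizer on the $\varepsilon$-constrained set still holds because $\mathrm{Den}>0$, and your clipping argument already covers this case.
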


\subsection{Inner decrease and monotonicity of the objective}\label{subsec:Inner_Decrease_and_monot}

Each inner update decreases the fixed surrogate $G(\cdot\mid\widetilde\Theta)$ with respect to the selected block,
which establishes the decrease property used in Proposition~\ref{prop:inner_decrease}.

Consequently, after $L$ inner updates we have
\[
G(\Theta^{(L)}\mid\widetilde\Theta)\le G(\Theta^{(0)}\mid\widetilde\Theta)
=G(\widetilde\Theta\mid\widetilde\Theta).
\]
Using the majorization property $F(\Theta)\le G(\Theta\mid\widetilde\Theta)$ then yields the outer-iteration decrease
\[
F(\Theta^{(L)}) \le G(\Theta^{(L)}\mid\widetilde\Theta)\le G(\widetilde\Theta\mid\widetilde\Theta)=F(\widetilde\Theta),
\]
which is exactly the statement of Theorem~\ref{thm:jmm_monotone}.

\section{Explicit Tucker quantities}
\label{app:tucker_explicit}

This appendix expands all Tucker quantities used in the paper, with explicit indices.
We give formulas for both the block-MM scheme (Section~\ref{sec:block_mm}) and the joint-MM scheme (Section~\ref{sec:jmm_multilinear}).
These expansions are intended to be directly implementable.

\subsection{Tucker model and basic notation}
Let $\X\in\R_+^{I_1\times\cdots\times I_N}$.
Let $\G\in\R_+^{J_1\times\cdots\times J_N}$ and $A^{(n)}\in\R_+^{I_n\times J_n}$.
The Tucker reconstruction is
\begin{equation}
\label{eq:tucker_entry}
\Xhat_{i_1\dots i_N}
=
\sum_{j_1=1}^{J_1}\cdots\sum_{j_N=1}^{J_N}
\G_{j_1\dots j_N}\prod_{n=1}^N A^{(n)}_{i_n j_n}.
\end{equation}

Throughout, $i=(i_1,\dots,i_N)$ and $j=(j_1,\dots,j_N)$.

\subsection{Block-MM quantities for Tucker}
Given a current reconstruction $\Xhat$, define (entrywise)
\[
\mathcal{P}_i := \X_i\,\Xhat_i^{\beta-2},
\qquad
\mathcal{Q}_i := \Xhat_i^{\beta-1}.
\]

\subsubsection{Core update, explicit formula}
The block-MM core update uses
\begin{equation}
\label{eq:block_core_num_den}
\mathcal{P}_{\mathrm{core}}(j_1,\dots,j_N)
=
\sum_{i_1=1}^{I_1}\cdots\sum_{i_N=1}^{I_N}
\mathcal{P}_{i_1\dots i_N}\prod_{n=1}^N A^{(n)}_{i_n j_n},
\end{equation}
\begin{equation}
\label{eq:block_core_den}
\mathcal{Q}_{\mathrm{core}}(j_1,\dots,j_N)
=
\sum_{i_1=1}^{I_1}\cdots\sum_{i_N=1}^{I_N}
\mathcal{Q}_{i_1\dots i_N}\prod_{n=1}^N A^{(n)}_{i_n j_n}.
\end{equation}
Then
\[
\G \leftarrow \G \od \left(\frac{\mathcal{P}_{\mathrm{core}}}{\mathcal{Q}_{\mathrm{core}}}\right)^{\gamma(\beta)}.
\]

\subsubsection{Factor update, explicit formula}
Fix a mode $n$.
Define the partial contraction tensor
\begin{equation}
\label{eq:block_Bn}
\mathcal{B}^{(n)}_{j_n,\,i_{-n}}
:=
\sum_{j_1=1}^{J_1}\cdots\sum_{j_{n-1}=1}^{J_{n-1}}
\sum_{j_{n+1}=1}^{J_{n+1}}\cdots\sum_{j_N=1}^{J_N}
\G_{j_1\dots j_N}\prod_{m\neq n} A^{(m)}_{i_m j_m},
\end{equation}
where $i_{-n}=(i_1,\dots,i_{n-1},i_{n+1},\dots,i_N)$.
Then the numerator and denominator matrices for the factor update are
\begin{equation}
\label{eq:block_factor_num}
\mathrm{Num}^{(n)}_{i_n j_n}
=
\sum_{i_{-n}} \mathcal{P}_{i_1\dots i_N}\,\mathcal{B}^{(n)}_{j_n,\,i_{-n}},
\end{equation}
\begin{equation}
\label{eq:block_factor_den}
\mathrm{Den}^{(n)}_{i_n j_n}
=
\sum_{i_{-n}} \mathcal{Q}_{i_1\dots i_N}\,\mathcal{B}^{(n)}_{j_n,\,i_{-n}}.
\end{equation}
The block-MM factor update is
\[
A^{(n)} \leftarrow A^{(n)} \od \left(\frac{\mathrm{Num}^{(n)}}{\mathrm{Den}^{(n)}}\right)^{\gamma(\beta)}.
\]

\subsection{Joint-MM quantities for Tucker}
We now give fully explicit formulas for the inner updates in the joint-MM scheme.

\subsubsection{Reference tensors}
At an outer iteration, fix a reference point
\[
\widetilde{\Theta}=\{\widetilde{\G},\widetilde A^{(1)},\dots,\widetilde A^{(N)}\},
\qquad
\widetilde{\Xhat}=\widehat{\X}(\widetilde{\Theta}).
\]
Define the reference-powered tensors
\begin{equation}
\label{eq:ref_PQ}
\widetilde{\mathcal{P}}_i := \X_i\,\widetilde{\Xhat}_i^{\beta-2},
\qquad
\widetilde{\mathcal{Q}}_i := \widetilde{\Xhat}_i^{\beta-1}.
\end{equation}

\subsubsection{Transforms}
During the inner loop, we use the transforms (applied entrywise)
\[
\chi_{1,\beta}(Z,\widetilde Z)= \widetilde Z^{\,2-\beta}\odot Z^{\,\beta-1},
\qquad
\chi_{2,\beta}(Z,\widetilde Z)=
\begin{cases}
Z, & \beta<1,\\
Z^{\,\beta}\odot \widetilde Z^{-(\beta-1)}, & 1\le \beta<2.
\end{cases}
\]

Define the transformed core and factors:
\[
\G_{(1)} := \chi_{1,\beta}(\G,\widetilde{\G}),\quad
\G_{(2)} := \chi_{2,\beta}(\G,\widetilde{\G}),\quad
A^{(n)}_{(1)} := \chi_{1,\beta}(A^{(n)},\widetilde A^{(n)}),\quad
A^{(n)}_{(2)} := \chi_{2,\beta}(A^{(n)},\widetilde A^{(n)}).
\]

\subsubsection{Joint-MM core update, explicit formula}
Define the joint numerator and denominator tensors:
\begin{equation}
\label{eq:jmm_core_num}
\widetilde{\mathcal{P}}_{\mathrm{core,J}}(j_1,\dots,j_N)
=
\sum_{i_1=1}^{I_1}\cdots\sum_{i_N=1}^{I_N}
\widetilde{\mathcal{P}}_{i_1\dots i_N}
\prod_{n=1}^N A^{(n)}_{(1)}(i_n,j_n),
\end{equation}
\begin{equation}
\label{eq:jmm_core_den}
\widetilde{\mathcal{Q}}_{\mathrm{core,J}}(j_1,\dots,j_N)
=
\sum_{i_1=1}^{I_1}\cdots\sum_{i_N=1}^{I_N}
\widetilde{\mathcal{Q}}_{i_1\dots i_N}
\prod_{n=1}^N A^{(n)}_{(2)}(i_n,j_n).
\end{equation}
Then the inner joint-MM core update is
\[
\G \leftarrow \widetilde{\G} \od
\left(
\frac{\widetilde{\mathcal{P}}_{\mathrm{core,J}}}{\widetilde{\mathcal{Q}}_{\mathrm{core,J}}}
\right)^{\gamma(\beta)}.
\]

\subsubsection{Joint-MM factor update, explicit formula}
Fix a mode $n$.
Define the joint numerator and denominator matrices
$\mathrm{Num}^{(n)}_{\mathrm{J}}\in\R^{I_n\times J_n}$ and
$\mathrm{Den}^{(n)}_{\mathrm{J}}\in\R^{I_n\times J_n}$ by
\begin{equation}
\label{eq:jmm_factor_num}
\mathrm{Num}^{(n)}_{\mathrm{J}}(i_n,j_n)
=
\sum_{i_{-n}}
\sum_{j_1=1}^{J_1}\cdots\sum_{j_{n-1}=1}^{J_{n-1}}
\sum_{j_{n+1}=1}^{J_{n+1}}\cdots\sum_{j_N=1}^{J_N}
\widetilde{\mathcal{P}}_{i_1\dots i_N}\,
\G_{(1)}(j_1,\dots,j_N)
\prod_{m\neq n} A^{(m)}_{(1)}(i_m,j_m),
\end{equation}
\begin{equation}
\label{eq:jmm_factor_den}
\mathrm{Den}^{(n)}_{\mathrm{J}}(i_n,j_n)
=
\sum_{i_{-n}}
\sum_{j_1=1}^{J_1}\cdots\sum_{j_{n-1}=1}^{J_{n-1}}
\sum_{j_{n+1}=1}^{J_{n+1}}\cdots\sum_{j_N=1}^{J_N}
\widetilde{\mathcal{Q}}_{i_1\dots i_N}\,
\G_{(2)}(j_1,\dots,j_N)
\prod_{m\neq n} A^{(m)}_{(2)}(i_m,j_m).
\end{equation}
The inner joint-MM factor update is
\[
A^{(n)} \leftarrow \widetilde A^{(n)} \od
\left(
\frac{\mathrm{Num}^{(n)}_{\mathrm{J}}}{\mathrm{Den}^{(n)}_{\mathrm{J}}}
\right)^{\gamma(\beta)}.
\]

\begin{remark}
Equations \eqref{eq:jmm_factor_num} and \eqref{eq:jmm_factor_den} show that the factor update can be computed
directly by contracting $\widetilde{\mathcal{P}}$ or $\widetilde{\mathcal{Q}}$ with the transformed core and all
transformed factors except $A^{(n)}$.
This can be implemented by one einsum call per numerator and denominator, as shown in Appendix~\ref{app:einsum}.
\end{remark}

\section{Proofs for the KL-based convergence analysis of J-CoMM}
\label{app:jcomm_kl_proofs}

\subsection{Proof of Lemma~\ref{lem:jcomm_uniform_curvature}: uniform curvature of the scalar J-CoMM surrogates}
\label{app:proof_jcomm_uniform_curvature}

\begin{proof}
The explicit scalar forms follow from Lemma~\ref{lem:jmm_cp_scalar} for CP and from
Corollary~\ref{cor:jmm_tucker_scalar} for Tucker.

We first bound the admissible ratio variable.
By Assumption~\ref{ass:jcomm_conv}, every admissible scalar variable and every corresponding reference value satisfy
\[
\eps \le Z \le M,
\qquad
\eps \le \widetilde Z \le M.
\]
Hence
\[
\frac{\eps}{M}\le \frac{Z}{\widetilde Z}\le \frac{M}{\eps}.
\]
Therefore, for every scalar block update,
\[
u\in[\underline u,\overline u]
\qquad\text{with}\qquad
\underline u:=\frac{\eps}{M},
\quad
\overline u:=\frac{M}{\eps}.
\]

We next show that the contraction coefficients $\mathrm{Num}$ and $\mathrm{Den}$ are uniformly bounded above and below
by positive constants.
Fix one outer iterate $\widetilde\Theta\in\mathcal C$.
Since all blocks are entrywise bounded between $\eps$ and $M$, every model entry
$\widehat{\X}_i(\Theta)$ is a continuous positive function of $\Theta$ on the compact set $\mathcal C$.
Hence there exist constants
\[
0<\underline Xhat \le \widehat{\X}_i(\Theta)\le \overline Xhat<\infty
\qquad\text{for all } i \text{ and all } \Theta\in\mathcal C.
\]
In particular, for the reference reconstruction $\widetilde{\Xhat}=\widehat{\X}(\widetilde\Theta)$,
the reference-powered tensors
\[
\widetilde{\mathcal P}_i=\X_i\,\widetilde{\Xhat}_i^{\beta-2},
\qquad
\widetilde{\mathcal Q}_i=\widetilde{\Xhat}_i^{\beta-1}
\]
are continuous and strictly positive on $\mathcal C$.
Using Assumption~\ref{ass:jcomm_conv}(iv), we obtain positive bounds
\[
0<\underline P \le \widetilde{\mathcal P}_i \le \overline P <\infty,
\qquad
0<\underline Q \le \widetilde{\mathcal Q}_i \le \overline Q <\infty.
\]

Likewise, each transformed factor/core entry
\[
\chi_{1,\beta}(Z,\widetilde Z)=\widetilde Z^{\,2-\beta} Z^{\,\beta-1},
\qquad
\chi_{2,\beta}(Z,\widetilde Z)=
\begin{cases}
Z, & \beta<1,\\
Z^\beta \widetilde Z^{1-\beta}, & 1\le\beta<2,
\end{cases}
\]
is a continuous positive function on the compact box $[\eps,M]\times[\eps,M]$.
Therefore there exist constants
\[
0<\underline \chi_1 \le \chi_{1,\beta}(Z,\widetilde Z)\le \overline \chi_1<\infty,
\qquad
0<\underline \chi_2 \le \chi_{2,\beta}(Z,\widetilde Z)\le \overline \chi_2<\infty
\]
uniformly over all admissible pairs $(Z,\widetilde Z)$.

Now, for both CP and Tucker, the coefficients $\mathrm{Num}$ and $\mathrm{Den}$ are finite sums of products of
the form
\[
\widetilde{\mathcal P}_i \times \prod \chi_{1,\beta}(\cdot,\widetilde{\cdot}),
\qquad
\widetilde{\mathcal Q}_i \times \prod \chi_{2,\beta}(\cdot,\widetilde{\cdot}),
\]
respectively.
Each summand is continuous and strictly positive on the compact feasible set.
Since the number of summands is finite, it follows that there exist positive constants
\[
0<\underline N\le \mathrm{Num}\le \overline N<\infty,
\qquad
0<\underline D\le \mathrm{Den}\le \overline D<\infty,
\]
uniformly for all admissible scalar block subproblems.

We now lower-bound the second derivative of the scalar surrogate.

For $1<\beta<2$, we have
\[
g''(u)
=
\mathrm{Den}(\beta-1)u^{\beta-2}
+
\mathrm{Num}(2-\beta)u^{\beta-3}.
\]
Since $\beta-2<0$ and $\beta-3<0$, the functions $u^{\beta-2}$ and $u^{\beta-3}$ are decreasing on
$(0,\infty)$.
Hence, for all $u\in[\underline u,\overline u]$,
\[
g''(u)
\ge
\underline D(\beta-1)\overline u^{\,\beta-2}
+
\underline N(2-\beta)\overline u^{\,\beta-3}
=: \mu_\beta >0.
\]

For $\beta=1$, we have
\[
g''(u)=\frac{\mathrm{Num}}{u^2}.
\]
Therefore,
\[
g''(u)\ge \frac{\underline N}{\overline u^2}=: \mu_1>0.
\]

For $0\le\beta<1$, we have
\[
g''(u)
=
\mathrm{Num}(2-\beta)u^{\beta-3},
\]
and again $\beta-3<0$, so
\[
g''(u)\ge \underline N(2-\beta)\overline u^{\,\beta-3}=: \mu_\beta>0.
\]

Thus, in every case, there exists a constant $\mu>0$ such that
\[
g''(u)\ge \mu
\qquad \text{for all admissible }u.
\]
Hence each scalar surrogate is $\mu$-strongly convex on $[\underline u,\overline u]$.
Strong convexity on a closed interval implies uniqueness of the minimizer $u^\star$ and yields
\[
g(u)-g(u^\star)\ge \frac{\mu}{2}|u-u^\star|^2,
\]
which concludes the proof.
\end{proof}

\subsection{Proof of Lemma~\ref{lem:jcomm_sufficient_decrease}: sufficient decrease for one outer J-CoMM step}
\label{app:proof_jcomm_sufficient_decrease}
\begin{proof}
Fix one outer iteration $k$ and let the reference point be
\[
\widetilde\Theta=\Theta^k.
\]
During this outer iteration, the surrogate
\[
G(\cdot\mid \Theta^k)
\]
is kept fixed, and the algorithm performs one cyclic sweep of exact block minimizations.

For each block index $b\in\{1,\dots,B\}$, the transition
\[
\Theta^{k,b-1}\longmapsto \Theta^{k,b}
\]
updates only block $b$, all other blocks being fixed.
By construction of J-CoMM with $L=1$, block $b$ has not been updated earlier in the current sweep.
Hence, immediately before its update, block $b$ is still equal to its reference value in $\Theta^k$.
Therefore, in the scalar ratio parametrization
\[
u=\frac{Z}{\widetilde Z},
\]
the pre-update value is
\[
u_{\mathrm{old}}=1.
\]

Now consider any scalar entry of the active block.
By Lemma~\ref{lem:jcomm_uniform_curvature}, the corresponding scalar surrogate
$g(u)$ is uniformly $\mu$-strongly convex on its admissible interval, where $\mu>0$ is independent of $k$ and of the
chosen scalar subproblem.
Let $u^\star$ denote its minimizer.
Since the J-CoMM block update is the exact minimizer of the scalar surrogate, the updated scalar value satisfies
\[
u_{\mathrm{new}}=u^\star.
\]
Strong convexity then yields
\[
g(1)-g(u^\star)\ge \frac{\mu}{2}|1-u^\star|^2.
\]

We now translate this estimate back to the original variable.
Since
\[
u^\star-1=\frac{Z^\star-\widetilde Z}{\widetilde Z},
\]
and since $\widetilde Z\le M$ by Assumption~\ref{ass:jcomm_conv}, we have
\[
|u^\star-1|
=
\frac{|Z^\star-\widetilde Z|}{\widetilde Z}
\ge
\frac{|Z^\star-\widetilde Z|}{M}.
\]
Therefore,
\[
g(1)-g(u^\star)
\ge
\frac{\mu}{2M^2}|Z^\star-\widetilde Z|^2.
\]

Summing this inequality over all scalar entries of the active block shows that the decrease of the fixed surrogate
during block update $b$ satisfies
\[
G(\Theta^{k,b-1}\mid \Theta^k)-G(\Theta^{k,b}\mid \Theta^k)
\ge
\frac{\mu}{2M^2}\,\|\Theta^{k,b}-\Theta^{k,b-1}\|^2.
\]
Define
\[
c_{\mathrm{dec}}:=\frac{\mu}{2M^2}>0.
\]
Summing over the $B$ block updates in the sweep yields
\[
G(\Theta^{k,0}\mid \Theta^k)-G(\Theta^{k,B}\mid \Theta^k)
\ge
c_{\mathrm{dec}}\sum_{b=1}^B \|\Theta^{k,b}-\Theta^{k,b-1}\|^2.
\]
Since $\Theta^{k,0}=\Theta^k$ and $\Theta^{k,B}=\Theta^{k+1}$, this becomes
\[
G(\Theta^k\mid \Theta^k)-G(\Theta^{k+1}\mid \Theta^k)
\ge
c_{\mathrm{dec}}\sum_{b=1}^B \|\Theta^{k,b}-\Theta^{k,b-1}\|^2.
\]

We now pass from the surrogate decrease to objective decrease.
By tightness of the surrogate at the reference iterate,
\[
F(\Theta^k)=G(\Theta^k\mid \Theta^k),
\]
and by majorization,
\[
F(\Theta^{k+1})\le G(\Theta^{k+1}\mid \Theta^k).
\]
Hence
\[
F(\Theta^k)-F(\Theta^{k+1})
\ge
G(\Theta^k\mid \Theta^k)-G(\Theta^{k+1}\mid \Theta^k)
\ge
c_{\mathrm{dec}}\sum_{b=1}^B \|\Theta^{k,b}-\Theta^{k,b-1}\|^2.
\]

Finally, the block increments have disjoint support in the product space of all variables, since at each stage
only one block is modified.
Therefore,
\[
\Theta^{k+1}-\Theta^k
=
\sum_{b=1}^B \bigl(\Theta^{k,b}-\Theta^{k,b-1}\bigr),
\]
and the orthogonality of block supports gives
\[
\|\Theta^{k+1}-\Theta^k\|^2
=
\sum_{b=1}^B \|\Theta^{k,b}-\Theta^{k,b-1}\|^2.
\]
Combining the previous estimates proves
\[
F(\Theta^k)-F(\Theta^{k+1})
\ge
c_{\mathrm{dec}}\|\Theta^{k+1}-\Theta^k\|^2.
\]

Since all iterates belong to the feasible set $\mathcal C$, we also have
\[
\Psi(\Theta^k)=F(\Theta^k),\qquad \Psi(\Theta^{k+1})=F(\Theta^{k+1}),
\]
so the same inequality holds for $\Psi$.
\end{proof}

\subsection{Proof of Lemma~\ref{lem:jcomm_relative_error}: relative-error bound}
\label{app:proof_jcomm_relative_error}

\begin{proof}
Fix one outer iteration $k$ and denote the reference point by
\[
\widetilde\Theta:=\Theta^k.
\]
As in Lemma~\ref{lem:jcomm_sufficient_decrease}, let
\[
\Theta^{k,0}:=\Theta^k,\qquad \Theta^{k,B}:=\Theta^{k+1},
\]
and, for each block index $b\in\{1,\dots,B\}$, let $\Theta^{k,b}$ denote the intermediate iterate obtained
after updating the first $b$ blocks of the fixed surrogate $G(\cdot\mid \Theta^k)$.

For each block $b$, the J-CoMM update computes an exact minimizer of the block subproblem
\[
\min_{Z_b\in \mathcal C_b}
\; G(\Theta^{k,b-1}_{<b},\, Z_b,\, \Theta^k_{>b}\mid \Theta^k),
\]
where $\Theta^{k,b-1}_{<b}$ denotes the already updated blocks and $\Theta^k_{>b}$ the blocks not yet updated
in the current sweep.
Since $\mathcal C_b$ is closed and convex and $G(\cdot\mid \Theta^k)$ is differentiable with respect to its first
argument, the first-order optimality condition gives
\[
0 \in \nabla_b G(\Theta^{k,b}\mid \Theta^k) + N_{\mathcal C_b}(\Theta_b^{k,b}),
\]
where $N_{\mathcal C_b}$ denotes the normal cone to $\mathcal C_b$.
Therefore, for each $b$, there exists a vector
\[
n_b^{k+1}\in N_{\mathcal C_b}(\Theta_b^{k+1})
\]
such that
\[
\nabla_b G(\Theta^{k,b}\mid \Theta^k) + n_b^{k+1}=0.
\]
Observe that this is well-defined because, once block $b$ has been updated, it is never modified again during the
same sweep, so
\[
\Theta_b^{k,b}=\Theta_b^{k+1}.
\]

Now define the block residual
\[
\xi_b^{k+1}:=\nabla_b F(\Theta^{k+1}) + n_b^{k+1}.
\]
Since $\mathcal C=\mathcal C_1\times\cdots\times \mathcal C_B$, the product normal-cone formula yields
\[
n^{k+1}:=(n_1^{k+1},\dots,n_B^{k+1})\in N_{\mathcal C}(\Theta^{k+1}),
\]
and therefore
\[
\xi^{k+1}:=(\xi_1^{k+1},\dots,\xi_B^{k+1})
=
\nabla F(\Theta^{k+1}) + n^{k+1}
\in \partial \Psi(\Theta^{k+1}).
\]
Hence
\[
\operatorname{dist}\bigl(0,\partial \Psi(\Theta^{k+1})\bigr)
\le \|\xi^{k+1}\|.
\]

It remains to bound $\|\xi^{k+1}\|$ by the step length.
Using the optimality relation above, we obtain for each block $b$
\[
\xi_b^{k+1}
=
\nabla_b F(\Theta^{k+1}) - \nabla_b G(\Theta^{k,b}\mid \Theta^k).
\]
Add and subtract $\nabla_b F(\Theta^k)$ and $\nabla_b G(\Theta^k\mid \Theta^k)$:
\[
\xi_b^{k+1}
=
\bigl(\nabla_b F(\Theta^{k+1})-\nabla_b F(\Theta^k)\bigr)
+
\bigl(\nabla_b F(\Theta^k)-\nabla_b G(\Theta^k\mid \Theta^k)\bigr)
+
\bigl(\nabla_b G(\Theta^k\mid \Theta^k)-\nabla_b G(\Theta^{k,b}\mid \Theta^k)\bigr).
\]

The middle term vanishes by Assumption~\ref{ass:jcomm_conv}(vii), namely the first-order consistency of the surrogate
at the reference point:
\[
\nabla_1 G(\Theta^k\mid \Theta^k)=\nabla F(\Theta^k).
\]
Thus
\[
\xi_b^{k+1}
=
\bigl(\nabla_b F(\Theta^{k+1})-\nabla_b F(\Theta^k)\bigr)
+
\bigl(\nabla_b G(\Theta^k\mid \Theta^k)-\nabla_b G(\Theta^{k,b}\mid \Theta^k)\bigr).
\]

Using the Lipschitz continuity of $\nabla F$ and of $\nabla_1 G(\cdot\mid \widetilde\Theta)$ on $\mathcal C$,
we obtain
\[
\|\xi_b^{k+1}\|
\le
L_F \|\Theta^{k+1}-\Theta^k\|
+
L_G \|\Theta^{k,b}-\Theta^k\|.
\]
Since $\Theta^{k,b}-\Theta^k$ contains only the first $b$ block increments of the sweep, while
$\Theta^{k+1}-\Theta^k$ contains all of them, the disjoint-support structure of block increments gives
\[
\|\Theta^{k,b}-\Theta^k\|
\le
\|\Theta^{k+1}-\Theta^k\|.
\]
Therefore
\[
\|\xi_b^{k+1}\|
\le
(L_F+L_G)\|\Theta^{k+1}-\Theta^k\|.
\]

Finally, summing over the $B$ blocks and using the product norm yields
\[
\|\xi^{k+1}\|^2
=
\sum_{b=1}^B \|\xi_b^{k+1}\|^2
\le
B(L_F+L_G)^2 \|\Theta^{k+1}-\Theta^k\|^2.
\]
Hence
\[
\|\xi^{k+1}\|
\le
\sqrt{B}\,(L_F+L_G)\,\|\Theta^{k+1}-\Theta^k\|.
\]
Since $\xi^{k+1}\in\partial \Psi(\Theta^{k+1})$, this proves
\[
\operatorname{dist}\bigl(0,\partial \Psi(\Theta^{k+1})\bigr)
\le
c_{\mathrm{err}}\,\|\Theta^{k+1}-\Theta^k\|,
\qquad
c_{\mathrm{err}}:=\sqrt{B}\,(L_F+L_G).
\]
\end{proof}

\subsection{Proof of Proposition~\ref{prop:jcomm_cluster}: asymptotic regularity and critical cluster points}
\label{app:proof_jcomm_cluster}
\begin{proof}
By Lemma~\ref{lem:jcomm_sufficient_decrease},
\[
\Psi(\Theta^k)-\Psi(\Theta^{k+1})
\ge
c_{\mathrm{dec}}\|\Theta^{k+1}-\Theta^k\|^2
\qquad\text{for all }k.
\]
Hence $\{\Psi(\Theta^k)\}$ is nonincreasing.
Since $\mathcal C$ is compact and $F$ is continuous on $\mathcal C$, the function
\[
\Psi=F+\iota_{\mathcal C}
\]
is bounded below on the generated sequence.
Therefore, $\{\Psi(\Theta^k)\}$ converges to some finite limit $\Psi_\infty$.

Summing the sufficient decrease inequality from $k=0$ to $K$ yields
\[
c_{\mathrm{dec}}\sum_{k=0}^K \|\Theta^{k+1}-\Theta^k\|^2
\le
\Psi(\Theta^0)-\Psi(\Theta^{K+1})
\le
\Psi(\Theta^0)-\Psi_\infty.
\]
Letting $K\to\infty$ gives
\[
\sum_{k=0}^\infty \|\Theta^{k+1}-\Theta^k\|^2 < \infty.
\]
In particular,
\[
\|\Theta^{k+1}-\Theta^k\|\to 0.
\]

Now let $\Theta^\star$ be any cluster point of $\{\Theta^k\}$.
By compactness of $\mathcal C$, there exists a subsequence $\Theta^{k_j}\to\Theta^\star$.
Since $\|\Theta^{k+1}-\Theta^k\|\to 0$, we also have
\[
\Theta^{k_j+1}\to \Theta^\star.
\]
Applying Lemma~\ref{lem:jcomm_relative_error}, we obtain
\[
\operatorname{dist}\bigl(0,\partial \Psi(\Theta^{k_j+1})\bigr)
\le
c_{\mathrm{err}}\|\Theta^{k_j+1}-\Theta^{k_j}\|\to 0.
\]
Thus there exist vectors $\xi^{k_j+1}\in \partial \Psi(\Theta^{k_j+1})$ such that
\[
\|\xi^{k_j+1}\|\to 0.
\]

Since $\mathcal C$ is closed and $F$ is continuous on $\mathcal C$, the function
$\Psi=F+\iota_{\mathcal C}$ is proper and lower semicontinuous.
Since $\Theta^{k_j+1}\to\Theta^\star$ and $\xi^{k_j+1}\to 0$, the closedness of the limiting subdifferential implies
\[
0\in \partial \Psi(\Theta^\star).
\]
Hence every cluster point is a critical point of $\Psi$.
\end{proof}

\section{Practical einsum recipes}
\label{app:einsum}

This appendix translates the main contractions into explicit einsum calls.
We write third-order examples because they are easiest to read.
Higher-order cases follow the same index logic.

\subsection{CP contractions (third-order)}
Let $\mathcal{T}\in\R^{I\times J\times K}$, $B^{(2)}\in\R^{J\times R}$, $B^{(3)}\in\R^{K\times R}$.
\begin{lstlisting}
# CPContr^(1)(T; B2, B3) in R^{I x R}
M = einsum('ijk,jr,kr->ir', T, B2, B3, optimize=True)
\end{lstlisting}
Similarly,
\begin{lstlisting}
# CPContr^(2)(T; B1, B3) in R^{J x R}
M = einsum('ijk,ir,kr->jr', T, B1, B3, optimize=True)

# CPContr^(3)(T; B1, B2) in R^{K x R}
M = einsum('ijk,ir,jr->kr', T, B1, B2, optimize=True)
\end{lstlisting}

\subsection{Tucker reconstruction and block-MM quantities (third-order)}
Let $\G\in\R^{A\times B\times C}$,
$A^{(1)}\in\R^{I\times A}$, $A^{(2)}\in\R^{J\times B}$, $A^{(3)}\in\R^{K\times C}$.
\begin{lstlisting}
# Reconstruction Xhat in R^{I x J x K}
Xhat = einsum('abc,ia,jb,kc->ijk', G, A1, A2, A3, optimize=True)

# Core contraction P_core in R^{A x B x C}
P_core = einsum('ijk,ia,jb,kc->abc', P, A1, A2, A3, optimize=True)

# Partial tensor B^(1) in R^{A x J x K}
B1 = einsum('abc,jb,kc->ajk', G, A2, A3, optimize=True)

# Factor numerator Num^(1) in R^{I x A}
Num1 = einsum('ijk,ajk->ia', P, B1, optimize=True)
\end{lstlisting}
One can avoid forming $B^{(1)}$ and compute $\mathrm{Num}^{(1)}$ directly:
\begin{lstlisting}
Num1 = einsum('ijk,abc,jb,kc->ia', P, G, A2, A3, optimize=True)
Den1 = einsum('ijk,abc,jb,kc->ia', Q, G, A2, A3, optimize=True)
\end{lstlisting}
This direct form is often preferable because it reduces memory traffic.

\subsection{Tucker joint-MM quantities (third-order)}
Assume reference-powered tensors $\widetilde P$ and $\widetilde Q$.
Assume transformed blocks G1, G2, A1\_1, A2\_1, A3\_1 and A1\_2, A2\_2, A3\_2.

\paragraph{Core update.} \text{   }
\begin{lstlisting}
Num_core = einsum('ijk,ia,jb,kc->abc', Ptilde, A1_1, A2_1, A3_1, optimize=True)
Den_core = einsum('ijk,ia,jb,kc->abc', Qtilde, A1_2, A2_2, A3_2, optimize=True)
G = Gtilde * (Num_core / Den_core)**gamma
\end{lstlisting}

\paragraph{Factor update for mode 1.}
This computes Num\_J\^(1) and Den\_J\^(1) in R\^\{I x A\}:
\begin{lstlisting}
Num1 = einsum('ijk,abc,jb,kc->ia', Ptilde, G1, A2_1, A3_1, optimize=True)
Den1 = einsum('ijk,abc,jb,kc->ia', Qtilde, G2, A2_2, A3_2, optimize=True)
A1 = A1tilde * (Num1 / Den1)**gamma
\end{lstlisting}
The same pattern holds for modes 2 and 3, by permuting indices accordingly.

\subsection{Practical remarks for implementation}
\begin{itemize}
\item Use \texttt{optimize=True} in einsum to let the backend choose a good contraction path.
\item Cache the reference tensors $\widetilde{\Xhat}$, $\widetilde{\mathcal{P}}$, and $\widetilde{\mathcal{Q}}$ for the whole inner loop.
\item For large tensors, the dominant cost is often memory traffic.
Direct einsum contractions that avoid materializing intermediate tensors can be faster.
\end{itemize}

\bibliographystyle{siamplain}
\bibliography{references}

\end{document}